\documentclass[reqno]{amsart} 
 
\usepackage{hyperref}
\usepackage{amsmath}
\usepackage{amssymb}
\usepackage{amsfonts}
\usepackage{amsthm}
\usepackage{latexsym}
\usepackage{esint}
\usepackage{mathtools}
\usepackage{mathrsfs}
\usepackage{graphicx}
\usepackage{wrapfig}
\usepackage{bbm}
\usepackage{color}
\usepackage{bm}
\usepackage[top=1in, bottom=1.25in, left=1.10in, right=1.10in]{geometry}
\usepackage{todonotes}
\usepackage{stackengine}
\allowdisplaybreaks

\newcommand\ocirc[1]{\ensurestackMath{\stackon[1pt]{#1}{\mkern2mu\circ}}}

\newtheorem{theorem}{Theorem}[section]

\newtheorem{proposition}[theorem]{Proposition}
\newtheorem{corollary}[theorem]{Corollary}

\theoremstyle{definition}
\newtheorem{definition}[theorem]{Definition}

\theoremstyle{remark}
\newtheorem{remark}[theorem]{Remark}

\numberwithin{equation}{section}


\newcommand{\bw}{\mathbf{w}}
\newcommand{\bx}{\mathbf{x}}
\newcommand{\by}{\mathbf{y}}

\newcommand{\bu}{\mathbf{u}}
\newcommand{\bfu}{\mathbf{u}}

\newcommand{\bff}{\mathbf{f}}

\newcommand{\bv}{\mathbf{v}}
\newcommand{\bn}{\mathbf{n}}

\newcommand{\dy}{\, \mathrm{d}\by}
\newcommand{\dd}{\,\mathrm{d}}

\newcommand{\dx}{\, \mathrm{d} \mathbf{x}}

\newcommand{\dt}{\, \mathrm{d}t}

\newcommand{\Div}{\mathrm{div}_{\mathbf{x}}}
\newcommand{\divx}{\mathrm{div}_{\mathbf{x}}}

\newcommand{\delx}{\Delta_{\mathbf{x}}}
\newcommand{\dely}{\Delta_{\mathbf{y}}}

\newcommand{\nabx}{\nabla_{\mathbf{x}}}
\newcommand{\naby}{\nabla_{\mathbf{y}}}

\newcommand{\Delx}{\Delta_{\mathbf{x}}}
\newcommand{\Dely}{\Delta_{\mathbf{y}}}

\newcommand{\bT}{\mathbb{T}}
\newcommand{\bD}{\mathbb{D}}
\newcommand{\bW}{\mathbb{W}}
\newcommand{\R}{\mathbb{R}}

\newcommand{\Oeta}{\Omega_{\eta}}
\newcommand{\Ozeta}{\Omega_{\zeta}}

\newcommand{\Testzeta}{{\mathscr{F}}^{\Div}_{\eta}}


\begin{document}


\title[Polymeric fluid-structure interaction]{Strong solution for polymeric fluid-structure interaction with small initial acceleration}

%
%
\author{Prince Romeo Mensah}
\address{Institut f\"ur Mathematik,
Technische Universit\"at Clausthal,
Erzstra{\ss}e 1,
38678 Clausthal-Zellerfeld, Germany
\\ and
\\
Faculty of Mathematics, University of Duisburg-Essen, Thea-Leymann-Stra{\ss}e 9, D-45127 Essen, Germany}
\email{prince.mensah@uni-due.de
  \\ orcid ID: 0000-0003-4086-2708
}

\subjclass[2020]{76D03; 74F10; 35Q30; 35Q84; 82D60}

\date{\today}


\keywords{Navier-Stokes equation, Oldroyd-B, Fluid-Structure interaction, Polymeric fluid, Maximal regularity}

\begin{abstract}  
 We consider the problem of a 3D-3D-2D mutually coupled solute-solvent-structure  three-states system. This describes the interaction of a flexible structure  with a polymeric fluid of classical Oldroyd-B type without centre-of-mass diffusion. We construct a unique higher-order regularity notion of a strong solution for the system by decoupling the solute from the solvent-structure subsystem, solving the decoupled system individually, and gluing the solutions through a fixed-point argument. As a requirement for the construction, we rely on  a maximal regularity result for the Stokes problem on moving domains with non-trivial boundary conditions; a result that is also of independent interest.
%
\end{abstract}

\maketitle

\section{Introduction}
A fluid-structure interaction problem usually involves the coupling of the Navier--Stokes equation or the Euler equation with a purely elastic or viscoelastic beam, shell or wave equation. The rigorous analysis of such problems has seen profound contributions in the last two decades \cite{breit2022regularity, BMSS, chambolle2005existence, coutand2006interaction, cheng2010interaction,  GH, GHL, lengeler2014weak, Leq, maity2020maximal, muha2013existence, muha2014existence, beirao2004existence}  and will generally involve the analysis of either distributional solutions or solutions that satisfy the aforementioned coupled system pointwise. Furthermore, earlier results concentrated on idealized settings with flat reference configurations that allows for easier analysis. The advancement in analytical techniques has generated a recent resurgence into the analysis of more realistic fluid-structure interaction problems with arbitrary reference configurations \cite{breit2022regularity, breit2024martingale, BMSS, lengeler2011globale, lengeler2014weak, muha2019existence}. To present a prototype for such a problem, one first consider a  deformed spacetime cylinder
\begin{align*}
I\times\Oeta:=\bigcup_{t\in I}\{t\}\times\Oeta \in \mathbb{R}^{1+3}
\end{align*}
where $I:=(0,T)$ is a fixed time horizon and $\Oeta:={\Omega_{\eta(t)}}$ is a time-dependent flexible spatial domain whose closure is obtained through the parametrization of the boundary $\omega\subset \mathbb{R}^2$ of some reference configuration $\Omega\subset \mathbb{R}^3$.    If the structure is viscoelastic and one wishes to incorporate any kind of force (e.g. conservative and non-conservative forces) in the solvent, a physically sound and analytically well-posed model for such problem is the following system of equations
\begin{equation}
\begin{aligned}
\divx \bu=0, \label{x1} 
\\
\partial_t \bu  + (\mathbf{u}\cdot \nabx)\mathbf{u} 
= 
\delx \bu -\nabx p
+\bff
+
\divx   \bT,  
\\
\partial_t^2 \eta - \partial_t\dely \eta + \dely^2 \eta = g - ( \mathbb{S}\bn_\eta )\circ \bm{\varphi}_\eta\cdot\bn \,\vert\det(\naby\bm{\varphi}_\eta)\vert,  
\end{aligned}
\end{equation}
defined in $I\times\Oeta\subset \mathbb R^{1+3}$ (with the last equation defined on $I\times\omega\subset \mathbb R^{1+2}$).  Here
\begin{align*}
&\det(\naby\bm{\varphi}_\eta):=\partial_{y_1} \bm{\varphi}_{\eta}  \times \partial_{y_2} \bm{\varphi}_{\eta},
\\&\mathbb{D}(\nabx \bu)=\tfrac{1}{2}\big(\nabx \bu+(\nabx \bu)^\top \big),
\\&
\mathbb{S}=  2\mathbb{D}(\nabx \bu)  -p\mathbb{I}+ \bT,
\end{align*} 
the vector $\bn_\eta$ is the normal at $\partial\Oeta$, $\mathbb{I}$ is the $3\times 3$ identity matrix and where all underlining dimensionless parameters have been set to one for simplicity. 
The system \eqref{x1} is complemented by the following initial and boundary conditions
\begin{equation}
\begin{aligned}\label{x2}
&\eta(0,\cdot)=\eta_0(\cdot), \qquad\partial_t\eta(0,\cdot)=\eta_\star(\cdot) & \text{in }\omega,  
\\  
&\bu(0,\cdot)=\bu_0(\cdot) & \text{in }\Omega_{\eta_0}.
\\ 
&\bu\circ\bm{\varphi}_\eta=(\partial_t\eta)\bn & \text{on }I\times \omega.
\end{aligned}
\end{equation}
The scalar functions
$g:(t, \by)\in I \times \omega \mapsto   \eta(t,\by)\in \mathbb{R}$, the vector field $\bff:(t, \mathbf{x})\in I \times \Oeta \mapsto  \mathbf{u}(t, \mathbf{x}) \in \mathbb{R}^3$ and the tensor field $\bT :(t, \mathbf{x} )\in I \times \Oeta  \mapsto \bT (t, \mathbf{x} ) \in \mathbb{R}^{3\times3}$ in \eqref{x1} are known given forces whereas the structure displacement function
$\eta:(t, \by)\in I \times \omega \mapsto   \eta(t,\by)\in \mathbb{R}$, the solvent velocity field $\mathbf{u}:(t, \mathbf{x})\in I \times \Oeta \mapsto  \mathbf{u}(t, \mathbf{x}) \in \mathbb{R}^3$ and the pressure function $p:(t, \mathbf{x})\in I \times \Oeta \mapsto  p(t, \mathbf{x}) \in \mathbb{R}$ are the unknowns of the problem. Several kinds of solutions can be constructed for \eqref{x1}-\eqref{x2} including distributional  solutions (or weak solutions) \cite{breit2021incompressible} and solutions that satisfies \eqref{x1}-\eqref{x2} pointwise almost everywhere (or strong solutions)  but with different regularity class \cite{ breit2023existence, BMSS}. For the purpose of this work, and to better present the main subject of this work later, we are interested in the high regularity class of strong solutions whose precise definition is: 
\begin{definition}[Strong solution]
\label{def:strongSolutionAlone}
Let $(\bff, g, \eta_0, \eta_\star, \bu_0, \bT)$ be a dataset that satisfies
\begin{equation}
\begin{aligned}
\label{datasetAlone}
&\bff \in W^{1,2}(I;L^{2}(\Oeta))\cap L^{2}(I;W^{2,2} (\Oeta)),
\qquad 
\bff(0)\in W^{1,2}(\Omega_{\eta_0}),
\\&g\in L^2(I;W^{2,2}(\omega)) \cap  W^{1,2}(I;W^{1,2}(\omega)) ,
\\&
\eta_0 \in W^{5,2}(\omega) \text{ with } \Vert \eta_0 \Vert_{L^\infty( \omega)} < L, \quad \eta_\star \in W^{3,2}(\omega),
\\&\bu_0 \in W^{3,2}_{\divx}(\Omega_{\eta_0} )\text{ is such that }\bu_0 \circ \bm{\varphi}_{\eta_0} =\eta_\star \bn \text{ on } \omega,
\\& 
\bT \in   W^{1,2} \big(I; W^{2,2}(\Oeta ) \big)\cap L^2\big(I;W^{3,2}(\Oeta )  \big). 
\end{aligned}
\end{equation}
We say that
$( \eta, \bu,  p )$
is a \textit{strong solution}  of  \eqref{x1}--\eqref{x2} with  dataset $(\bff, g, \eta_0, \eta_\star, \bu_0, \bT)$ if the following holds:
\begin{itemize}
\item[(a)]  the structure displacement $\eta$ is such that $
\Vert \eta \Vert_{L^\infty(I \times \omega)} <L$ and
\begin{align*}
\eta \in W^{1,\infty}\big(I;W^{3,2}(\omega)  \big)\cap L^{\infty}\big(I;W^{5,2}(\omega)  \big) \cap  W^{3,2}\big(I; L^{2}(\omega)  \big)\cap  W^{2,2}\big(I;W^{2,2}(\omega)  \big) \cap  L^2\big(I;W^{6,2}(\omega)  \big);
\end{align*}
\item[(b)] the velocity $\bu$ is such that $\bu  \circ \bm{\varphi}_{\eta} =(\partial_t\eta)\bn$ on $I\times \omega$ and
\begin{align*} 
\bu\in  W^{2,2} \big(I; L^{2}_{\divx}(\Oeta ) \big)\cap W^{1,2} \big(I; W^{2,2}(\Oeta ) \big)\cap L^2\big(I;W^{4,2}(\Oeta)  \big);
\end{align*}
\item[(c)] the pressure $p$ is such that 
\begin{align*}
p\in W^{1,2} \big(I; W^{1,2}(\Oeta ) \big)\cap L^2\big(I;W^{3,2}(\Oeta)  \big);
\end{align*}
\item[(d)] the equations \eqref{x1} are satisfied a.e. in spacetime with $\eta(0)=\eta_0$ and $\partial_t\eta=\eta_\star$ a.e. in $\omega$ as well as $\bfu(0)=\bfu_0$ a.e. in $\Omega_{\eta_0}$.
\end{itemize}
\end{definition}
\noindent 
With this notion of a solution, we have the following result on the existence of a unique local strong solution for the solvent-structure subproblem.
\begin{theorem}\label{thm:BM}
Suppose that the dataset $(\bff, g, \eta_0, \eta_\star, \bu_0, \bT)$ satisfies \eqref{datasetAlone}.
There is a time $T_*>0$ such that \eqref{x1}--\eqref{x2} admits a unique strong solution $(\eta,\bu,p)$, in the sense of Definition \ref{def:strongSolutionAlone}.
\end{theorem}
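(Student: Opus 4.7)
The plan is to construct $(\eta,\bu,p)$ via a Banach fixed-point argument on a short time interval $(0,T_*)$, built around the maximal regularity result for the linear Stokes problem on a moving domain with non-trivial boundary conditions that is announced in the introduction, coupled with standard semigroup theory for the damped plate operator $\partial_t^2 - \partial_t\Delta_\by + \Delta_\by^2$. Since the extra stress $\mathbb{T}$ enters only through the source term $\divx\mathbb{T}$ in the momentum equation, it plays no structural role in the argument beyond contributing a forcing of the prescribed regularity.

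\textbf{The iteration map.} Fix a frozen displacement $\tilde\eta$ in a closed ball of radius $R$ of the target space
\[
\mathcal{X}:= W^{1,\infty}(I;W^{3,2}(\omega))\cap L^{\infty}(I;W^{5,2}(\omega))\cap W^{3,2}(I;L^{2}(\omega))\cap W^{2,2}(I;W^{2,2}(\omega))\cap L^{2}(I;W^{6,2}(\omega)),
\]
centred at the extension of $(\eta_0,\eta_\star)$ obtained by solving the linear plate equation with source $g$, together with a frozen velocity $\tilde\bu$ on $I\times\Omega_{\tilde\eta}$ matching $\bu_0$ at $t=0$. Plug the frozen data into the nonlinear pieces of \eqref{x1}--\eqref{x2}: freeze the domain as $\Omega_{\tilde\eta}$, use $(\tilde\bu\cdot\nabla_{\bx})\tilde\bu$ as part of the forcing, and prescribe the Dirichlet trace of $\bu$ as $(\partial_t\eta)\bn$. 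Applying the maximal regularity result on $I\times\Omega_{\tilde\eta}$ produces $(\bu,p)$ with the regularity of items (b), (c) of Definition~\ref{def:strongSolutionAlone}. Feeding the resulting trace of $\mathbb{S}$ into the right-hand side of the shell equation and solving via semigroup theory produces $\eta$ with the regularity of item (a). This defines a map $\Phi\colon(\tilde\eta,\tilde\bu)\mapsto(\eta,\bu)$ from the ball into itself, provided $T_*$ is chosen appropriately.

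\textbf{Self-map and contraction.} The sources fed into $\Phi$ lie in the correct maximal-regularity spaces: the convective term $(\tilde\bu\cdot\nabla_{\bx})\tilde\bu$ belongs to $W^{1,2}(I;L^{2})\cap L^{2}(I;W^{2,2})$ thanks to the algebra property associated with $W^{2,2}(\Omega_{\tilde\eta})\hookrightarrow L^{\infty}$, and the geometric terms $(\mathbb{S}\bn_{\tilde\eta})\circ\bm{\varphi}_{\tilde\eta}\cdot\bn\,|\det(\nabla_\by\bm{\varphi}_{\tilde\eta})|$ lie in the appropriate shell-forcing space via trace estimates on $\partial\Omega_{\tilde\eta}$ parametrised by $\bm{\varphi}_{\tilde\eta}$; all of these norms are polynomial in $R$. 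The time-trace inequality $\Vert f-f(0)\Vert_{L^{\infty}(I;X)}\lesssim T_{*}^{1/2}\Vert\partial_t f\Vert_{L^{2}(I;X)}$, applied at the top of the regularity scale, then shows that the image of the ball of radius $R$ about the extended initial data returns to the same ball when $T_*$ is small enough, and that $\Phi$ is a contraction in a weaker norm with Lipschitz constant of order $T_{*}^{1/2}$. Banach's theorem yields a unique fixed point, which by construction is a strong solution in the sense of Definition~\ref{def:strongSolutionAlone}. The compatibility $\bu_0\circ\bm{\varphi}_{\eta_0}=\eta_\star\bn$ and the regularity thresholds imposed on $(\eta_0,\eta_\star,\bu_0)$ in \eqref{datasetAlone} are exactly what is required to initialise the iteration consistently and to reach the top of the regularity scale at $t=0$.

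\textbf{Main obstacle.} The entire weight of the argument rests on the maximal regularity estimate for the Stokes system on $I\times\Omega_{\tilde\eta}$ at the high regularity level demanded here, in particular $\bu\in L^{2}(I;W^{4,2})$ and $p\in L^{2}(I;W^{3,2})$, with non-trivial boundary data $(\partial_t\eta)\bn$ that is only as smooth as $\eta$ itself. The delicate points are (i) pulling back to the reference cylinder $I\times\Omega$ via a Hanzawa-type diffeomorphism induced by $\tilde\eta$, (ii) controlling the commutators of the pulled-back Stokes operator with time and tangential derivatives, and (iii) verifying that the regularity of $\tilde\eta$ in $\mathcal{X}$ is precisely enough to preserve each derivative of $\bu$ after pull-back, without degrading the maximal-regularity constants. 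This is exactly why the linear theorem is isolated and advertised as a result of independent interest; once it is in hand, the nonlinear fixed-point argument sketched above is standard.
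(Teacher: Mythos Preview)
The paper does not prove Theorem~\ref{thm:BM} here; it cites \cite{breit2023existence} and summarises that argument as: transform the coupled system to the fixed reference domain via the Hanzawa map, linearise there, derive estimates for the linearised problem on $\Omega$, and close by contraction for small time. Your sketch shares the fixed-point skeleton but differs in two ways. First, you propose to stay on the frozen \emph{moving} domain $\Omega_{\tilde\eta}$ and invoke the moving-domain maximal-regularity results of Section~\ref{sec:maxReg} as a black box, whereas the cited proof pulls back explicitly and works on the fixed reference domain throughout; your route is more streamlined once Theorem~\ref{thm:stokeInhom} and Corollary~\ref{cor:stokeInhom} are available, but is chronologically backwards, since those results are new contributions of the present paper and were not used in \cite{breit2023existence}. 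Second, the cited method treats the linearised \emph{coupled} fluid--shell system as a single object on the reference domain, while you decouple into Stokes-then-shell.

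That decoupling, as you have written it, has a gap: you prescribe the Dirichlet trace of $\bu$ as $(\partial_t\eta)\bn$ with the \emph{new} $\eta$, and only afterwards solve the shell equation for $\eta$ using the trace of $\mathbb{S}(\bu,p)$. This is circular. If you meant $(\partial_t\tilde\eta)\bn$, the linear step decouples cleanly and the interface condition is recovered automatically at the fixed point $\tilde\eta=\eta$; if you genuinely intend the coupled linear problem with boundary data $(\partial_t\eta)\bn$, then its well-posedness at this regularity level is itself the non-trivial ingredient and cannot be read off from Theorem~\ref{thm:stokeInhom} or Corollary~\ref{cor:stokeInhom}, both of which assume $\eta$ is given. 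Either reading can be made to work, but the iteration map must be specified unambiguously before the self-map and contraction estimates make sense.
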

The existence of a local-in-time strong solution to \eqref{x1}--\eqref{x2} in the sense of Definition \ref{def:strongSolutionAlone} has recently been shown in \cite[(3.7) \& Theorem 3.2]{breit2023existence}, see also \cite[Proposition 3.5]{breit2023existence}. This follows the now standard method where: 
 \begin{itemize}
 \item  the solvent-structure system is transformed to its reference domain.
\item The resulting system on the reference domain is then linearised and estimates for the
linearised system are obtained.
\item One then construct a contraction map for the linearised problem (by choosing the end time small
enough) leading to a unique local solution to the system on its original/actual domain.
 \end{itemize}

Our main interest in this work, however, is to study a mutually coupled three-states solute-solvent-structure problem where the solute is described by the dynamic evolution of the tensor $\bT$. This describes the fully macroscopic problem of a dilute 3-dimensional polymer fluid of Oldroyd-B type interacting with a 2-dimensional viscoelastic shell. This is part of a recent program to understand the macroscopic evolution of such a solute-solvent-structure (SSS) three matter state problem and this is the first attempt at the 3D-3D-2D setting as the others tackled the 2D-2D-1D problem. Also, compared to \cite{mensah2023weak}, the solute subcomponent in this current work is purely elastic without any centre-of-mass diffusion and thus, poses a significant analytical challenge.
 More precisely, our goal is to find a structure displacement function $\eta:(t, \by)\in I \times \omega \mapsto   \eta(t,\by)\in \mathbb{R}$, a fluid velocity field $\mathbf{u}:(t, \mathbf{x})\in I \times \Oeta \mapsto  \mathbf{u}(t, \mathbf{x}) \in \mathbb{R}^3$, a pressure function $p:(t, \mathbf{x})\in I \times \Oeta \mapsto  p(t, \mathbf{x}) \in \mathbb{R}$ 
and an extra stress tensor $\bT :(t, \mathbf{x} )\in I \times \Oeta  \mapsto \bT (t, \mathbf{x} ) \in \mathbb{R}^{3\times3}$
 such that the system of equations 
\begin{align}
\divx \bu=0, \label{divfree} 
\\
\partial_t \bu  + (\mathbf{u}\cdot \nabx)\mathbf{u} 
= 
\delx \bu -\nabx p
+\bff
+
\divx   \bT, \label{momEq}
\\
\partial_t^2 \eta - \partial_t\dely \eta + \dely^2 \eta = g - ( \mathbb{S}\bn_\eta )\circ \bm{\varphi}_\eta\cdot\bn \,\vert\det(\naby\bm{\varphi}_\eta)\vert, 
\label{shellEQ}
\\
\partial_t \bT + (\mathbf{u}\cdot \nabx) \bT
=
(\nabx \bu)\bT + \bT(\nabx \bu)^\top+2\mathbb{D}(\nabx \bu) - 2 \bT  \label{solute}
\end{align}
holds in $I\times\Oeta\subset \mathbb R^{1+3}$ (with \eqref{shellEQ} defined on $I\times\omega\subset \mathbb R^{1+2}$).  We complement \eqref{divfree}--\eqref{solute} with the following initial and boundary conditions
\begin{align}
&\eta(0,\cdot)=\eta_0(\cdot), \qquad\partial_t\eta(0,\cdot)=\eta_\star(\cdot) & \text{on }\omega,
\\
&\bu(0,\cdot)=\bu_0(\cdot) & \text{in }\Omega_{\eta_0},
\\
&\bT(0,\cdot)=\bT_0(\cdot) &\text{in }\Omega_{\eta_0},
\label{initialCondSolv}
\\
\label{interface}
&\bu\circ\bm{\varphi}_\eta=(\partial_t\eta)\bn & \text{on }I\times \omega.
\end{align}
Note that the first two terms on the right of \eqref{solute} can be rewritten, in perhaps, the more familiar form
\begin{align*}
(\nabx \bu)\bT + \bT(\nabx \bu)^\top =Q(\bT,\nabx\bu)
\end{align*}
where
\begin{align*}
Q(\bT,\nabx\bu)
=
\bT\bD(\nabx \bu)+\bD(\nabx \bu)\bT +
\bW(\nabx \bu)\bT - \bT\bW(\nabx \bu),
\end{align*}
with
\begin{align*}
&\bW(\nabx \bu)=\tfrac{1}{2}\big(\nabx \bu-(\nabx \bu)^\top \big)
\end{align*}
being the skew-symmetric or anti-symmetric part of the velocity gradient $\nabx\bu$.

As mentioned earlier, the three-states analysis of polymeric fluid-structure interaction problems  is a recent program started in \cite{breit2021incompressible}. While classical fluid-structure interaction problems \eqref{x1}-\eqref{x2}  are concerned with the interplay between a solvent and a structure, polymeric fluid-structure interaction problems involve the mutual interactions between a solute, a solvent, and a structure. Analyzing such a system, therefore, requires fundamental techniques from two previously distinct areas of continuum mechanics: the analysis of polymeric fluids and the study of fluid-structure interaction problems. In the initial work \cite{breit2021incompressible}, the solute subcomponent is modelled on the mesoscopic scale leading to a Fokker--Planck equation for the (mesoscopic) description of the polymer distribution. The more general Koiter shell is also used to model the structure and the existence of a weak solution to the underlying system is constructed. When the same structure equation \eqref{shellEQ} is used instead of the Koiter shell, a unique strong solution to the mixed mesoscopic-macroscopic model is then constructed in \cite{breit2023existence}.

In the 2D-2D-1D setting for the fully macroscopic problem, weak and strong solutions are constructed in \cite{mensah2023weak}. An important feature making this possible is a dissipation term in the solute subcomponent which is not available in our current setting. In fact, the absence of this term is a crucial reason why we are unable to construct a weak solution in this work. Having constructed these two main classes of solutions, we perform certain singular limit arguments for this dissipative system in our other works \cite{mensah2025equilibration, mensah2024vanishing}.  In \cite{mensah2024vanishing},  we showed that
any family of strong solutions that are parametrized by the center-of-mass diffusion coefficient converges, as the coefficient goes to zero, to a weak solution of a corotational polymeric
fluid-structure interaction system without center-of-mass diffusion but whose polymer number density and extra stress tensor are essentially bounded. As a consequence, a weak-strong uniqueness result is obtained that says that the weak solution of the latter is unique in the class of  strong solutions of the former as the center-of-mass diffusion vanishes.

A similar singular limit argument is also performed in \cite{mensah2025equilibration} except that the limit system is now purely non-dissipative. As such, the damping term in the solute is eliminated together with the center-of-mass diffusion. Weak-strong uniqueness is also shown to be true and the rate at which the solution to the original dissipative system decays exponentially in time to the equilibrium solution is also given.

Coming back to our current work for the 3D-3D-2D problem without diffusion in the solute, we aim to construct relatively high regular solutions that satisfy the whole system pointwise almost everywhere. Beyond the difficulty posed by the lack of diffusion, there are other reasons why constructing less regular and more generalised solutions appear to be beyond reach. One such reason is the hyperbolic structure exhibited by the solute. Several systems with this structure, including our own, that are posed  in a $d$-dimensional spatial domain, inadvertently encounter what we call the $W^{s,2}$-problem where spatially speaking, the solution must belong to $W^{s,2}$ for $s>d/2$. This immediately imposes a fairly high regularity threshold on the solute and since our system is highly coupled, this imposition transfers to the solvent and structure subsystems.

Another challenge emanates from the very 3D-3D-2D setting where several optimal interpolation that one obtains in the 2D-2D-1D setting are lost. This is particularly critical for the solute subsystem where the 3D Ladyzhenskaya's inequality applied to the terms $(\nabx \bu)\bT + \bT(\nabx \bu)^\top$ in the convected derivative yield undesirable estimates when working in lower regularity spaces.

Due to these reasons, we construct a higher regularity notion of a strong solution on the level of Definition \ref{def:strongSolutionAlone} for just the solvent-structure interaction problem. The precise definition is:

\begin{definition}[Strong solution]
\label{def:strongSolution}
Let $(\bff, g,\eta_0, \eta_\star,  \bu_0, \bT_0)$
be a dataset that satisfies
\begin{equation}
\begin{aligned}
\label{mainDataForAllStrong}
&\bff \in W^{1,2}(I;L^{2}_\mathrm{loc}(\Oeta))\cap L^{2}(I;W^{2,2}_\mathrm{loc}(\Oeta)),
\qquad 
\bff(0)\in W^{1,2}(\Omega_{\eta_0}),
\\&g\in L^2(I;W^{2,2}(\omega)) \cap  W^{1,2}(I;W^{1,2}(\omega)) ,
\\&
\eta_0 \in W^{5,2}(\omega) \text{ with } \Vert \eta_0 \Vert_{L^\infty( \omega)} < L, \quad \eta_\star \in W^{3,2}(\omega),
\\&\bu_0 \in W^{3,2}_{\divx}(\Omega_{\eta_0} )\text{ is such that }\bu_0 \circ \bm{\varphi}_{\eta_0} =\eta_\star \bn \text{ on } \omega,
\\& 
\bT_0\in W^{3,2}(\Omega_{\eta_0}).
\end{aligned}
\end{equation}
We call 
$(\eta, \bu, p,  \bT)$ 
a \textit{strong solution} of   \eqref{divfree}--\eqref{interface} with dataset $(\bff, g,\eta_0, \eta_\star,  \bu_0, \bT_0)$  if:
\begin{itemize}
\item[(a)] the structure-function $\eta$ is such that $
\Vert \eta \Vert_{L^\infty(I \times \omega)} <L$ and 
\begin{align*}
\eta \in W^{1,\infty}\big(I;W^{3,2}(\omega)  \big)\cap L^{\infty}\big(I;W^{5,2}(\omega)  \big) \cap  W^{3,2}\big(I; L^{2}(\omega)  \big)\cap  W^{2,2}\big(I;W^{2,2}(\omega)  \big) \cap  L^2\big(I;W^{6,2}(\omega)  \big);
\end{align*}
\item[(b)] the velocity $\bu$ is such that $\bu  \circ \bm{\varphi}_{\eta} =(\partial_t\eta)\bn$ on $I\times \omega$ and
\begin{align*} 
\bu\in  W^{2,2} \big(I; L^{2}_{\divx}(\Oeta ) \big)\cap W^{1,2} \big(I; W^{2,2}(\Oeta ) \big)\cap L^2\big(I;W^{4,2}(\Oeta)  \big);
\end{align*}
\item[(c)] the pressure $p$ is such that 
\begin{align*}
p\in W^{1,2} \big(I; W^{1,2}(\Oeta ) \big)\cap L^2\big(I;W^{3,2}(\Oeta)  \big);
\end{align*}
\item[(d)] the tensor $\bT$ is such that 
\begin{align*}
 \bT \in W^{1,\infty}\big(I;W^{2,2}(\Oeta)  \big) \cap  L^\infty\big(I;W^{3,2}(\Oeta)  \big);
\end{align*}
\item[(e)] equations \eqref{divfree}--\eqref{solute} are satisfied almost everywhere in spacetime with $\eta(0)=\eta_0$ and $\partial_t\eta(0)=\eta_\star$ almost everywhere on $\omega$, as well as $\bfu(0)=\bfu_0$ and $\bT(0)=\bT_0$ almost everywhere in $\Omega_{\eta_0}$.
\end{itemize}
\end{definition}

With this definition in hand, we now state our main result.
\begin{theorem}\label{thm:MAIN}
Let $T>0$ be given. 
Let  $(\eta,\bu,p)$ be a unique local strong solution of \eqref{x1}-\eqref{x2}, in the sense of
Definition \ref{def:strongSolutionAlone}, with dataset $(\bff, g, \eta_0, \eta_\star, \bu_0, \underline{\bT})$ satisfying \eqref{datasetAlone} and 
\begin{equation}
\begin{aligned} 
 c&\big( 
 \Vert \eta_\star\Vert_{W^{1,2}(\omega)}^2
 +
  \Vert \eta_0\Vert_{W^{3,2}(\omega)}^2
  +
  \Vert  \bu_0\Vert_{W^{1,2}(\Omega_{\eta_0})}^2
  \big) 
   \\&\quad+ \int_{I } \big(  \Vert \bff\Vert_{L^2(\Omega_\eta)}^2  +
    \Vert \underline{\bT}\Vert_{W^{1,2}(\Omega_\eta)}^2  +
 \Vert g\Vert_{L^2(\omega)}^2 \big)\dt
 \leq
 \epsilon
\end{aligned}
\end{equation}
for some strictly positive constant $\epsilon<1$. 
Then there exist a strictly positive time $T_*\leq T$ such that \eqref{divfree}--\eqref{interface}  admits a unique strong solution $(\eta,\bu,p,\bT)$, in the sense of
Definition \ref{def:strongSolution}, with dataset $(\bff, g, \eta_0, \eta_\star, \bu_0,  \bT_0)$ satisfying \eqref{mainDataForAllStrong}.  
\end{theorem}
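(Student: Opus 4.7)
Following the abstract, the plan is a Banach fixed-point argument that decouples the solute tensor $\bT$ from the solvent-structure subsystem \eqref{divfree}-\eqref{shellEQ}. Fix a time horizon $T_*\in(0,T]$ and a radius $R>0$, and let $\mathcal{B}_{R,T_*}$ denote the set of candidate tensors $\tilde{\bT}$ which satisfy $\tilde{\bT}(0)=\bT_0$, live in the regularity class prescribed by Definition \ref{def:strongSolution}(d), and are bounded by $R$ in the corresponding norm. For each $\tilde{\bT}\in\mathcal{B}_{R,T_*}$, I would invoke Theorem \ref{thm:BM} on the solvent-structure subsystem \eqref{x1}-\eqref{x2} with dataset $(\bff,g,\eta_0,\eta_\star,\bu_0,\tilde{\bT})$ to obtain a unique strong solution $(\eta,\bu,p)$ in the sense of Definition \ref{def:strongSolutionAlone}. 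The smallness assumption in the hypothesis of Theorem \ref{thm:MAIN} is exactly what is needed to ensure that this solution exists on a common interval $[0,T_*]$, uniformly in $\tilde{\bT}\in\mathcal{B}_{R,T_*}$, and with uniform bounds on the associated norms.

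\textbf{Solute step.} Given $(\eta,\bu)$ from the previous step, I would then solve
\begin{equation*}
\partial_t\bT+(\bu\cdot\nabx)\bT+2\bT=Q(\bT,\nabx\bu)+2\mathbb{D}(\nabx\bu),\qquad \bT(0)=\bT_0,
\end{equation*}
on $(0,T_*)\times\Oeta$, by pulling back to the reference configuration $\Omega_{\eta_0}$ along (a Hanzawa-type extension of) $\bm{\varphi}_\eta$. In reference coordinates the equation is a linear first-order hyperbolic system whose coefficients depend on $\nabx\bu\circ\bm{\varphi}_\eta$, solvable by the method of characteristics. Standard $W^{3,2}$-energy and commutator estimates, exploiting that $\nabx\bu\in L^2(0,T_*;W^{3,2})$ and the three-dimensional embedding $W^{3,2}\hookrightarrow L^\infty$, propagate the $W^{3,2}$-regularity of $\bT_0$ to the solution $\bT$ in the class prescribed by Definition \ref{def:strongSolution}(d).

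\textbf{Fixed point and uniqueness.} Let $\mathcal{F}:\tilde{\bT}\mapsto\bT$ denote the composition of the previous two steps. I would establish that (i) for a suitable choice of $R$ and possibly smaller $T_*$, $\mathcal{F}$ maps $\mathcal{B}_{R,T_*}$ into itself, using the smallness assumption to absorb the forcings and the nonlinearities, and (ii) $\mathcal{F}$ is a strict contraction in a lower-regularity metric such as $C([0,T_*];W^{2,2})$. A one-derivative loss is affordable because both the Lipschitz estimate for the solvent-structure solution map in the forcing $\tilde{\bT}$, and the stability estimate for the linear transport equation governing $\bT^1-\bT^2$, naturally lose one derivative. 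Banach's fixed-point theorem then produces the desired unique strong solution of \eqref{divfree}-\eqref{interface}; uniqueness in the full class of Definition \ref{def:strongSolution} follows by repeating the difference estimate directly on two putative solutions.

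\textbf{Main obstacle.} The central analytical difficulty is the coexistence of the hyperbolic nature of \eqref{solute} with the moving-domain geometry. Hyperbolicity forces the high $W^{3,2}$-regularity level (required because $3>d/2$ for $d=3$), and closing the $W^{3,2}$-estimate for $\bT$ demands $\nabx\bu$ with two essentially bounded derivatives; in the non-flat moving-domain setting this is precisely what the maximal regularity result for the Stokes problem with non-trivial boundary conditions -- advertised in the abstract and implicit in Theorem \ref{thm:BM} -- delivers. A secondary but genuine nuisance is that distinct iterates $\tilde{\bT}^1,\tilde{\bT}^2$ produce distinct moving domains $\Omega_{\eta^1},\Omega_{\eta^2}$, so the contraction estimate requires pulling everything back to the common reference $\Omega_{\eta_0}$ and carefully controlling $\bm{\varphi}_{\eta^1}-\bm{\varphi}_{\eta^2}$ in terms of $\tilde{\bT}^1-\tilde{\bT}^2$ via the stability properties of the solvent-structure subproblem.
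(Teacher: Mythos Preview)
Your proposal follows the paper's route: decouple into the solvent--structure and solute subproblems, solve each, and close by a Banach fixed-point with self-mapping checked in the high norm and contraction in a weaker one. A few implementation choices differ from the paper and are worth flagging. First, the paper runs the contraction all the way down in $Y_{\eta_1}=L^\infty(I_*;L^2(\Omega_{\eta_1}))$, not in $C([0,T_*];W^{2,2})$; this is simpler because only the basic energy-level stability of the solvent--structure map in $\underline{\bT}$ is then needed, whereas a $W^{2,2}$-contraction would require a higher-order difference estimate for $(\eta,\bu,p)$ that you have not secured. Second, the paper solves the solute equation directly on the moving domain (the kinematic condition $\bu\circ\bm{\varphi}_\eta=\partial_t\eta\,\bn$ keeps characteristics inside $\Omega_\eta$), after vectorising $\bT\in\mathbb{R}^{3\times3}$ into $\mathbf{T}\in\mathbb{R}^9$ so that the awkward right-multiplication $\bT(\nabx\bu)^\top$ becomes left multiplication by a $9\times 9$ matrix and the integrating-factor formula applies cleanly; your pull-back to $\Omega_{\eta_0}$ is a viable alternative. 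Finally, your reading of the smallness hypothesis is slightly off: it is not primarily there to guarantee a uniform existence time, but is used in the paper's Proposition~\ref{prop:BM} to make the acceleration-level energy $\leq\epsilon$, which in turn allows the nonlinear terms arising from the time-differentiated system and the maximal-regularity estimate (Corollary~\ref{cor:stokeInhom}, not merely Theorem~\ref{thm:BM}) to be absorbed when closing the higher-order bounds on $(\eta,\bu,p)$.
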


\subsection{Plan} The strategy for solving Theorem \ref{thm:MAIN} involves splitting the problem into a solvent-structure subproblem and a solute subproblem. A fixed-point argument for composite maps is then used to  glue the solutions of these subproblems together.
As stated in Theorem \ref{thm:BM}, we have recently constructed the required notion of a strong solution for the first subproblem. To combine this solution with the solution of the second subproblem, however, we require non-trivial higher order estimates beyond the natural energy estimate. This raises an issue of maximal regularity for the Stokes problem on a moving domain with non-trivial boundary conditions; a result that is currently unavailable in the literature. For this reason, we devote an intermediate section, Section \ref{sec:maxReg}, to the proof of this result. The main result is given by Theorem \ref{thm:stokeInhom} below and it extends the recent breakthrough \cite{breit2025partial} for the same moving boundary Stokes problem but with the trivial boundary condition. By combining these maximal regularity results with elliptic estimates for the pressure, we are able to derive our desired estimate for the solvent-structure subproblem under a smallness condition on the initial acceleration.

Due to the lack of dissipation in the solute subproblem, we find it difficult to use a Galerkin method to construct a solution for the solute subproblem. We, therefore, resort to the method of characteristics. To apply this to our equation, however, we apply a regularity preserving transformation to rectify the disorder in the convected derivative term $\bT(\nabx \bu)^\top$. By `disorder', we simply mean that  we ideally wanted the terms in the product swapped. Since matrix products are not commutative, we transform the whole tensor-valued equation for the solute subproblem into a vector-valued equation which allows us to apply the aforementioned method of characteristics while preserving regularity.

Having constructed solutions to the two subproblems, we then apply Banach fixed-point arguments to  get a local strong solution to the fully coupled system.  We mention in passing that the high-regularity required for our solution makes it difficult, if at all possible, to hope to extend the local solution for the full system globally (assuming no degeneracies even occur in the shell) even with just a small initial acceleration.

\section{Preliminaries}
\label{sec:prelims}
For any two non-negative quantities $F$ and $G$, we write $F \lesssim G$  if there is a constant $c>0$  such that $F \leq c\,G$. If $F \lesssim G$ and $G\lesssim F$ both hold, we use the notation $F\sim G$.  The scalar matrix product of two matrices $\mathbb{A}=(a_{ij})_{i,j=1}^d$ and $\mathbb{B}=(b_{ij})_{i,j=1}^d$ is denoted by $\mathbb{A}:\mathbb{B}=\sum_{ij}a_{ij}b_{ij}$.
The symbol $\vert \cdot \vert$ may be used in four different contexts. For a scalar function $f\in \mathbb{R}$, $\vert f\vert$ denotes the absolute value of $f$. For a vector $\bff\in \mathbb{R}^d$, $\vert \bff \vert$ denotes the Euclidean norm of $\bff$. For a square matrix $\mathbb{F}\in \mathbb{R}^{d\times d}$, $\vert \mathbb{F} \vert$ shall denote the Frobenius norm $\sqrt{\mathrm{trace}(\mathbb{F}^T\mathbb{F})}$. Also, if $S\subseteq  \mathbb{R}^d$ is  a (sub)set, then $\vert S \vert$ is the $d$-dimensional Lebesgue measure of $S$.
Since we only consider functions on $\omega \subset\mathbb{R}^2$ with periodic boundary
conditions and zero mean values, we have the following equivalences
\begin{equation}
\begin{aligned}\nonumber
\Vert \cdot\Vert_{W^{1,2}(\omega)}\sim
\Vert \naby\cdot\Vert_{L^{2}(\omega)},
\qquad
\Vert \cdot\Vert_{W^{2,2}(\omega)}\sim
\Vert \Dely\cdot\Vert_{L^{2}(\omega)},
\\
\Vert \cdot\Vert_{W^{3,2}(\omega)}\sim
\Vert \naby\Dely\cdot\Vert_{L^{2}(\omega)},
\qquad
\Vert \cdot\Vert_{W^{4,2}(\omega)}\sim
\Vert \Dely^2\cdot\Vert_{L^{2}(\omega)},
\\
\Vert \cdot\Vert_{W^{5,2}(\omega)}\sim
\Vert \naby\Dely^2\cdot\Vert_{L^{2}(\omega)},
\qquad
\Vert \cdot\Vert_{W^{6,2}(\omega)}\sim
\Vert \Dely^3\cdot\Vert_{L^{2}(\omega)}.
\end{aligned} 
\end{equation}
For $I:=(0,T)$, $T>0$, and $\eta\in C(\overline{I}\times\omega)$ satisfying $\|\eta\|_{L^\infty(I\times\omega)}\leq L$ where $L>0$ is a constant, we define for $1\leq p,r\leq\infty$,
\begin{align*} 
L^p(I;L^r(\Omega_\eta))&:=\Big\{v\in L^1(I\times\Omega_\eta):\substack{v(t,\cdot)\in L^r(\Omega_{\eta(t)})\,\,\text{for a.e. }t,\\\|v(t,\cdot)\|_{L^r(\Omega_{\eta(t)})}\in L^p(I)}\Big\},\\
L^p(I;W^{1,r}(\Omega_\eta))&:=\big\{v\in L^p(I;L^r(\Omega_\eta)):\,\,\nabx v\in L^p(I;L^r(\Omega_\eta))\big\}.
\end{align*} 
Higher-order Sobolev spaces can be defined accordingly. For $k>0$ with $k\notin\mathbb N$, we define the fractional Sobolev space $L^p(I;W^{k,r}(\Oeta))$ as the class of $L^p(I;L^r(\Omega_\eta))$-functions $v$ for which 
\begin{align*}
\|v\|_{L^p(I;W^{k,r}(\Oeta))}^p
&=\int_I\bigg(\int_{\Oeta} \vert v\vert^r\dx
+\int_{\Oeta}\int_{\Oeta}\frac{|v(\bx)-v(\bx')|^r}{|\bx-\bx'|^{d+k r}}\dx\dx'\bigg)^{\frac{p}{r}}\dt
\end{align*}
is finite. Accordingly, we can also introduce fractional differentiability in time for the spaces on moving domains.

\subsection{Setup}
\label{sec:setup}
The reference spatial domain  $\Omega \subset \mathbb{R}^3$ has a boundary $\partial\Omega\subset \mathbb{R}^{2}$ that may consist of a flexible part $\omega\subset\mathbb{R}^{2}$ and a rigid part $\Gamma\subset \mathbb{R}^{2}$. However, because the analysis at the rigid part is essentially trivial when compared to the fixed part, we shall identify the whole of $\partial \Omega$ with $\omega$ and endow it with periodic boundary conditions. Let $I:=(0,T)$ represent a time interval for a given constant $T>0$. The time-dependent  displacement of the structure is given by $\eta:\overline{I}\times\omega\rightarrow(-L,L)$ where $L>0$ is a fixed length of the tubular neighbourhood of $\partial\Omega$ given by
\begin{align*}
S_L:=\{\bx\in \mathbb{R}^3\,:\, \mathrm{dist}(\bx,\partial\Omega
)<L \}.
\end{align*}
For some $k\in\mathbb{N}$ large enough, we now assume that $\partial\Omega$  is parametrized by an injective mapping $\bm{\varphi}\in C^k(\omega;\mathbb{R}^3)$ with $\naby \bm{\varphi}\neq0$ and small Lipschitz bound, i.e. there exists a strictly positive constant $\epsilon<1$ such that $\Vert \bm{\varphi}\Vert_{W^{1,\infty}(\omega)}\leq \epsilon$.
The set $\partial{\Omega_{\eta(t)}}$, given by 
\begin{align*}
\partial{\Omega_{\eta(t)}}=\big\{\bm{\varphi}_{\eta(t)}:=\bm{\varphi}(\by)+\bn(\by)\eta(t,\by)\, :\, t\in I, \by\in \omega\big\},
\end{align*}
then represents the boundary of the flexible (moving) domain at any instant of time $t\in I$ and the vector $\bn(\by)$ is the outward unit normal at the point $\by\in \omega$. 
We also let $\mathbf{n}_{\eta(t)}(\by) $
be the corresponding outward  normal of $\partial{\Omega_{\eta(t)}}$ at the spacetime point $\by\in \omega$ and $t\in I$. Then for $L>0$ sufficiently small, we note that $\bn_{\eta(t)}(\by)$ is close to $\bn(\by)$ and $\bm{\varphi}_{\eta(t)}$ is close to $\bm{\varphi}$. As a result,  it follows that
\begin{align}
\label{eq:1705}
\partial_{y_1} \bm{\varphi}_{\eta(t)}\times
\partial_{y_2} \bm{\varphi}_{\eta(t)} \neq\bm{0} \quad\text{ and } \quad
\bn(\by)\cdot \bn_{\eta(t)}(\by)\neq 0
\end{align}
for $\by\in \omega$ and $t\in I$. Thus, in particular, there is no loss of strict positivity of the Jacobian determinant provided that $\Vert \eta\Vert_{L^\infty(I;W^{1,\infty}(\omega))}<L$.

For the interior points, we  transform the  reference domain $\Omega$ into a time-dependent moving domain $\Omega_{\eta(t)}$  whose state at time $t\in\overline{I}$ is given by
\begin{align*}
\Omega_{\eta(t)}
 =\big\{
 \bm{\Psi}_{\eta(t)}(\bx):\, \bx \in \Omega 
  \big\}.
\end{align*}
Here,
\begin{align*}
\bm{\Psi}_{\eta(t)}(\bx)=
\begin{cases}
\bx+\bn(\by(\bx))\eta(t,\by(\bx))\phi(s(\bx))     & \quad \text{if } \mathrm{dist}(\bx,\partial\Omega)<L,\\
    \bx & \quad \text{elsewhere } 
  \end{cases}
\end{align*}
is the Hanzawa transform with inverse $\bm{\Psi}_{-\eta(t)}$ and where for a point $\bx$ in the neighbourhood of $\partial\Omega$, the vector $\bn(\by(\bx))$ is the outward unit normal at the point $\by(\bx)=\mathrm{arg min}_{\by\in\omega}\vert\bx -\bm{\varphi}(\by)\vert$. Also, $s(\bx)=(\bx-\bm{\varphi}(\by(\bx)))\cdot\bn(\by(\bx))$ and $\phi\in C^\infty(\mathbb{R})$ is a cut-off function that is $\phi\equiv0$ in the neighbourhood of $-L$ and $\phi\equiv1$ in the neighbourhood of $0$. Note that $\bm{\Psi}_{\eta(t)}(\bx)$ can be rewritten as
\begin{align*}
\bm{\Psi}_{\eta(t)}(\bx)=
\begin{cases}
\bm{\varphi}(\by(\bx))+\bn(\by(\bx))[s(\bx)+\eta(t,\by(\bx))\phi(s(\bx)) ]    & \quad \text{if } \mathrm{dist}(\bx,\partial\Omega)<L,\\
    \bx & \quad \text{elsewhere. } 
  \end{cases}
\end{align*}
We also note that for the boundary point $\bm{\varphi}(\by)\in \partial\Omega\subset\R^3$, we have that
\begin{align*}
\big(\bm{\varphi}_\eta\circ\bm{\varphi}^{-1}\big)(\bm{\varphi}(\by))=\bm{\varphi}_\eta(\by)=\bm{\varphi}(\by)+\bn\eta\in \partial\Oeta.
\end{align*}
Therefore, $\bm{\varphi}_\eta\circ\bm{\varphi}^{-1}:\partial\Omega\rightarrow\partial\Oeta$ corresponds to the Hanzawa transform $\bm{\Psi}_\eta:\overline{\Omega}\rightarrow\overline{\Oeta}$ restricted to the boundary, i.e., $\bm{\Psi}_\eta\vert_{\partial\Omega}=\bm{\varphi}_\eta\circ\bm{\varphi}^{-1}$. Consequently, in particular,
\begin{align}
\label{bdd3trans}
\bm{\varphi}_\eta=\bm{\Psi}_\eta\circ\bm{\varphi}&\quad \text{on } I \times \omega.
\end{align}
 Furthermore, the transform $\bm{\Psi}_\eta$ and its inverse 
 $\bm{\Psi}_\eta=\bm{\Psi}_{-\eta}$ satisfies the following properties, see \cite{breit2022regularity, BMSS} for details.
 If for some $\ell,R>0$, we assume that
\begin{align*}
\Vert\eta\Vert_{L^\infty(\omega)}
+
\Vert\zeta\Vert_{L^\infty(\omega)}
< \ell <L \qquad\text{and}\qquad
\Vert\naby\eta\Vert_{L^\infty(\omega)}
+
\Vert\naby\zeta\Vert_{L^\infty(\omega)}
<R
\end{align*}
holds, then for any  $s>0$, $\varrho,p\in[1,\infty]$ and for any $\eta,\zeta \in B^{s}_{\varrho,p}(\omega)\cap W^{1,\infty}(\omega)$ (where $B^{s}_{\varrho,p}$ is a Besov space), we have that the estimates
\begin{align}
\label{210and212}
&\Vert \bm{\Psi}_\eta \Vert_{B^s_{\varrho,p}(\Omega\cup S_\ell)}
+
\Vert \bm{\Psi}_\eta^{-1} \Vert_{B^s_{\varrho,p}(\Omega\cup S_\ell)}
 \lesssim
1+ \Vert \eta \Vert_{B^s_{\varrho,p}(\omega)},
\\
\label{211and213}
&\Vert \bm{\Psi}_\eta - \bm{\Psi}_\zeta  \Vert_{B^s_{\varrho,p}(\Omega\cup S_\ell)} 
+
\Vert \bm{\Psi}_\eta^{-1} - \bm{\Psi}_\zeta^{-1}  \Vert_{B^s_{\varrho,p}(\Omega\cup S_\ell)} 
\lesssim
 \Vert \eta - \zeta \Vert_{B^s_{\varrho,p}(\omega)}
\end{align}
and
\begin{align}
\label{218}
&\Vert \partial_t\bm{\Psi}_\eta \Vert_{B^s_{\varrho,p}(\Omega\cup S_\ell)}
\lesssim
 \Vert \partial_t\eta \Vert_{B^{s}_{ \varrho,p}(\omega)},
\qquad
\eta
\in W^{1,1}(I;B^{s}_{\varrho,p}(\omega))
\end{align}
holds uniformly in time with the hidden constants depending only on the reference geometry, on $L-\ell$ and $R$.

\section{Maximal regularity theory}
\label{sec:maxReg}
A central result we shall use in our construction of a solution, which is of independent interest, is the extension of the classical maximal regularity theory for the Stoke system to moving domains with nontrivial boundary conditions. This is a new result and builds on the recent work \cite[Theorem 3.1]{breit2025partial} for moving domains with homogeneous boundary conditions. More precisely, we consider the following system
\begin{equation}
\begin{aligned}
\label{stokeInhom}
&\partial_t \bu   
= 
\delx \bu -\nabx p
+\bff 
&\text{ in } I\times\Oeta,
\\
&\divx\bu=0
&\text{ in } I\times\Oeta,
\\&
\bu\circ\bm{\varphi}_\eta =(\partial_t\eta)\bn
&\text{ on } I\times \omega,
\\&
\bu(0)=\bu_0
&\text{ in }  \Omega_{\eta_0}
\end{aligned}
\end{equation}
and prove the following result.
\begin{theorem}\label{thm:stokeInhom}
Let $(\bff, \eta_0, \eta_\star,  \bu_0, \eta)$
be a dataset such that
\begin{align*}
&\bff \in L^2(I\times\Oeta), \qquad
\eta(0,\cdot)=\eta_0(\cdot)\in W^{3,2}(\omega),
\\&
\partial_t\eta (0,\cdot)=\eta_\star(\cdot)\in W^{1,2}(\omega), \qquad
\bu(0,\cdot)=\bu_0(\cdot)\in W^{1,2}_{\divx}(\Omega_{\eta_0}),
\\&
\text{with }\bu_0 \circ \bm{\varphi}_{\eta_0} =\eta_\star\bn \text{   on } \omega.
\end{align*} 
Suppose further that
\begin{align*}
\eta\in W^{2,2}(I;L^{2}(\omega))
\cap
W^{1,\infty}(I;W^{1,2}(\omega))
\cap 
W^{1,2}(I;W^{2,2}(\omega))
\cap
L^\infty(I;W^{3,2}(\omega))
\end{align*} 
and  there exists constants $\ell,\kappa_0>0$ such that
\begin{align*}
\Vert\eta\Vert_{L^\infty(I\times\omega)}\leq \ell<L\qquad
\inf_{I\times\omega}\vert\partial_1\bm{\varphi}_\eta\times \partial_2 \bm{\varphi}_\eta\vert \geq \kappa_0.
\end{align*} 
Then there exist a unique solution to \eqref{thm:stokeInhom} such that the estimate
\begin{align*}
\int_I\big(\Vert \partial_t\bu\Vert_{L^2(\Oeta)}^2+&\Vert  \bu\Vert_{W^{2,2}(\Oeta)}^2
+
\Vert p\Vert_{W^{1,2}(\Oeta)}^2\big)\dt
\\
\lesssim&
\Vert\bu_0\Vert_{W^{1,2}(\Omega_{\eta_0})}^2
+
\Vert\eta_\star\Vert_{W^{1,2}(\omega)}^2
+
\int_I\big(\Vert\bff \Vert_{L^2(\Oeta)}^2
+
 \|\partial_t^2\eta \|_{L^{2}( \omega )}^2 
\big)\dt
\\&+
\int_I
\big( 
 \| |\partial_t \eta|^2 \|_{L^{2}( \omega )}^2
+
\|\partial_t\eta \|_{W^{2,2}( \omega )}^2
+
\|\partial_t\eta\,\naby^2 \eta \|_{L^{2}( \omega )}^2
\big)\dt
\\&\qquad+
\int_I
\big(
 \|\naby\partial_t \eta\,\naby \eta\|_{L^{2}( \omega )}^2 
+
\|\partial_t\eta\,|\naby \eta|^2 \|_{L^{2}( \omega )}^2
\big)\dt
\end{align*}
hold with a constant depending on $\eta_0$.
\end{theorem}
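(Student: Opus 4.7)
The plan is to reduce \eqref{stokeInhom} to the homogeneous-boundary Stokes problem on a moving domain already solved in \cite[Theorem~3.1]{breit2025partial}. Concretely, I first construct a solenoidal lift $\bv:I\times\Oeta\to\R^3$ of the boundary datum with $\divx\bv=0$ in $\Oeta$, $\bv\circ\bm{\varphi}_\eta=(\partial_t\eta)\bn$ on $I\times\omega$, and the initial compatibility $\bv(0)\circ\bm{\varphi}_{\eta_0}=\eta_\star\bn$. A natural candidate, built through the Hanzawa framework of Section~\ref{sec:setup}, is
\begin{align*}
F(t,\bx):=\phi(s(\bx))\,(\partial_t\eta)(t,\by(\bx))\,\bn(\by(\bx)) \quad\text{on }\Omega\cup S_\ell,\qquad \widetilde{\bv}(t,\bx):=F(t,\bm{\Psi}_\eta^{-1}(t,\bx)),
\end{align*}
followed by a Bogovskii-type correction supported in the interior of $\Oeta$ to enforce $\divx\bv=0$ without altering the boundary trace. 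Since $\divx\widetilde{\bv}$ is one order lower in regularity than $\widetilde{\bv}$ itself, this correction is harmless for the $L^2_tW^{2,2}_\bx$- and $W^{1,2}_tL^2_\bx$-regularity.

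Setting $\tilde{\bu}:=\bu-\bv$, the pair $(\tilde{\bu},p)$ solves the Stokes system
\begin{align*}
\partial_t\tilde{\bu}-\delx\tilde{\bu}+\nabx p&=\bff-\partial_t\bv+\delx\bv \quad\text{in }I\times\Oeta,\\
\divx\tilde{\bu}&=0 \quad\text{in }I\times\Oeta,\\
\tilde{\bu}\circ\bm{\varphi}_\eta&=\mathbf{0} \quad\text{on }I\times\omega,\\
\tilde{\bu}(0)&=\bu_0-\bv(0)\in W^{1,2}_{\divx}(\Omega_{\eta_0}),
\end{align*}
i.e.\ exactly the homogeneous moving-boundary Stokes problem. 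Invoking \cite[Theorem~3.1]{breit2025partial} then furnishes existence, uniqueness, and the maximal regularity bound
\begin{align*}
\int_I\big(\|\partial_t\tilde{\bu}\|_{L^2(\Oeta)}^2+\|\tilde{\bu}\|_{W^{2,2}(\Oeta)}^2+\|p\|_{W^{1,2}(\Oeta)}^2\big)\dt\lesssim\|\bu_0-\bv(0)\|_{W^{1,2}(\Omega_{\eta_0})}^2+\int_I\|\bff-\partial_t\bv+\delx\bv\|_{L^2(\Oeta)}^2\dt.
\end{align*}
Recovering $\bu=\tilde{\bu}+\bv$ and applying the triangle inequality finishes the proof, provided that the right-hand-side norms of $\bv$ match the explicit terms appearing in Theorem~\ref{thm:stokeInhom}.

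The crux is therefore the quantitative estimate for $\bv$. The chain rule gives
\begin{align*}
\partial_t\bv&=(\partial_t F)\circ\bm{\Psi}_\eta^{-1}+\big((\nabx F)\circ\bm{\Psi}_\eta^{-1}\big)\partial_t\bm{\Psi}_\eta^{-1},\\
\nabx^2\bv&=\big((\nabx^2 F)\circ\bm{\Psi}_\eta^{-1}\big)(\nabx\bm{\Psi}_\eta^{-1})^2+\big((\nabx F)\circ\bm{\Psi}_\eta^{-1}\big)\nabx^2\bm{\Psi}_\eta^{-1}.
\end{align*}
Since $F$ depends linearly on $\partial_t\eta$ while $\bm{\Psi}_\eta^{\pm 1}$ obeys \eqref{210and212}--\eqref{218}, $\partial_t F$ contributes the $\|\partial_t^2\eta\|_{L^2(\omega)}^2$ term, the product $\big((\nabx F)\circ\bm{\Psi}_\eta^{-1}\big)\partial_t\bm{\Psi}_\eta^{-1}$ contributes $\||\partial_t\eta|^2\|_{L^2(\omega)}^2$, and the second-derivative pieces together with $\nabx\bm{\Psi}_\eta^{-1}\sim 1+\naby\eta$ produce exactly $\|\partial_t\eta\|_{W^{2,2}(\omega)}^2$, $\|\naby\partial_t\eta\,\naby\eta\|_{L^2(\omega)}^2$, $\|\partial_t\eta\,\naby^2\eta\|_{L^2(\omega)}^2$ and $\|\partial_t\eta\,|\naby\eta|^2\|_{L^2(\omega)}^2$. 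The initial term $\|\bv(0)\|_{W^{1,2}(\Omega_{\eta_0})}^2$ is absorbed into $\|\eta_\star\|_{W^{1,2}(\omega)}^2$ by the same reasoning at $t=0$. The principal obstacle in executing this plan is the careful justification of these product estimates via Sobolev embeddings on $\omega\subset\R^2$ in combination with \eqref{210and212}--\eqref{218}, and the construction of the Bogovskii correction on the moving domain $\Oeta$ in a way that preserves the required space–time regularity. Uniqueness of $(\bu,p)$ then follows by applying the resulting estimate to the difference of two solutions with vanishing data.
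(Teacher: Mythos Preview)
Your overall strategy coincides with the paper's: construct a solenoidal lift of the boundary datum, subtract it to reduce to the homogeneous Stokes problem on the moving domain, apply \cite[Theorem~3.1]{breit2025partial} (the paper's Theorem~\ref{thm:stokeHom}), and recover $\bu$ by the triangle inequality. The difference lies in how the lift is obtained. The paper does not build it by hand via the Hanzawa transform plus a Bogovskii correction; instead it invokes directly the solenoidal extension operator $\Testzeta$ of Proposition~\ref{thm:extension} (taken from \cite{muha2019existence}), setting $\Phi:=\Testzeta(\partial_t\eta\,\bn)$. That proposition already delivers $\divx\Phi=0$, the correct trace, and the precise product-form bounds for $\|\partial_t\Phi\|_{L^2}$ and $\|\Phi\|_{W^{2,2}}$ that produce exactly the six $\eta$-terms on the right-hand side of the theorem; no separate chain-rule computation or Bogovskii step is needed.

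Your route is conceptually equivalent but more laborious, and the step you flag as the ``principal obstacle'' is real: a Bogovskii correction carried out on the time-dependent domain $\Oeta$ does not obviously commute with $\partial_t$, so getting $\partial_t\bv\in L^2(I;L^2(\Oeta))$ requires extra care (e.g.\ transforming back to the reference domain before correcting). The extension operator in Proposition~\ref{thm:extension} is built on the fixed set $\Omega\cup S_\ell$, which is precisely why its time-regularity estimates are clean. So your plan is sound, but if you have Proposition~\ref{thm:extension} available you should use it and avoid re-deriving it.
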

Theorem \ref{thm:stokeInhom} extend the following 
maximal regularity result \cite[Theorem 3.1]{breit2025partial} for the Stokes system in a moving domain but with zero boundary condition.
\begin{theorem}\label{thm:stokeHom}
Let $(\bff, \eta_0,    \bu_0, \eta)$
be a dataset such that
\begin{align*}
&\bff \in L^2(I\times\Oeta), \qquad
\eta(0,\cdot)=\eta_0(\cdot)\in W^{3,2}(\omega),
\\& 
\bu(0,\cdot)=\bu_0(\cdot)\in W^{1,2}_{0,\divx}(\Omega_{\eta_0}),
\\&
\text{with }\bu_0 \circ \bm{\varphi}_{\eta_0} =\bm{0} \text{   on } \omega.
\end{align*} 
Suppose further that
\begin{align*}
\eta\in C(\overline{I}; B_{\varrho,2}^\theta(\omega))
\cap
 C(\overline{I};W^{2,2}(\omega))
\cap 
 C(\overline{I};C^{1}(\omega))
\cap
W^{1,3}(I;W^{3,2}(\omega))
\end{align*} 
with $\varrho\geq4$ and $\theta\in(\tfrac{3}{2}, \tfrac{3}{2}+\tfrac{3}{\varrho}]$, and  there exists constants $\ell,\kappa_0>0$ such that
\begin{align*}
\Vert\eta\Vert_{L^\infty(I\times\omega)}\leq \ell<L\qquad
\inf_{I\times\omega}\vert\partial_1\bm{\varphi}_\eta\times \partial_2 \bm{\varphi}_\eta\vert \geq \kappa_0.
\end{align*} 
%
Then there exist a unique solution to
\begin{align*}
&\partial_t \bu   
= 
\delx \bu -\nabx p
+\bff 
&\text{ in } I\times\Oeta,
\\
&\divx\bu=0
&\text{ in } I\times\Oeta,
\\&
\bu\circ\bm{\varphi}_\eta =\bm{0}
&\text{ on } I\times\omega,
\\&
\bu(0)=\bu_0
&\text{ in }  \Omega_{\eta_0}
\end{align*}
such that
\begin{align*}
\int_I\big(\Vert \partial_t\bu\Vert_{L^2(\Oeta)}^2+\Vert  \bu\Vert_{W^{2,2}(\Oeta)}^2
+
\Vert p\Vert_{W^{1,2}(\Oeta)}^2\big)\dt
\lesssim
\Vert\bu_0\Vert_{W^{1,2}(\Omega_{\eta_0})}^2
+
\int_I\Vert\bff\Vert_{L^2(\Oeta)}^2.
\end{align*}
\end{theorem}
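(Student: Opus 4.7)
The plan is to pull the problem back to the fixed reference domain $\Omega$ via the Hanzawa transform $\bm{\Psi}_\eta$, view the resulting system as a perturbation of a generalized stationary Stokes problem on $\Omega$, and then combine an $L^2$-energy estimate at the level of $\partial_t\bu$ with stationary elliptic (Cattabriga-type) regularity for the transformed Stokes operator. The whole point of transforming is that $\partial_t\hat\bu$ then becomes an \emph{admissible} test function because the Dirichlet boundary is fixed; on the original moving domain this test fails.

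First, I would introduce the Piola-type transform $\hat\bu(t,\bx):=\det(\nabx\bm{\Psi}_\eta)\,(\nabx\bm{\Psi}_\eta)^{-1}\,\bu(t,\bm{\Psi}_\eta(t,\bx))$ and $\hat p:=p\circ\bm{\Psi}_\eta$. This choice is tailored so that $\hat\bu\vert_\omega=\mathbf{0}$ and, by the Piola identity, $\divx\bu=0$ on $\Oeta$ translates to $\divx\hat\bu=0$ on $\Omega$. Using $\partial_t(\bu\circ\bm{\Psi}_\eta)=(\partial_t\bu)\circ\bm{\Psi}_\eta+((\partial_t\bm{\Psi}_\eta)\cdot\nabx\bu)\circ\bm{\Psi}_\eta$ and the chain rule, the system rewrites on $I\times\Omega$ as
\begin{align*}
\partial_t\hat\bu-A_\eta\hat\bu+B_\eta\hat p=\hat\bff+R_\eta(\hat\bu),\qquad \divx\hat\bu=0,\qquad \hat\bu\vert_\omega=\mathbf{0},
\end{align*}
where $A_\eta$ is a uniformly elliptic second-order operator whose coefficients are rational expressions in $\nabx\bm{\Psi}_\eta$, $B_\eta$ is a gradient-type first-order operator of the same structure, and $R_\eta(\hat\bu)$ is a first-order convective remainder proportional to $\partial_t\bm{\Psi}_\eta$. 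By \eqref{210and212}--\eqref{218} the spatial coefficients of $A_\eta,B_\eta$ lie in $C(\overline I;W^{2,2}(\Omega))\hookrightarrow C(\overline I;C^0(\Omega))$, while $\partial_t\bm{\Psi}_\eta\in L^3(I;W^{3,2}(\Omega))\hookrightarrow L^3(I;L^\infty(\Omega))$.

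Second, I would derive the key estimate by testing the transformed momentum equation with $\partial_t\hat\bu$; the pressure term drops by $\divx\hat\bu=0$, and the principal part yields
\begin{align*}
\tfrac{1}{2}\tfrac{d}{dt}\!\int_\Omega A_\eta\nabx\hat\bu:\nabx\hat\bu\,\dx+\Vert\partial_t\hat\bu\Vert_{L^2(\Omega)}^2
\end{align*}
modulo a commutator bounded by $\Vert\partial_t\bm{\Psi}_\eta\Vert_{L^\infty_x}\Vert\nabx\hat\bu\Vert_{L^2}^2$, whose time integral is finite since $\partial_t\bm{\Psi}_\eta\in L^3_t L^\infty_x$. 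The remainder $R_\eta(\hat\bu)$ is absorbed into $\Vert\partial_t\hat\bu\Vert_{L^2}^2$ by Young's inequality, and a Gr\"onwall step produces $\nabx\hat\bu\in L^\infty(I;L^2(\Omega))$ and $\partial_t\hat\bu\in L^2(I\times\Omega)$ with the asserted bound in terms of $\Vert\bu_0\Vert_{W^{1,2}}$ and $\Vert\bff\Vert_{L^2}$. To upgrade to $W^{2,2}$ and recover the pressure, I would freeze $t$ and view the system as a stationary generalized Stokes problem on the $W^{3,2}$-domain $\Omega$ with right-hand side $\hat\bff-\partial_t\hat\bu+R_\eta(\hat\bu)\in L^2(\Omega)$ and coefficients in $W^{2,2}(\Omega)$; classical Cattabriga/Amrouche--Girault theory yields
\begin{align*}
\Vert\hat\bu(t)\Vert_{W^{2,2}(\Omega)}+\Vert\hat p(t)\Vert_{W^{1,2}(\Omega)}\lesssim\Vert\hat\bff(t)-\partial_t\hat\bu(t)+R_\eta(\hat\bu)(t)\Vert_{L^2(\Omega)}+\Vert\hat\bu(t)\Vert_{W^{1,2}(\Omega)},
\end{align*}
and squaring and integrating in time, combined with the previous bound, closes the required maximal-regularity estimate. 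Existence itself is produced by first mollifying $\eta$ in space-time and solving the smooth problem by a divergence-free Galerkin scheme on $\Omega$, obtaining uniform bounds from the argument above, and passing to the limit; uniqueness follows from linearity and the energy identity. Pushing the final estimates forward through $\bm{\Psi}_\eta$ by means of \eqref{210and212} transfers the statement back to $(\bu,p)$ on $\Oeta$.

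The hard part is rigorously justifying these formal manipulations in the stated regularity class. The Piola identity needs $\bm{\Psi}_\eta$ to be a genuine $W^{3,2}$-diffeomorphism; this is exactly ensured by $\Vert\eta\Vert_{L^\infty}\leq\ell<L$ together with the lower bound $\inf|\partial_1\bm{\varphi}_\eta\times\partial_2\bm{\varphi}_\eta|\geq\kappa_0$. The sharp time-integrability hypothesis $\eta\in W^{1,3}(I;W^{3,2}(\omega))$ is calibrated precisely so that $\partial_t A_\eta$ and the coefficient of $R_\eta$ sit in $L^3_t L^\infty_x$, the exact threshold at which the $\partial_t\hat\bu$-testing argument closes without requiring any smallness of $T$; any weaker time-integrability would force a short-time contraction scheme. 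Similarly $\eta\in C(\overline I;B^\theta_{\varrho,2}(\omega))$ with $\theta>3/2$ furnishes the $W^{2,2}$-in-space coefficient regularity needed for Cattabriga's estimate. The pressure, always the most delicate object in moving-domain Stokes analysis, is recovered \emph{stationarily at each time} through the generalized Stokes estimate above, thereby bypassing any need to invert a time-dependent Bogovskii operator on the moving domain.
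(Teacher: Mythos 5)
The first thing to say is that the paper does not prove this statement at all: Theorem~3.3 is quoted verbatim from the literature (it is \cite[Theorem 3.1]{breit2025partial}, with a remark explaining why $\bu_0\in W^{1,2}_{0,\divx}$ suffices for the initial datum when $r=p=2$), and the present paper only \emph{uses} it as a black box to derive Theorem~3.2. So there is no in-paper proof to compare against; what you have written is an attempt at a from-scratch proof of an imported result that the author describes as a recent breakthrough. Your overall strategy --- Piola pull-back to the reference domain, testing with $\partial_t\hat\bu$, stationary variable-coefficient Stokes regularity at frozen times, Galerkin plus mollification of $\eta$ for existence --- is indeed the standard architecture for such results, and several of your observations (why the fixed boundary makes $\partial_t\hat\bu$ admissible, why $W^{1,3}(I;W^{3,2}(\omega))$ is the right time-integrability for the Gr\"onwall step) are on target.

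However, the sketch has concrete gaps precisely where the real work lies. First, the Piola transform does not merely produce a ``uniformly elliptic second-order operator with coefficients that are rational expressions in $\nabx\bm{\Psi}_\eta$'': differentiating the Piola factor $\det(\nabx\bm{\Psi}_\eta)(\nabx\bm{\Psi}_\eta)^{-1}$ twice inside $\delx$ generates zeroth- and first-order terms in $\hat\bu$ whose coefficients involve $\nabx^3\bm{\Psi}_\eta$, which under the stated hypotheses is only in $L^2(\Omega)$ in space. Testing such a term against $\partial_t\hat\bu\in L^2$ forces an $L^\infty$ bound on $\hat\bu$ (or an interpolation $\|\hat\bu\|_{L^\infty}\lesssim\|\hat\bu\|_{W^{1,2}}^{1/2}\|\hat\bu\|_{W^{2,2}}^{1/2}$ that feeds back into the $W^{2,2}$ estimate you are trying to prove), and this absorption loop is not addressed. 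Second, ``Cattabriga/Amrouche--Girault'' is a constant-coefficient theory; for the transformed operator you need a freezing-of-coefficients/localization argument whose commutator errors must themselves be absorbed, and this is where the Besov hypothesis $\theta>\tfrac32$ actually enters (it gives H\"older continuity of $\nabx\bm{\Psi}_\eta$, not the ``$W^{2,2}$-in-space coefficient regularity'' you attribute to it --- that comes from the separate hypothesis $W^{1,3}(I;W^{3,2}(\omega))\hookrightarrow C(\overline I;W^{3,2}(\omega))$). Third, the assertion that ``the pressure term drops by $\divx\hat\bu=0$'' is only correct if the momentum equation is kept in the weak form tested against Piola push-forwards of solenoidal fields; in the strong transformed form the pressure appears as $\mathbb B_\eta\nabx\hat p$ with $\mathbb B_\eta\neq\mathbb I$ and does not pair to zero with $\partial_t\hat\bu$. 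None of these is necessarily fatal --- they are all handled in the cited works --- but as written the proposal is an outline of the known strategy rather than a proof, and it is not a substitute for the citation the paper actually relies on.
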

\begin{remark}
Note that the restrictive choice of $W^{2,p}\cap W^{1,p}_{\divx} $ for the initial condition $\bu_0$ in \cite[
Theorem 3.1]{breit2022regularity} is just a simplification to avoid the usual but complicated choice of $\bu_0\in (L^p,W^{2,p})_{1-\frac{1}{r},r}\cap L^p_{\divx}$ that is required for the theorem to hold true. Here, $(L^p,W^{2,p})_{1-\frac{1}{r},r}=B^{2(1-1/r)}_{p,r}$ is an interpolation space. In the particular case where $r=p=2$  in \cite[Theorem 3.1]{breit2022regularity}, a careful inspection of the proof shows that $\bu_0\in  W^{1,2}_{0,\divx}(\Omega_{\eta_0})$ is sufficient for the proof as expected since  $B^{2(1-1/r)}_{p,r}=W^{1,2}$ when $r=p=2$.
\end{remark}
The key to the proof of Theorem \ref{thm:stokeInhom} is the  combination of Theorem \ref{thm:stokeHom} with the following proposition (see \cite[Proposition 3.3]{muha2019existence}) concerning solenoidal extensions on moving domains and their properties.
\begin{proposition}
\label{thm:extension}
For a given $\eta\in L^\infty(I;W^{1,2}( \omega ))$ with $\|\eta\|_{L^\infty(I\times\omega)}\leq\ell<L$, there are linear operators
\begin{align*}
\mathscr K_\eta:L^1( \omega )\rightarrow\mathbb R,\quad
\Testzeta:\{\xi\in L^1(I;W^{1,1}( \omega )):\,\mathscr K_\eta(\xi)=0\}\rightarrow L^1(I;W^{1,1}_{\Div}(\Omega\cup S_{\ell} )),
\end{align*}
such that the tuple $(\Testzeta(\xi-\mathscr K_\eta(\xi)),\xi-\mathscr K_\eta(\xi))$ satisfies
\begin{align*}
\Testzeta(\xi-\mathscr K_\eta(\xi))&\in L^\infty(I;L^2(\Omega_\eta))\cap L^2(I;W^{1,2}_{\Div}(\Omega_\eta)),
\\
\xi-\mathscr K_\eta(\xi)&\in L^\infty(I;W^{2,2}( \omega ))\cap  W^{1,\infty}(I;L^{2}( \omega )),
\\
\partial_t(\Testzeta&(\xi-\mathscr K_\eta(\xi))) \in L^2(I;L^2(\Omega_\eta)),
\\
  (\Testzeta(\xi- \mathscr K_\eta&(\xi)))\circ \bm{\varphi}_\eta =(\xi-\mathscr K_\eta(\xi))\bn,
\quad\text{ on } I\times\omega,
\\
\Testzeta(\xi-\mathscr K_\eta&(\xi))(t,\bx)=0 \text{ for } (t,\bx)\in I \times (\Omega \setminus S_{\ell})
\end{align*}
provided that we have $\xi\in L^\infty(I;W^{2,2}( \omega ))\cap  W^{1,\infty}(I;L^{2}(\omega))$.
In particular, we have the estimates 
\begin{align*}
\|\Testzeta(\xi-\mathscr K_\eta(\xi))\|_{L^q(I;W^{1,p}(\Omega \cup S_{\ell}  ))}
&\lesssim \|\xi\|_{L^q(I;W^{1,p}( \omega ))}+\|\xi\naby \eta\|_{L^q(I;L^{p}( \omega ))},
\\ 
\|\partial_t\Testzeta(\xi-\mathscr K_\eta(\xi))\|_{L^q(I;L^{p}( \Omega\cup S_{\ell}))}
&\lesssim \|\partial_t\xi\|_{L^q(I;L^{p}( \omega ))}+\|\xi\partial_t \eta\|_{L^q(I;L^{p}( \omega ))},
\\
\|\Testzeta(\xi-\mathscr K_\eta(\xi))\|_{L^q(I;W^{2,p}(\Omega \cup S_{\ell}  ))}
&\lesssim \|\xi\|_{L^q(I;W^{2,p}( \omega ))}+\|\xi\naby^2 \eta\|_{L^q(I;L^{p}( \omega ))}
\\
&\quad+\|  \naby\xi\,\naby \eta\|_{L^q(I;L^{p}( \omega ))}+\|\xi \,|\naby \eta|^2\|_{L^q(I;L^{p}( \omega ))},
\\ 
\|\partial_t\Testzeta(\xi-\mathscr K_\eta(\xi))\|_{L^q(I;W^{1,p}( \Omega\cup S_{\ell}))}
&\lesssim \|\partial_t\xi\|_{L^q(I;W^{1,p}( \omega ))}+\|\xi\partial_t \naby \eta\|_{L^q(I;L^{p}( \omega ))}
\\
&\quad+\|  \partial_t \xi\,\naby \eta \|_{L^q(I;L^{p}( \omega ))}+\|\naby \xi\,\partial_t \eta \|_{L^q(I;L^{p}( \omega ))}
\\
&\quad+\|\xi\,\partial_t\eta\,\naby \eta|\|_{L^q(I;L^{p}( \omega ))},
\end{align*}
for any $p\in (1,\infty),q\in[1,\infty]$.
\end{proposition}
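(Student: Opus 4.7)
The plan is to define $\mathscr K_\eta$ via the divergence-theorem compatibility condition and construct $\Testzeta$ by extending along the moving normals and correcting with the Bogovskij operator.

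By Gauss's theorem, any $\bv$ with $\divx \bv = 0$ in $\Oeta$ and $\bv\circ\bm{\varphi}_\eta = \xi\bn$ on $\omega$ forces $\int_\omega \xi\,\bn\cdot\det(\naby\bm{\varphi}_\eta)\dd\by = 0$. This motivates
\[
\mathscr K_\eta(\xi) := \frac{\int_\omega \xi\,\bn\cdot\det(\naby\bm{\varphi}_\eta)\dd\by}{\int_\omega \bn\cdot\det(\naby\bm{\varphi}_\eta)\dd\by},
\]
which, by \eqref{eq:1705}, is a bounded linear functional on $L^1(\omega)$ since the denominator is bounded below by a $\kappa_0$-type constant. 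The renormalised datum $\widehat\xi:=\xi - \mathscr K_\eta(\xi)$ is then compatible in the sense that its flux through the moving boundary vanishes.

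For the extension itself, I would parametrise a thin layer around $\partial\Oeta$ by the Hanzawa-like coordinates $(\by, s)\in\omega\times(-L,L)$ via $\bx(\by, s) = \bm{\varphi}(\by) + [s+\eta(t,\by)\phi(s)]\bn(\by)$, so that $s=0$ corresponds to $\partial\Oeta$. Fix $\chi\in C_c^\infty(\mathbb R)$ with $\chi(0)=1$ and sufficiently small support, and define
\[
\tilde{\bv}_0(t, \bx(\by, s)) := \widehat\xi(t,\by)\,\chi(s)\,\bn(\by),
\]
extended by zero in the rest of $\Omega\cup S_\ell$. Then $\tilde{\bv}_0$ has trace $\widehat\xi\,\bn$ on $\partial\Oeta$ but fails to be solenoidal; however, the compatibility of $\widehat\xi$ guarantees $\int_{\Omega\cup S_\ell}\divx\tilde{\bv}_0\dx = 0$, so the Bogovskij operator on the fixed Lipschitz domain $\Omega\cup S_\ell$ yields $\tilde{\bv}_1$ with $\divx\tilde{\bv}_1 = -\divx\tilde{\bv}_0$ and vanishing trace on $\partial(\Omega\cup S_\ell)$. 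Set $\Testzeta(\widehat\xi) := \tilde{\bv}_0 + \tilde{\bv}_1$; this is divergence-free on $\Omega\cup S_\ell$, has the correct trace on $\partial\Oeta$, and is supported in $S_\ell$.

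The four estimates then follow by a Leibniz expansion. Each $\bx$-derivative of $\tilde{\bv}_0$ passes, via the chain rule for the change of coordinates $\bx\mapsto(\by,s)$, into a product of derivatives of $\widehat\xi$, derivatives of $\eta$, and fixed reference data, the $\eta$-contributions being controlled through \eqref{210and212}--\eqref{218}: one $\bx$-derivative yields either $\naby\widehat\xi$ or $\widehat\xi\,\naby\eta$; two derivatives yield the triple $\widehat\xi\,\naby^2\eta$, $\naby\widehat\xi\,\naby\eta$, and $\widehat\xi\,|\naby\eta|^2$; a time derivative distributes via $\partial_t\eta$ in accordance with \eqref{218}. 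The Bogovskij correction only contributes lower-order terms since $\divx\tilde{\bv}_0$ is one derivative less regular than $\tilde{\bv}_0$. The main obstacle will be bookkeeping for the time derivatives: one must verify that the time dependence of $\mathscr K_\eta$ (through $\bm{\varphi}_\eta$) and of the coordinate map $(\by,s)\mapsto\bx$ together contribute only products of the permitted form $\xi\,\partial_t\eta$, $\naby\xi\,\partial_t\eta$, and $\xi\,\naby\eta\,\partial_t\eta$ when computing $\partial_t\Testzeta$, matching the right-hand sides exactly and producing no additional top-order terms.
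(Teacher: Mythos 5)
First, a point of reference: the paper does not prove this proposition at all --- it is quoted verbatim from \cite[Proposition 3.3]{muha2019existence}, so your proposal can only be compared against that external construction. Your two basic ingredients (the flux--normalising functional $\mathscr K_\eta$ and a normal--fiber extension in Hanzawa coordinates followed by a divergence correction) are indeed the right ones and match the cited proof in spirit. However, as written the construction has a genuine gap at the correction step. If you solve $\divx\tilde{\bv}_1=-\divx\tilde{\bv}_0$ with the Bogovskij operator on the \emph{whole} fixed domain $\Omega\cup S_\ell$, with zero trace only on $\partial(\Omega\cup S_\ell)$, then $\tilde{\bv}_1$ is in general nonzero both on the interior surface $\partial\Oeta$ and on the core $\Omega\setminus S_\ell$; consequently $\tilde{\bv}_0+\tilde{\bv}_1$ loses the interface condition $(\Testzeta(\widehat\xi))\circ\bm{\varphi}_\eta=\widehat\xi\,\bn$ and the support condition $\Testzeta(\widehat\xi)=0$ on $\Omega\setminus S_\ell$, which are the two properties the whole proposition exists to deliver. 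Relatedly, your justification for solvability is misplaced: $\int_{\Omega\cup S_\ell}\divx\tilde{\bv}_0\dx=0$ holds \emph{automatically} for any compactly supported field and has nothing to do with $\mathscr K_\eta(\widehat\xi)=0$. The compatibility condition is needed elsewhere: the correction must be performed \emph{separately} on $\Oeta$ (minus the core) and on the outer collar $(\Omega\cup S_\ell)\setminus\overline{\Oeta}$, each with homogeneous data on its own full boundary so that the trace on $\partial\Oeta$ is untouched, and it is precisely the vanishing of $\int_\omega\widehat\xi\,\bn\cdot(\partial_{y_1}\bm{\varphi}_\eta\times\partial_{y_2}\bm{\varphi}_\eta)\dy$ that gives the zero--mean condition for $\divx\tilde{\bv}_0$ on each of these two pieces individually.

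Two further points you wave at but do not resolve. The Bogovskij correction is \emph{not} of lower order: Bogovskij gains exactly the derivative that $\divx$ loses, so $\tilde{\bv}_1$ has the same Sobolev order as $\tilde{\bv}_0$, and for the $W^{2,p}$ and $\partial_t W^{1,p}$ estimates one must control the Bogovskij operator on the \emph{moving, merely $W^{1,2}$-regular} domain $\Oeta$ uniformly in $t$ (uniform John-domain constants under $\|\eta\|_{L^\infty}\leq\ell<L$), as well as the time derivative of the operator itself, since the domain on which it acts moves with $\partial_t\eta$. These are exactly the delicate steps of the cited proof, and they are where the specific right-hand sides $\|\xi\,\partial_t\eta\,\naby\eta\|$, $\|\naby\xi\,\partial_t\eta\|$, etc.\ come from; the Leibniz bookkeeping for $\tilde{\bv}_0$ alone does not produce them in full.
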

\begin{remark}
The operator $\mathscr K_\eta$ is a \textit{correction} operator that is applied to functions defined on the boundary that do not allow for a solenoidal extension.
If $\xi$ is the trace of a solenoidal vector field $\bu$ in $ \Oeta$, however, then $\mathscr K_\eta(\xi)=0$ (see \cite[Lemma 6.3]{muha2019existence} for more details) 
and the difference $ \bu-\Testzeta(\xi)$
also has zero trace on $\partial\Omega_\eta$. 
\end{remark}

%

\begin{proof}[Proof of Theorem \ref{thm:stokeInhom}]
By Proposition \ref{thm:extension}, there exists an extension $$\Phi:= \Testzeta(\partial_t\eta\bn )$$ such that
\begin{align*}
&\divx\Phi =0 &\text{in } I\times \Oeta,
\\
&\Phi\circ\bm{\varphi}_\eta =\partial_t\eta\,\bn &\text{on }\overline{ I}\times\omega
\end{align*}
and
\begin{equation}
\begin{aligned}
\label{phibdd} 
\int_I\big(&
\|\partial_t\Phi\|_{ L^{2}( \Oeta)}^2
+
\|\Phi\|_{W^{2,2}(\Oeta )}^2
\big)\dt
\\&\lesssim
\int_I
\big(
 \|\partial_t^2\eta\bn\|_{L^{2}( \omega )}^2
 +
 \| |\partial_t \eta|^2\bn\|_{L^{2}( \omega )}^2
+
\|\partial_t\eta\bn\|_{W^{2,2}( \omega )}^2
+
\|\partial_t\eta\,\naby^2 \eta\,\bn\|_{L^{2}( \omega )}^2
\big)\dt
\\&\qquad+
\int_I
\big(
 \|\naby(\partial_t \eta\bn)\naby \eta\|_{L^{2}( \omega )}^2 
+
\|\partial_t\eta\,|\naby \eta|^2\bn\|_{L^{2}( \omega )}^2
\big)\dt.
\end{aligned}
\end{equation} 
Recall from Section \ref{sec:setup} that $\bn$ is a smooth unit vector.

Now set $\bw:=\bu-\Phi$ so that \eqref{stokeInhom} is equivalent to
\begin{equation}
\begin{aligned}
\label{stokeInhomX}
&\partial_t \bw   
= 
\delx \bw -\nabx p
+\bff 
-\partial_t\Phi
+\delx \Phi
&\text{ in } I\times\Oeta,
\\
&\divx\bw=0
&\text{ in } I\times\Oeta,
\\&
\bw\circ\bm{\varphi}_\eta=\bm{0}
&\text{ on } I\times\omega,
\\&
\bw(0)=\bu_0-\Phi(0)
&\text{ in }  \Omega_{\eta_0}.
\end{aligned}
\end{equation}
If we now apply Theorem \ref{thm:stokeHom} to \eqref{stokeInhomX}, we obtain a unique solution satisfying
\begin{align*}
\int_I\big(\Vert \partial_t\bw\Vert_{L^2(\Oeta)}^2+\Vert  \bw\Vert_{W^{2,2}(\Oeta)}^2
+&
\Vert p\Vert_{W^{1,2}(\Oeta)}^2\big)\dt
\\
\lesssim&
\Vert\bu_0\Vert_{W^{1,2}(\Omega_{\eta_0})}^2
+
\Vert\eta_\star\Vert_{W^{1,2}(\omega )}^2
\\&+
\int_I\big(\Vert\bff \Vert_{L^2(\Oeta)}^2
+
\Vert\partial_t\Phi\Vert_{L^2(\Oeta)}^2
+
\Vert \Phi\Vert_{W^{2,2}(\Oeta)}^2\Big)\dt
\end{align*} 
and by using the inequality
\begin{align*}
|x-y|^2\geq (|x|-|y|)^2=|x|^2+|y|^2-2|x|\,|y|
,
\end{align*}
we obtain
\begin{align*}
\int_I\big(\Vert \partial_t\bw\Vert_{L^2(\Oeta)}^2+\Vert  \bw\Vert_{W^{2,2}(\Oeta)}^2 \big)\dt
\geq &
\int_I\big(\Vert \partial_t\bu\Vert_{L^2(\Oeta)}^2+\Vert  \bu\Vert_{W^{2,2}(\Oeta)}^2 \big)\dt
\\&
+
\int_I\big(\Vert \partial_t\Phi\Vert_{L^2(\Oeta)}^2+\Vert  \Phi\Vert_{W^{2,2}(\Oeta)}^2 \big)\dt
\\&
-2
\int_I \Vert \partial_t\Phi\Vert_{L^2(\Oeta)}\Vert \partial_t\bu\Vert_{L^2(\Oeta)} \dt
\\&
-2
\int_I \Vert  \Phi\Vert_{W^{2,2}(\Oeta)}\Vert  \bu\Vert_{W^{2,2}(\Oeta)}
\dt.
\end{align*}
Consequently, it follows that
\begin{align*} 
\int_I&\big(\Vert \partial_t\Phi\Vert_{L^2(\Oeta)}^2+\Vert  \Phi\Vert_{W^{2,2}(\Oeta)}^2 \big)\dt
+
\int_I\big(\Vert \partial_t\bu\Vert_{L^2(\Oeta)}^2+\Vert  \bu\Vert_{W^{2,2}(\Oeta)}^2 \big)\dt
\\
\lesssim&
\Vert\bu_0\Vert_{W^{1,2}(\Omega_{\eta_0})}^2
+
\Vert\eta_\star\Vert_{W^{1,2}(\omega )}^2
+
\int_I\big(\Vert\bff \Vert_{L^2(\Oeta)}^2
+
\Vert\partial_t\Phi\Vert_{L^2(\Oeta)}^2
+
\Vert \Phi\Vert_{W^{2,2}(\Oeta)}^2\Big)\dt
\\&+
\int_I\big(\Vert \partial_t\Phi\Vert_{L^2(\Oeta)}\Vert \partial_t\bu\Vert_{L^2(\Oeta)}+\Vert  \Phi\Vert_{W^{2,2}(\Oeta)}\Vert  \bu\Vert_{W^{2,2}(\Oeta)} \big)\dt.
\end{align*}
We can now drop the $\Phi$ terms on the left-hand side and apply Young's inequality to the last term on the right-hand side as follows:
\begin{align*}
\int_I\big(\Vert \partial_t\Phi\Vert_{L^2(\Oeta)}\Vert \partial_t\bu\Vert_{L^2(\Oeta)}+&\Vert  \Phi\Vert_{W^{2,2}(\Oeta)}\Vert  \bu\Vert_{W^{2,2}(\Oeta)} \big)\dt
\\
\leq
\delta&
\int_I\big(\Vert \partial_t\bu\Vert_{L^2(\Oeta)}^2+\Vert  \bu\Vert_{W^{2,2}(\Oeta)}^2 \big)\dt 
\\&+
c(\delta)
\int_I\big(\Vert \partial_t\Phi\Vert_{L^2(\Oeta)}^2+\Vert  \Phi\Vert_{W^{2,2}(\Oeta)}^2 \big)\dt
\end{align*}
for any $\delta>0$. We can now choose $\delta$ small and absorb this $\delta$-term back into the left-hand side to obtain
\begin{align*}  
\int_I\big(\Vert \partial_t\bu\Vert_{L^2(\Oeta)}^2+\Vert  \bu\Vert_{W^{2,2}(\Oeta)}^2 \big)\dt 
\lesssim&
\Vert\bu_0\Vert_{W^{2,2}(\Omega_{\eta_0})}^2
+
\Vert\eta_\star\Vert_{W^{1,2}(\omega )}^2
\\&+
\int_I\big(\Vert\bff \Vert_{L^2(\Oeta)}^2
+
\Vert\partial_t\Phi\Vert_{L^2(\Oeta)}^2
+
\Vert \Phi\Vert_{W^{2,2}(\Oeta)}^2\Big)\dt.
\end{align*}
However, by \eqref{phibdd},
\begin{align*}
\int_I\big(
\Vert\partial_t\Phi\Vert_{L^2(\Oeta)}^2
+&
\Vert \Phi\Vert_{W^{2,2}(\Oeta)}^2\Big)\dt
\\&\lesssim
\int_I
\big(
 \|\partial_t^2\eta\|_{L^{2}( \omega )}^2
 +
 \| |\partial_t \eta|^2\|_{L^{2}( \omega )}^2
+
\|\partial_t\eta\|_{W^{2,2}( \omega )}^2
+
\|\partial_t\eta\,\naby^2 \eta\|_{L^{2}( \omega )}^2
\big)\dt
\\&\qquad+
\int_I
\big(
 \|\naby\partial_t \eta\naby \eta\|_{L^{2}( \omega )}^2 
+
\|\partial_t\eta\,|\naby \eta|^2\|_{L^{2}( \omega )}^2
\big)\dt
\end{align*}
hold. This finishes the proof.
\end{proof}

In analogy with the standard maximal regularity result for the Stokes system on fixed domains, one can also obtain higher spatial regularity for our moving domain. For the purpose of our work, we are only interested in improving the spatial regularity by two factor of differentiability. For this, we now set $\bm{\Phi}_\eta:=\bm{\Psi}_\eta\circ\bm{\varphi}$ so that the restriction of $\bm{\Phi}_\eta$ to the boundary $\omega$ is $\bm{\varphi}_\eta$, recall \eqref{bdd3trans}. If we take the partial derivatives $\partial_{x_i}\partial_{x_j}$ of $\bu\circ\bm{\Phi}_\eta$, then by the chain rule
\begin{align*}
\partial_{x_i}\partial_{x_j}(\bu\circ\bm{\Phi}_\eta)
&=
(\partial_{x_i}\bm{\Phi}_\eta)^\top(D^2\bu)\circ\bm{\Phi}_\eta\partial_{x_j}\bm{\Phi}_\eta
+
D \bu \circ\bm{\Phi}_\eta\partial_{x_i}\partial_{x_j}\bm{\Phi}_\eta
\end{align*}
with corresponding trace
\begin{align*}
\partial_{y_i}\partial_{y_j}(\bu\circ\bm{\varphi}_\eta)
&=
(\partial_{y_i}\bm{\varphi}_\eta)^\top(D^2\bu)\circ\bm{\varphi}_\eta\partial_{y_j}\bm{\varphi}_\eta
+
D \bu \circ\bm{\varphi}_\eta\naby\bm{\varphi}_\eta (\naby\bm{\varphi}_\eta )^{-1}\partial_{y_i}\partial_{y_j}\bm{\varphi}_\eta
 \\
 &=
\partial_{y_i}\partial_{y_j}\bn\,\partial_t\eta
+
\partial_{y_j}\bn\,\partial_{y_i}\partial_t\eta
+
\partial_{y_j}\bn\,\partial_{y_i}\partial_t\eta
+
\bn\,\partial_{y_i}\partial_{y_j}\partial_t\eta.
\end{align*}
This trace follows from the fact that $ \bm{\Phi}_\eta=\bm{\varphi}_\eta$ and $\bu\circ\bm{\varphi}_\eta =(\partial_t\eta)\bn$  on $I\times \omega$. However, since
\begin{align*}
\naby(\bu\circ\bm{\varphi}_\eta) =D\bu\circ\bm{\varphi}_\eta\naby\bm{\varphi}_\eta =(\naby\partial_t\eta)\otimes\bn+(\partial_t\eta)\naby\bn
\end{align*}
on $I\times \omega$, we can conclude that
\begin{equation}
\begin{aligned}
\label{H}
(D^2\bu)\circ\bm{\varphi}_\eta
=\frac{\partial_{x_i}\bm{\varphi}_\eta \otimes \partial_{x_j}\bm{\varphi}_\eta
}{\vert \partial_{x_i}\bm{\varphi}_\eta\vert^2\vert \partial_{x_j}\bm{\varphi}_\eta\vert^2}
\otimes\big\{&
 \partial_{y_i}\partial_{y_j}\bn\,\partial_t\eta
+
\partial_{y_j}\bn\,\partial_{y_i}\partial_t\eta
+
\partial_{y_j}\bn\,\partial_{y_i}\partial_t\eta
+
\bn\,\partial_{y_i}\partial_{y_j}\partial_t\eta
\\&-
((\naby\partial_t\eta)\otimes\bn+(\partial_t\eta)\naby\bn)
(\naby\bm{\varphi}_\eta )^{-1}\partial_{y_i}\partial_{y_j}\bm{\varphi}_\eta
\big\} .
\end{aligned}
\end{equation}
Since the Stokes equation is linear, it follows that the doubly differentiated Stokes equation for 
\begin{align*}
\bv:=\partial_{x_i}\partial_{x_j}\bu, \qquad
q:=\partial_{x_i}\partial_{x_j} p, \qquad
\mathbf{g}:=\partial_{x_i}\partial_{x_j}\bff
\end{align*}
solve
\begin{equation}
\begin{aligned}
\label{stokeInhomDoubly}
&\partial_t \bv   
= 
\delx \bv -\nabx q
+\mathbf{g}
&\text{ in } I\times\Oeta,
\\
&\divx\bv=0
&\text{ in } I\times\Oeta,
\\&
(D^2\bu)\circ\bm{\varphi}_\eta = \mathbb{H}
&\text{ on } I\times \omega,
\\&
\bv(0)=\bv_0
&\text{ in }  \Omega_{\eta_0},
\end{aligned}
\end{equation}
where $\mathbb{H}$ is the right-hand side of \eqref{H}. We now recall that since $\bm{\varphi}$ and $\bn$ are smooth vectors and $\bm{\varphi}_{\eta}=\bm{\varphi}+\bn\,\eta $, all functions of $\bm{\varphi}_\eta$ and $\bn$ in the definition of $\mathbb{H}$ above are uniformly bounded in $I\times\omega$ if $\eta\in L^\infty(I;W^{3,2}(\omega)$.

If we now apply Theorem \ref{thm:stokeInhom} to \eqref{stokeInhomDoubly} and sum over all $i,j=1,2,3$, an immediate corollary  is the following. 
\begin{corollary}\label{cor:stokeInhom}
Let $(\bff, \eta_0, \eta_\star,  \bu_0, \eta)$
be a dataset such that
\begin{align*}
&\bff \in L^2(I;W^{2,2}(\Oeta)), \qquad
\eta(0,\cdot)=\eta_0(\cdot)\in W^{5,2}(\omega),
\\&
\partial_t\eta (0,\cdot)=\eta_\star(\cdot)\in W^{3,2}(\omega), \qquad
\bu(0,\cdot)=\bu_0(\cdot)\in W^{3,2}_{\divx}(\Omega_{\eta_0}),
\\&
\text{with }\bu_0 \circ \bm{\varphi}_{\eta_0} =\eta_\star \text{   on } \omega.
\end{align*} 
Suppose further that
\begin{align*}
\eta\in W^{2,2}(I;W^{2,2}(\omega))
\cap
W^{1,\infty}(I;W^{3,2}(\omega))
\cap 
W^{1,2}(I;W^{4,2}(\omega))
\cap
L^\infty(I;W^{5,2}(\omega))
\end{align*}
and  there exists constants $\ell,\kappa_0>0$ such that
\begin{align*}
\Vert\eta\Vert_{L^\infty(I\times\omega)}\leq \ell<L\qquad
\inf_{I\times\omega}\vert\partial_1\bm{\varphi}_\eta\times \partial_2 \bm{\varphi}_\eta\vert \geq \kappa_0.
\end{align*}  
Then there exist a unique solution to \eqref{thm:stokeInhom} such that
\begin{align*}
\int_I\big(\Vert \partial_t\bu\Vert_{W^{2,2}(\Oeta)}^2+&\Vert  \bu\Vert_{W^{4,2}(\Oeta)}^2
+
\Vert p\Vert_{W^{3,2}(\Oeta)}^2\big)\dt
\\
\lesssim&
\Vert\bu_0\Vert_{W^{3,2}(\Omega_{\eta_0})}^2
+
\Vert\eta_\star\Vert_{W^{3,2}(\omega)}^2
+
\int_I\big(\Vert\bff \Vert_{W^{2,2}(\Oeta)}^2
+
 \| \partial_t^2\eta\|_{W^{2,2}( \omega )}^2 
\big)\dt
\\&+
\int_I
\big( 
 \| \naby^2\partial_t \eta \,\partial_t \eta \|_{L^{2}( \omega )}^2
+
\|\partial_t\eta\|_{W^{4,2}( \omega )}^2
+
\|\naby^2\partial_t \eta \,\naby^2 \eta \|_{L^{2}( \omega )}^2
\big)\dt
\\&\qquad+
\int_I
\big(
 \|\naby^3\partial_t \eta\,\naby \eta\|_{L^{2}( \omega )}^2 
+
\|\naby^2\partial_t \eta \,|\naby \eta|^2\|_{L^{2}( \omega )}^2
\big)\dt
\end{align*} 
hold with a constant depending on $\eta_0$.
\end{corollary}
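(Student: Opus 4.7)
Since the Stokes system is linear, the substitutions $\bv:=\partial_{x_i}\partial_{x_j}\bu$, $q:=\partial_{x_i}\partial_{x_j}p$, $\mathbf{g}:=\partial_{x_i}\partial_{x_j}\bff$ reduce the problem to the doubly differentiated Stokes system \eqref{stokeInhomDoubly}, whose boundary trace is the expression $\mathbb{H}$ of \eqref{H}. The plan is to mimic the extension-and-subtract argument from the proof of Theorem~\ref{thm:stokeInhom} applied to \eqref{stokeInhomDoubly} for each fixed pair $(i,j)\in\{1,2,3\}^2$, and then sum over the two indices.

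Unlike the situation in Theorem~\ref{thm:stokeInhom}, whose boundary data has the specific normal form $(\partial_t\eta)\bn$, the trace $\mathbb{H}_{ij}$ has both normal and tangential components. Inspecting \eqref{H}, the entries of $\mathbb{H}$ are linear combinations of the atomic quantities $\partial_{y_i}\partial_{y_j}\partial_t\eta$, $\partial_{y_i}\partial_t\eta\,\partial_{y_j}\bn$, $\partial_t\eta\,\partial_{y_i}\partial_{y_j}\bn$ together with the quadratic corrections $\partial_t\eta\,\naby^2\eta$, $\naby\partial_t\eta\,\naby\eta$, and $\partial_t\eta\,|\naby\eta|^2$, multiplied by smooth coefficients built from $\bm{\varphi}$, $\bn$, $\bm{\varphi}_\eta$ and $(\naby\bm{\varphi}_\eta)^{-1}$, whose $L^\infty(I\times\omega)$ norms are controlled under the hypothesis $\eta\in L^\infty(I;W^{5,2}(\omega))$ via Sobolev embedding. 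I would construct an extension $\Phi^{(ij)}$ of $\mathbb{H}_{ij}$ into $\Oeta$ by applying Proposition~\ref{thm:extension} to each of the scalar normal coefficients appearing in $\mathbb{H}_{ij}$, and by a divergence-free tangential lift (a straightforward vectorial upgrade of Proposition~\ref{thm:extension}) for the components along the tangent basis $\partial_{y_1}\bm{\varphi}_\eta,\partial_{y_2}\bm{\varphi}_\eta$. Setting $\bw:=\bv-\Phi^{(ij)}$ then reduces \eqref{stokeInhomDoubly} to a homogeneous Stokes problem on the moving domain, and Theorem~\ref{thm:stokeHom} yields the maximal regularity bound for $\bw$.

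Combining the homogeneous estimate on $\bw$ with the extension bounds from Proposition~\ref{thm:extension}, and absorbing the cross terms via Young's inequality exactly as in the final lines of the proof of Theorem~\ref{thm:stokeInhom}, produces an $(i,j)$-wise bound whose right-hand side is precisely the collection of norms appearing in the statement of the corollary; the extra $\naby^2$-differentiation on the boundary is what upgrades $\|\partial_t^2\eta\|_{L^2(\omega)}^2$ to $\|\partial_t^2\eta\|_{W^{2,2}(\omega)}^2$ and produces the quadratic terms $\|\naby^2\partial_t\eta\,\naby^2\eta\|_{L^2}^2$, $\|\naby^3\partial_t\eta\,\naby\eta\|_{L^2}^2$ and $\|\naby^2\partial_t\eta\,|\naby\eta|^2\|_{L^2}^2$. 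Summing over $(i,j)\in\{1,2,3\}^2$ delivers the claimed $W^{4,2}$-bound on $\bu$ and the $W^{2,2}$-bound on $\partial_t\bu$. The $L^2(I;W^{3,2}(\Oeta))$-bound on $p$ follows by reading $\nabx(\partial_{x_i}\partial_{x_j}p)=\delx\bv-\partial_t\bv+\mathbf{g}$ directly off the momentum equation of \eqref{stokeInhomDoubly}, taking $L^2$-norms, and summing.

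The principal obstacle is the tangential extension step: Proposition~\ref{thm:extension} as stated delivers extensions only for scalar normal boundary data of the form $\xi\bn$, whereas the second $y$-derivative of the kinematic condition $(\partial_t\eta)\bn$ inevitably generates tangential contributions via the curvature quantities $\naby\bn$ and $\naby^2\bn$. One must therefore either verify a minor upgrade of Proposition~\ref{thm:extension} to vectorial boundary data (obtained by running the same harmonic-type extension on each component in the tangent basis), or rewrite each tangential contribution as a tangential derivative of a scalar normal quantity and pass the $\naby$-derivative onto the test function by integration by parts on $\omega$.
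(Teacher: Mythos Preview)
Your approach is essentially the paper's: differentiate the Stokes system twice in space, read off the trace $\mathbb{H}$ of \eqref{H}, apply the maximal regularity machinery of Theorem~\ref{thm:stokeInhom} to the resulting system \eqref{stokeInhomDoubly}, and sum over $i,j$. The paper does this in one line (``apply Theorem~\ref{thm:stokeInhom} to \eqref{stokeInhomDoubly} and sum''), treating Theorem~\ref{thm:stokeInhom} as if it covers general vector-valued boundary traces, whereas you unpack the argument and correctly flag that Proposition~\ref{thm:extension} as stated only handles normal data $\xi\bn$; the paper simply glosses over this point, so your proposed vectorial upgrade of the extension operator is in fact filling a detail the paper leaves implicit rather than introducing a new route.
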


\section{Strong solutions to subproblems}
\label{sec:strongSol}
The initial step in our strategy for actually constructing a solution involves solving the solvent-structure subproblem and the solute subproblem independently of each other. After that, in a subsequent section, we will use a fixed-point argument to get a local solution to the fully coupled system. 
\subsection{The solvent-structure subproblem}
In the following, for a given stress tensor $\underline{\bT}$, a given pair of body forces $\bff$ and $g$, we wish to derive suitable estimates for the unique strong solution of the following solvent-structure system of equations
\begin{align}
\divx \bu=0, \label{divfreeAlone} 
\\
\partial_t \bu  + (\mathbf{u}\cdot \nabx)\mathbf{u} 
= 
 \delx \bu -\nabx p
+\bff
+
\divx   \underline{\bT}, \label{momEqAlone} 
\\
\partial_t^2 \eta - \partial_t\dely \eta + \dely^2 \eta = g - ( \mathbb{S}\bn_\eta )\circ \bm{\varphi}_\eta\cdot\bn \,\vert\det(\naby\bm{\varphi}_\eta)\vert,\label{shellAlone}
\end{align}
defined on $I\times\Oeta\subset \mathbb R^{1+3}$ (with \eqref{shellAlone} defined on   $I\times\omega\subset \mathbb R^{1+2}$) where
\begin{align*}
\mathbb{S}=  \nabx \bu +(\nabx \bu)^\top  -p\mathbb{I}+ \underline{\bT}.
\end{align*}
We then complement \eqref{divfreeAlone}--\eqref{shellAlone} with the following initial and interface conditions
\begin{align}
&\eta(0,\cdot)=\eta_0(\cdot), \qquad\partial_t\eta(0,\cdot)=\eta_\star(\cdot) & \text{in }\omega,  
\\  
&\bu(0,\cdot)=\bu_0(\cdot) & \text{in }\Omega_{\eta_0}.
\\
\label{interfaceAlone}
&\bu\circ\bm{\varphi}_\eta=(\partial_t\eta)\bn & \text{on }I\times \omega.
\end{align} 
We recall that a unique local strong solution to the system above exists as stated in Theorem \ref{thm:BM}. The estimate we wish to derive is required in a subsequent section where we deal with the fully coupled solute-solvent-structure problem. We state this in the following:
\begin{proposition}
\label{prop:BM}
For the dataset $(\bff, g, \eta_0, \eta_\star, \bu_0, \underline{\bT})$ satisfying \eqref{datasetAlone} and with small initial acceleration, that is,  there exists a strictly positive constant $\epsilon<1$ such that
\begin{equation}
\begin{aligned}
\label{smallinitialEnergy}
 c&\big( 
 \Vert \eta_\star\Vert_{W^{1,2}(\omega)}^2
 +
  \Vert \eta_0\Vert_{W^{3,2}(\omega)}^2
  +
  \Vert  \bu_0\Vert_{W^{1,2}(\Omega_{\eta_0})}^2
  \big) 
   \\&\quad+ \int_{I } \big(  \Vert \bff\Vert_{L^2(\Omega_\eta)}^2  +
    \Vert \underline{\bT}\Vert_{W^{1,2}(\Omega_\eta)}^2  +
 \Vert g\Vert_{L^2(\omega)}^2 \big)\dt
 \leq
 \epsilon
\end{aligned}
\end{equation} 
for $c>0$,
let $(\eta,\bu,p)$ be a unique local strong solution of \eqref{divfreeAlone}--\eqref{interfaceAlone}, in the sense of Definition 3.1, on $I_*:=(0,T_*)$, $T_*\leq T$. 
Then the following bound hold
\begin{align*}
\sup_{I_*}
\big(
\Vert   \partial_t^2 \naby \eta\Vert_{L^2(\omega)}^2
&+
\Vert   \partial_t \naby\Dely \eta\Vert_{L^2(\omega)}^2
+
\Vert    \naby\Dely^2 \eta\Vert_{L^2(\omega)}^2
\big) 
 +\int_{I_*}\big(   \Vert\bu\Vert_{W^{4,2}(\Oeta)}^2+\Vert p\Vert_{W^{3,2}(\Oeta)}^2\big)\dt
\\&+
\int_{I_*}\big( \Vert\partial_t^2\bu\Vert_{L^{2}(\Oeta)}^2 + \Vert \partial_t\bu\Vert_{W^{2,2}(\Oeta)}^2+\Vert \partial_tp\Vert_{W^{1,2}(\Oeta)}^2\big)\dt
\\&+
\int_{I_*} 
\big(
\Vert \partial_t \dely^2 \eta\Vert_{L^2(\omega)}^2 
 +
\Vert  \partial_t^2 \dely \eta\Vert_{L^2(\omega)}^2 
+
\Vert  \partial_t^3 \eta\Vert_{L^2(\omega)}^2 
+\Vert \Dely^3\eta\Vert_{L^{2}(\omega)}^2
\big)
\dt
\\&\qquad
\lesssim
\mathcal{E}(\mathrm{data})
+
\int_{I_*} 
\Big( 
\Vert \underline{\bT}
\Vert_{W^{3,2}(\Oeta)}^2
+
\Vert  \partial_t\underline{\bT}
\Vert_{W^{1,2}(\Oeta)}^2
\Big)
\dt,
\end{align*} 
where
\begin{equation}
\begin{aligned}\nonumber
\mathcal{E}(\mathrm{data})
&:=
\Vert \eta_\star\Vert_{W^{3,2}(\omega)}^2
 +
 \Vert \eta_0\Vert_{W^{5,2}(\omega)}^2
  +
 \Vert
  \bu_0\Vert_{W^{3,2}(\Omega_{\eta_0})}^2
   \\&
    +
  \Vert \underline{\bT}(0)
  \Vert_{W^{2,2}(\Omega_{\eta_0})}^2
  +
   \Vert \bff(0)\Vert_{W^{1,2}(\Omega_{\eta_0})}^2    
   +
   \Vert g(0) \Vert_{W^{1,2}(\omega)}^2   
   \\&+
   \int_{I_*} \big(\Vert g \Vert_{W^{2,2}(\omega)}^2
  +
\Vert \partial_t g \Vert_{L^{2}(\omega)}^2
    \big)
\dt
   \\&+
   \int_{I_*} \big( \Vert \bff\Vert_{W^{2,2}(\Oeta)}^2
  +
 \Vert \partial_t\bff\Vert_{L^2(\Oeta)}^2
    \big)
\dt.
\end{aligned}
\end{equation} 
\end{proposition}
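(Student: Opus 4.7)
The plan is to bootstrap the natural energy estimate underlying Theorem~\ref{thm:BM} into the higher-order bound in the statement by combining a time-differentiation of the full coupled system with the moving-domain maximal regularity results of Section~\ref{sec:maxReg} and shell energy tests at the top order. The smallness assumption \eqref{smallinitialEnergy} is precisely what allows the quadratic convective nonlinearity and the interface traction, both of which are critical in $3$D at this regularity level, to be absorbed on the (a priori short) interval $I_*$.

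First I differentiate the coupled system in time. The triple $(\partial_t\bu,\partial_t p,\partial_t\eta)$ satisfies a linearised Stokes--shell system whose fluid forcing is
$$\mathbf{F}:=\partial_t\bff+\divx\partial_t\underline{\bT}-(\partial_t\bu\cdot\nabx)\bu-(\bu\cdot\nabx)\partial_t\bu,$$
whose kinematic interface condition, obtained by applying $\partial_t$ to $\bu\circ\bm{\varphi}_\eta=(\partial_t\eta)\bn$, reads
$$\partial_t\bu\circ\bm{\varphi}_\eta+\bigl((\nabx\bu)\circ\bm{\varphi}_\eta\bigr)(\partial_t\eta)\,\bn=(\partial_t^2\eta)\,\bn,$$
and whose shell equation has the expected structure with right-hand side $\partial_t g$ minus the time derivative of the fluid traction. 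The required initial data $\partial_t\bu(0)\in W^{1,2}(\Omega_{\eta_0})$ and $\partial_t^2\eta(0)\in L^2(\omega)$ are read off \eqref{momEqAlone} and \eqref{shellAlone} evaluated at $t=0$; their norms are absorbed into $\mathcal{E}(\mathrm{data})$, and compatibility at the interface follows from $\bu_0\circ\bm{\varphi}_{\eta_0}=\eta_\star\bn$.

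Next I run a coupled shell--fluid energy cascade. Testing \eqref{shellAlone} successively against $-\Dely\partial_t\eta$, $\Dely^2\partial_t\eta$ and $-\Dely^3\partial_t\eta$, and testing the time-differentiated shell equation against $\partial_t^2\eta$ and $-\Dely\partial_t^2\eta$, with the fluid traction contributions handled through the kinematic identity $\bu\circ\bm{\varphi}_\eta=(\partial_t\eta)\bn$ (the standard solvent--structure cancellation), produces the $L^\infty_t L^2_y$ bounds on $\partial_t^2\naby\eta$, $\partial_t\naby\Dely\eta$ and $\naby\Dely^2\eta$, together with the $L^2_t L^2_y$ bounds on $\partial_t^3\eta$, $\partial_t^2\Dely\eta$, $\partial_t\Dely^2\eta$ and $\Dely^3\eta$. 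With these shell quantities in hand, Theorem~\ref{thm:stokeInhom} applied to the time-differentiated Stokes system yields $\partial_t^2\bu\in L^2_tL^2$, $\partial_t\bu\in L^2_tW^{2,2}$ and $\partial_t p\in L^2_tW^{1,2}$, since its right-hand side is exactly the combination of shell norms just controlled, together with the data terms $\partial_t\bff$ and $\partial_t\underline{\bT}$. Independently, Corollary~\ref{cor:stokeInhom} applied to the undifferentiated system delivers the top spatial bounds $\bu\in L^2_tW^{4,2}$ and $p\in L^2_tW^{3,2}$, with an RHS again controlled by the shell norms just produced and by $\underline{\bT}\in L^2_tW^{3,2}$.

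The main obstacle is the nonlinear convective term $(\bu\cdot\nabx)\bu$, which appears in the forcing for both the time-differentiated and the higher-spatial Stokes estimates and which in three dimensions sits at the critical borderline for Sobolev algebra bounds at the present regularity. Closing the system therefore requires absorbing these contributions via a short-time argument powered by the smallness bound \eqref{smallinitialEnergy}: by choosing $T_*$ small, the norms of $\bu$, $\partial_t\eta$ and $\eta$ appearing as coefficients of the quadratic terms remain near their small initial values, and the resulting nonlinearities can be moved to the left-hand side with a constant proportional to $\epsilon<1$. A Gronwall argument on the combined functional then closes the estimate on $I_*$.
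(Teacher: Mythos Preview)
Your outline follows the paper's overall architecture—time-differentiate the coupled system, run higher-order shell energy tests, and apply the two maximal-regularity results (Theorem~\ref{thm:stokeInhom} to the time-differentiated Stokes problem, Corollary~\ref{cor:stokeInhom} to the undifferentiated one)—so the skeleton is right. But the mechanism you propose for absorbing the critical quadratic terms is not the one that actually closes the estimate, and the missing piece is essential.

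You write that absorption proceeds ``via a short-time argument\ldots by choosing $T_*$ small, the norms of $\bu$, $\partial_t\eta$ and $\eta$ appearing as coefficients of the quadratic terms remain near their small initial values.'' This is not how the proof works, and it cannot be made to work here: $T_*$ is the existence time handed to you by Theorem~\ref{thm:BM}, not a parameter you may shrink, and the estimate is claimed on all of $I_*$. More importantly, a short-time argument would require a priori control of the very top-order norms you are trying to bound. What the paper does instead is invoke an \emph{intermediate acceleration estimate} (the content of \cite[Section~4]{BMSS}, recorded as \eqref{est:reg} and combined into \eqref{est:reg1}): once the solution is known to satisfy the Serrin-type condition \eqref{serrin}, one gets
\[
\sup_{I_*}\bigl(\|\partial_t\eta\|_{W^{1,2}(\omega)}^2+\|\eta\|_{W^{3,2}(\omega)}^2+\|\bu\|_{W^{1,2}(\Oeta)}^2\bigr)+\int_{I_*}\bigl(\|\bu\|_{W^{2,2}}^2+\|p\|_{W^{1,2}}^2+\ldots\bigr)\dt\leq\epsilon
\]
directly from the data smallness \eqref{smallinitialEnergy}, uniformly on $I_*$ with no time restriction. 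It is this uniform $\epsilon$-bound on the \emph{coefficient} norms that turns every nonlinear contribution—$\|\bu\cdot\nabx\bu\|_{W^{2,2}}^2$, the traction commutators $f_1,\ldots,f_3$, and all the interface terms arising from the extension in Theorem~\ref{thm:stokeInhom}—into an expression of the form $\epsilon\cdot(\text{top-order quantity on the LHS})$, which is then absorbed for $\epsilon<1$. No Gr\"onwall step is used or needed.

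A secondary point: to estimate $\|\partial_t\bu(0)\|_{W^{1,2}(\Omega_{\eta_0})}$ you must first recover $p(0)$, and the paper does this by solving an elliptic Robin problem for the initial pressure (see \eqref{initialPressEst}); your sentence ``read off \eqref{momEqAlone}\ldots their norms are absorbed into $\mathcal{E}(\mathrm{data})$'' glosses over this. Finally, your set of shell tests differs slightly from the paper's (the paper tests only the \emph{time-differentiated} shell equation, against $-\partial_t^2\Dely\eta$, $\partial_t\Dely^2\eta$, $\partial_t^3\eta$, and recovers $\Dely^3\eta$ by isolating it in the original equation); your variant with tests on the undifferentiated shell could likely be made to work, but you should be aware the traction term then needs to be controlled at a correspondingly higher trace regularity.
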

\begin{proof}
First of all, if we consider the pair of test functions $(\partial_t\eta, \bu)$ for the shell and momentum equations, respectively, then we obtain the standard energy estimate
\begin{equation}
\begin{aligned}
\label{standEnergy} 
\sup_{I_*}&\big(\Vert \partial_t\eta\|_{L^2(\omega)}^2+ \Vert\Dely\eta\|_{L^2(\omega)}^2
+\Vert\bu\Vert_{L^2(\Oeta)}^2
\big)
+\int_{I_*}\big(\Vert\partial_t\naby\eta\Vert_{L^2(\omega)}^2+\Vert \nabx\bu\Vert_{L^2(\Oeta)}^2 
\big)\dt
\\&\lesssim 
\Vert \bu_0\Vert_{L^{2}(\Omega_{\eta_0})}^2+\Vert \eta_\star \Vert_{L^2(\omega)}^2+\Vert \Dely\eta_0\Vert_{L^{2}(\omega)}^2
+ 
\int_{I_*} \big(  \Vert \bff\Vert_{L^2(\Omega_\eta)}^2  +
    \Vert \underline{\bT}\Vert_{L^{2}(\Omega_\eta)}^2  +
 \Vert g\Vert_{L^2(\omega)}^2 \big)\dt.
\end{aligned} 
\end{equation}
Now, given that  our strong solution as stated in Theorem \ref{thm:BM} is quite regular, by Sobolev embedding, we have in particular that
\begin{align}
\label{serrin}
\bu \in {L^2(I_*;L^\infty(\Omega_\eta))} \quad\text{ and }\quad\eta\in { L^\infty(I_*;C^{1}(\omega))}.
\end{align}
However, as  shown in \cite[Section 4]{BMSS}, if a solution satisfy \eqref{serrin}, then besides the standard energy estimate above, we additionally have the following acceleration estimate 
 \begin{equation}
\begin{aligned}
\label{est:reg}
&\sup_{I_*} 
\big(\Vert \partial_t\naby \eta\Vert_{L^2(\omega)}^2  
+
\Vert \naby\Dely \eta\Vert_{L^2(\omega)}^2  
+
\Vert\nabx  \bu \Vert_{L^2(\Oeta)}^2 
\big)
\\&\quad+
\int_{I_*} 
\big(\Vert \partial_t\Dely \eta \Vert_{L^2(\omega)}^2 + \Vert \partial_t^2 \eta\Vert_{L^2(\omega)}^2 
 +
 \Vert \nabx^2 \bu\Vert_{L^2(\Oeta)}^2  +\Vert \partial_t \bu \Vert_{L^2(\Oeta)}^2   + \Vert \nabx  p\Vert_{L^2(\Oeta)}^2 
 \big) \dt
 \\&\lesssim
 \big( 
 \Vert \eta_\star\Vert_{W^{1,2}(\omega)}^2
 +
  \Vert \eta_0\Vert_{W^{3,2}(\omega)}^2
  +
  \Vert  \bu_0\Vert_{W^{1,2}(\Omega_{\eta_0})}^2
  \big) 
   \\&\quad+ \int_{I_*} \big(  \Vert \bff\Vert_{L^2(\Omega_\eta)}^2  +
    \Vert \underline{\bT}\Vert_{W^{1,2}(\Omega_\eta)}^2  +
 \Vert g\Vert_{L^2(\omega)}^2 \big)\dt.
\end{aligned}
\end{equation}
Furthermore, if we make the highest spatial derivative term in the shell equation the subject, then we obtain
\begin{equation}
\begin{aligned}
\label{highEta}
\int_{I_*}\Vert \Dely^2\eta\Vert_{L^2(\omega)}^2\dt
\lesssim&
\int_{I_*} 
\big(\Vert \partial_t\Dely \eta \Vert_{L^2(\omega)}^2 + \Vert \partial_t^2 \eta\Vert_{L^2(\omega)}^2 
 +
 \Vert g\Vert_{L^2(\omega)}^2 \big)\dt
 \\&+ \int_{I_*}
  \big(\Vert\bu\Vert_{W^{2,2}( \Oeta)}^2
  +
 \Vert p\Vert_{W^{1,2}( \Oeta)}^2
 +
 \Vert \underline{\bT}\Vert_{W^{1,2}( \Oeta)}^2
 \big)\dt
\end{aligned}
\end{equation}
where we have used the trace theorem to obtain the bound
\begin{align*}
\int_{I_*}\int_\omega\vert
  ( \mathbb{S}\bn_\eta )\circ \bm{\varphi}_\eta\cdot\bn \vert^2\,\vert\det(\naby\bm{\varphi}_\eta) 
  \vert^2\dy\dt
  \lesssim&
  \int_{I_*}
  \big(\Vert\bu\Vert_{W^{1,2}(\partial\Oeta)}^2
  +
 \Vert p\Vert_{L^{2}(\partial\Oeta)}^2
 +
 \Vert \underline{\bT}\Vert_{L^{2}(\partial\Oeta)}^2
 \big)\dt
    \\
     \lesssim&
  \int_{I_*}
  \big(\Vert\bu\Vert_{W^{2,2}( \Oeta)}^2
  +
 \Vert p\Vert_{W^{1,2}( \Oeta)}^2
 +
 \Vert \underline{\bT}\Vert_{W^{1,2}( \Oeta)}^2
 \big)\dt.
\end{align*}
Now note that the right-hand side of \eqref{highEta} can be estimated by the right-hand side of \eqref{est:reg}. Therefore, if we combine \eqref{standEnergy}, \eqref{est:reg} and \eqref{highEta}, we obtain the estimate
 \begin{equation}
\begin{aligned}
\label{est:reg1}
&\sup_{I_*} 
\big(\Vert \partial_t  \eta\Vert_{W^{1,2}(\omega)}^2  
+
\Vert \naby\Dely \eta\Vert_{L^2(\omega)}^2  
+
\Vert  \bu \Vert_{W^{1,2}(\Oeta)}^2 
\big)
\\&\quad+
\int_{I_*} 
\big(\Vert  \eta\Vert_{W^{4,2}(\omega)}^2 + \Vert \partial_t  \eta \Vert_{W^{2,2}(\omega)}^2 + \Vert \partial_t^2 \eta\Vert_{L^2(\omega)}^2  
 \big) \dt
 \\&\quad+
\int_{I_*} 
\big( 
 \Vert   \bu\Vert_{W^{2,2}(\Oeta)}^2  +\Vert \partial_t \bu \Vert_{L^2(\Oeta)}^2   + \Vert   p\Vert_{W^{1,2}(\Oeta)}^2 
 \big) \dt
 \\&\lesssim
 \big( 
 \Vert \eta_\star\Vert_{W^{1,2}(\omega)}^2
 +
  \Vert \eta_0\Vert_{W^{3,2}(\omega)}^2
  +
  \Vert  \bu_0\Vert_{W^{1,2}(\Omega_{\eta_0})}^2
  \big) 
   \\&\quad+ \int_{I_* } \big(  \Vert \bff\Vert_{L^2(\Omega_\eta)}^2  +
    \Vert \underline{\bT}\Vert_{W^{1,2}(\Omega_\eta)}^2  +
 \Vert g\Vert_{L^2(\omega)}^2 \big)\dt
  \\&\leq
 \epsilon
\end{aligned}
\end{equation}
where the last inequality follows from  \eqref{smallinitialEnergy}.
In order to obtain higher regularity estimates, we have to take the  time derivative of our original solvent-structure subsystem. This is because of its parabolic nature which  requires one temporal regularity for two extra spatial regularity. The new time-differentiated solvent-structure subsystem becomes
\begin{align} 
\divx \partial_t\bu=0, \label{divfreeAloneTime} 
\\
\partial_t^2\bu  + (\mathbf{u}\cdot \nabx)\partial_t\bu
= 
\delx \partial_t\bu -\nabx\partial_t p  
  +\partial_t\bff - (\partial_t\bu\cdot \nabx)\mathbf{u} 
  +
\divx  \partial_t \underline{\bT}, 
  \label{momEqAloneTime} 
\\
\partial_t^3\eta - \partial_t^2 \dely \eta  + \partial_t\dely^2 \eta = \partial_tg+f_0
+f_1+f_2+f_3
,\label{shellAloneTime}
\end{align}
defined in  $I\times \Oeta$  (with \eqref{shellAloneTime} defined on   $I\times\omega\subset \mathbb R^{1+2}$)  where
\begin{align*}
f_0&:= - (\partial_t \mathbb{S}\bn_\eta )\circ \bm{\varphi}_\eta\cdot\bn \,\vert\det(\naby\bm{\varphi}_\eta)\vert,
\\
f_1&:=- ( \mathbb{S}\partial_t\bn_\eta )\circ \bm{\varphi}_\eta\cdot\bn \,\vert\det(\naby\bm{\varphi}_\eta)\vert,
\\
f_2&:=- \nabx( \mathbb{S}\bn_\eta )\circ\bm{\varphi}_\eta( \partial_t\eta\bn)\cdot\bn \,\vert\det(\naby\bm{\varphi}_\eta)\vert,
\\
f_3&:=- ( \mathbb{S}\bn_\eta )\circ \bm{\varphi}_\eta\cdot\bn \,\partial_t\vert\det(\naby\bm{\varphi}_\eta)\vert,
\\
\partial_t\mathbb{S}&:=  \nabx \partial_t \bu  +(\nabx \partial_t \bu)^\top  - \partial_tp\mathbb{I}+ \partial_t\underline{\bT}.
\end{align*}
For the structure subsystem, we have used that
\begin{align*}
\partial_t\big(( \mathbb{S}\bn_\eta )\circ \bm{\varphi}_\eta\big) 
=&(\partial_t\mathbb{S}\bn_\eta )\circ\bm{\varphi}_\eta
+
( \mathbb{S}\partial_t\bn_\eta )\circ\bm{\varphi}_\eta
+
\nabx( \mathbb{S}\bn_\eta )\circ\bm{\varphi}_\eta( \partial_t\eta\bn)
\end{align*}
where  
\begin{align*} 
\partial_t  \bn_\eta 
&=
 \partial_{y_1} \partial_t\bm{\varphi}_\eta \times  \partial_{y_2} \bm{\varphi}_\eta+ \partial_{y_1} \bm{\varphi}_\eta \times  \partial_{y_2}\partial_t \bm{\varphi}_\eta 
\\&=
 \partial_{y_1} (\partial_t \eta\bn) \times  \partial_{y_2} \bm{\varphi}_\eta+ \partial_{y_1} \bm{\varphi}_\eta \times  \partial_{y_2}(\partial_t \eta\bn).
\end{align*}
The new  subproblem \eqref{divfreeAloneTime}-\eqref{shellAloneTime} will now satisfy a modified interface and initial conditions. We begin by investigating what the new interface condition should look like. For this, we first note  that the chain rule gives
\begin{align*}
\partial_t(\bu\circ\bm{\varphi}_\eta)
=&\partial_t\bu\circ\bm{\varphi}_\eta
+
(\nabx\bu\circ\bm{\varphi}_\eta)^\top \partial_t\bm{\varphi}_\eta. 
\end{align*}
However, we  can use the definition of $\bm{\varphi}_\eta$ (see Section \ref{sec:setup}) and the original interface condition \eqref{interfaceAlone} to get that
\begin{align*}
&\partial_t\bm{\varphi}_\eta
 =( \partial_t\eta)\bn
=\bu\circ\bm{\varphi}_\eta & \text{on }I\times \omega.
\end{align*}
As a result, differentiating the interface condition \eqref{interfaceAlone} leads to
\begin{align} 
\label{interfaceAloneTime}
&\partial_t\bu\circ\bm{\varphi}_\eta
+
((\bu\cdot\nabx)\bu)\circ\bm{\varphi}_\eta=(\partial_t^2 \eta)\bn & \text{on }I\times \omega.
\end{align}
where we have used that
\begin{align*}
(\nabx\bu\circ\bm{\varphi}_\eta)^\top\bu\circ\bm{\varphi}_\eta=
((\nabx\bu)^\top\bu)\circ\bm{\varphi}_\eta
=
((\bu\cdot\nabx)\bu)\circ\bm{\varphi}_\eta.
\end{align*}
Since our new solvent-structure subsystem have been derived from the old by taking the time derivative, the time-differentiated  interface condition \eqref{interfaceAloneTime} above invokes the following compatibility condition  derived from the original solvent-structure  subsystem \eqref{momEqAlone}-\eqref{shellAlone}
\begin{equation}
\begin{aligned}
\label{compatibilityTime}
\big(
 \delx \bu& -\nabx p
+\bff
+
\divx   \underline{\bT}
\big)\circ\bm{\varphi}_\eta
\\
&=\big( \partial_t\dely \eta - \dely^2 \eta+ g - ( \mathbb{S}\bn_\eta )\circ \bm{\varphi}_\eta\cdot\bn \,\vert\det(\naby\bm{\varphi}_\eta)\vert\big)\bn
&\text{ on } \overline{I\times\omega}.
\end{aligned}
\end{equation}
Finally,  the new initial conditions are
\begin{align}
\label{etaStarTime}
&\partial_t\eta 
\big\vert_{t=0}
=\eta_\star
 & \text{in }\omega,  
\\  
& \partial_t^2 \eta 
\big\vert_{t=0}
=\dely \eta_\star - \dely^2 \eta_0 + g(0) - ( \mathbb{S}(0)\bn_{\eta_0} )\circ \bm{\varphi}_{\eta_0}\cdot\bn \,\vert\det(\naby\bm{\varphi}_{\eta_0})\vert
& \text{in }\omega,  
\\  
\label{buTime}
&\partial_t\bu 
\big\vert_{t=0}
=
 \delx \bu_0 -\nabx p(0)
+\bff(0)
+
\divx   \underline{\bT}(0)- (\mathbf{u}_0\cdot \nabx)\mathbf{u}_0
& \text{in }\Omega_{\eta_0}.
\end{align}
Here, the initial pressure $p(0)$ can be recovered by solving the elliptic Robin boundary problem 
 (with boundary condition obtained from the compatibility condition \eqref{compatibilityTime} at $t=0$):
\begin{align*}
&\Delx p(0) 
=  
\divx\bff(0)
+
\divx(\divx   \underline{\bT}(0)-
    \mathbf{u}_0\cdot\nabx\mathbf{u}_0)
&\text{ in } \Omega_{\eta_0} ,
\\
&\bn_{\eta_0}\cdot\nabx p(0)
 +
  p(0)\vert\bn_{\eta_0}  \cdot\bn\circ \bm{\varphi}_{\eta_0}^{-1}\vert^2\vert\det(\naby\bm{\varphi}_{\eta_0})\vert   \circ \bm{\varphi}_{\eta_0}^{-1}
\\
&\qquad\quad=
    (\nabx \bu_0 +(\nabx \bu_0)^\top  ) \bn_{\eta_0}\cdot\bn\circ \bm{\varphi}_{\eta_0}^{-1}(\bn_{\eta_0}\cdot\bn\circ \bm{\varphi}_{\eta_0}^{-1}) \vert\det(\naby\bm{\varphi}_{\eta_0})\vert\circ \bm{\varphi}_{\eta_0}^{-1}
\\
&\qquad\qquad
+
\big(
 \delx \bu_0 
+\bff(0)
+
\divx   \underline{\bT}(0)  \big) \cdot\bn_{\eta_0}
-
\big(( \dely \eta_\star - \dely^2 \eta_0 + g(0) )\bn \big)  \circ \bm{\varphi}_{\eta_0}^{-1}\cdot\bn_{\eta_0}
&\text{ on } \partial\Omega_{\eta_0} .
\end{align*}
Since $\Omega_{\eta_0}$ is of class $C^{1,1}$,  the solution operator to the Robin problem above has the usual properties, i.e. the solution belongs to $W^{k,2}$, $1\leq k\leq2$ if the right-hand side belongs to $ W^{k-2,2}$ and the boundary datum is in $W^{k-3/2,2}$. In particular,
\begin{equation}
\begin{aligned}
\label{initialPressEst}
\Vert p(0)\Vert_{W^{2,2}(\Omega_{\eta_0})}^2
\lesssim&
\Vert \eta_\star\Vert_{W^{3,2}(\omega)}^2
 +
 \Vert \eta_0\Vert_{W^{5,2}(\omega)}^2  
     +
  \Vert g(0) \Vert_{W^{1,2}(\omega)}^2  
   \\&
    +
  \Vert \underline{\bT}(0)
  \Vert_{W^{2,2}(\Omega_{\eta_0})}^2
  +
   \Vert \bff(0)\Vert_{W^{1,2}(\Omega_{\eta_0})}^2 
  +
  (1+\epsilon)
 \Vert
  \bu_0\Vert_{W^{3,2}(\Omega_{\eta_0})}^2 
\end{aligned}
\end{equation}
 where we have used that
 \begin{align*}
\Vert \divx(\bu_0\cdot\nabx\bu_0)\Vert_{L^2(\Omega_{\eta_0})}^2
&\lesssim
\Vert  \bu_0 \Vert_{L^4(\Omega_{\eta_0})}^2
\Vert  \bu_0 \Vert_{W^{2,4}(\Omega_{\eta_0})}^2
+
\Vert  \bu_0 \Vert_{W^{1,2}(\Omega_{\eta_0})}^2
\Vert  \bu_0 \Vert_{W^{1,\infty}(\Omega_{\eta_0})}^2
\\
&\lesssim 
\Vert  \bu_0 \Vert_{W^{1,2}(\Omega_{\eta_0})}^2
\Vert  \bu_0 \Vert_{W^{3,2}(\Omega_{\eta_0})}^2
\\
&\leq 
\epsilon
\Vert  \bu_0 \Vert_{W^{3,2}(\Omega_{\eta_0})}^2.
 \end{align*}
With the setup above, we can now proceed to obtain our desired estimate for the time-differentiated solvent-structure subsystem \eqref{divfreeAloneTime}-\eqref{buTime}. For this, we first observe that by using \eqref{est:reg1},
\begin{align*}
\Vert\det(\naby\bm{\varphi}_\eta)\Vert_{L^\infty(\omega)}^2
&\lesssim \Vert \bm{\varphi}\Vert_{W^{1,\infty}(\omega)}^4+\Vert\eta\Vert_{W^{3,2}(\omega)}^2+\Vert\eta\Vert_{W^{3,2}(\omega)}^4
\leq\epsilon,
\\
\Vert\partial_t\bn_\eta\Vert_{L^2(\omega)}^2
&\lesssim
\Vert\partial_t\eta\Vert_{W^{1,2}(\omega)}^2\big(1+\Vert\eta\Vert_{W^{3,2}(\omega)}^2\big)\leq\epsilon,
\\
\Vert \bn_\eta\Vert_{W^{1,4}(\omega)}^2
&\lesssim
\Vert \bm{\varphi}\Vert_{W^{1,\infty}(\omega)}+
\Vert \eta\Vert_{W^{3,2}(\omega)}^2\big(1+\Vert\eta\Vert_{W^{3,2}(\omega)}^2\big)\leq\epsilon,
\\
\Vert\partial_t \eta\Vert_{L^4(\omega)}^2
&\lesssim
\Vert\partial_t\eta\Vert_{W^{1,2}(\omega)}^2 \leq\epsilon,
\end{align*}
and by the trace theorem
\begin{align*}
\Vert\partial_t\mathbb{S}\Vert_{L^2(\partial\Omega_\eta)}^2
&\lesssim  
\Vert\partial_t \bu\Vert_{W^{2,2}(\Omega_\eta)}^2  +\Vert \partial_tp\Vert_{W^{1,2}(\Omega_\eta)}^2 +
\Vert \partial_t\underline{\bT}\Vert_{W^{1,2}(\Omega_\eta)}^2 ,
\\
\Vert \mathbb{S}\Vert_{W^{1,\infty}(\partial\Omega_\eta)}^2
&\lesssim  
\Vert  \bu\Vert_{W^{4,2}(\Omega_\eta)}^2  +\Vert  p\Vert_{W^{3,2}(\Omega_\eta)}^2 +
\Vert  \underline{\bT}\Vert_{W^{3,2}(\Omega_\eta)}^2 .
\end{align*}
As such,
\begin{equation}
\begin{aligned}\label{f0f3}
\Vert f_0\Vert_{L^2(\omega)}^2
&\leq
\epsilon\big(\Vert\partial_t \bu\Vert_{W^{2,2}(\Omega_\eta)}^2  +\Vert \partial_tp\Vert_{W^{1,2}(\Omega_\eta)}^2 +
\Vert \partial_t\underline{\bT}\Vert_{W^{1,2}(\Omega_\eta)}^2 
\big),
\\
\Vert f_1\Vert_{L^2(\omega)}^2
&\leq
\epsilon\big(\Vert  \bu\Vert_{W^{4,2}(\Omega_\eta)}^2  +\Vert  p\Vert_{W^{3,2}(\Omega_\eta)}^2 +
\Vert  \underline{\bT}\Vert_{W^{3,2}(\Omega_\eta)}^2
\big),
\\
\Vert f_2\Vert_{L^2(\omega)}^2
&\leq
\epsilon\big(\Vert  \bu\Vert_{W^{4,2}(\Omega_\eta)}^2  +\Vert  p\Vert_{W^{3,2}(\Omega_\eta)}^2 +
\Vert  \underline{\bT}\Vert_{W^{3,2}(\Omega_\eta)}^2
\big),
\\
\Vert f_3\Vert_{L^2(\omega)}^2
&\leq
\epsilon\big(\Vert  \bu\Vert_{W^{4,2}(\Omega_\eta)}^2  +\Vert  p\Vert_{W^{3,2}(\Omega_\eta)}^2 +
\Vert  \underline{\bT}\Vert_{W^{3,2}(\Omega_\eta)}^2
\big) .
\end{aligned}
\end{equation}
If we now test \eqref{shellAloneTime} with $ - \partial_t^2 \dely \eta$ and use \eqref{f0f3}, we obtain
\begin{align*}
\sup_{I_*}
\big(
\Vert   \partial_t^2 \naby \eta\Vert_{L^2(\omega)}^2
&+
\Vert   \partial_t \naby\Dely \eta\Vert_{L^2(\omega)}^2
\big) 
+
\int_{I_*} 
\Vert  \partial_t^2 \dely \eta\Vert_{L^2(\omega)}^2 
\dt
\\&\leq
c\Vert   (\partial_t^2 \naby \eta)(0)\Vert_{L^2(\omega)}^2
+
\Vert   (\partial_t \naby\Dely \eta)(0)\Vert_{L^2(\omega)}^2
+
c\int_{I_*}\Vert\partial_tg\Vert_{L^2(\omega)}^2\dt
\\&
+
\epsilon\int_{I_*}
\big(\Vert\partial_t \bu\Vert_{W^{2,2}(\Omega_\eta)}^2  +\Vert \partial_tp\Vert_{W^{1,2}(\Omega_\eta)}^2 +
\Vert \partial_t\underline{\bT}\Vert_{W^{1,2}(\Omega_\eta)}^2 
\big)\dt
\\
&+
\epsilon\int_{I_*}
\big(\Vert  \bu\Vert_{W^{4,2}(\Omega_\eta)}^2  +\Vert  p\Vert_{W^{3,2}(\Omega_\eta)}^2 +
\Vert  \underline{\bT}\Vert_{W^{3,2}(\Omega_\eta)}^2
\big)\dt
\end{align*}
where using \eqref{etaStarTime}-\eqref{initialPressEst} yield
\begin{align*}
\Vert   (\partial_t^2 \naby \eta)(0)\Vert_{L^2(\omega)}^2
+
\Vert   (\partial_t \naby\Dely \eta)(0)\Vert_{L^2(\omega)}^2
\lesssim&
\Vert \eta_\star\Vert_{W^{3,2}(\omega)}^2
 +
 \Vert \eta_0\Vert_{W^{5,2}(\omega)}^2
  +
  (1+\epsilon)
 \Vert
  \bu_0\Vert_{W^{3,2}(\Omega_{\eta_0})}^2
   \\&
    +
  \Vert \underline{\bT}(0)
  \Vert_{W^{2,2}(\Omega_{\eta_0})}^2
  +
   \Vert \bff(0)\Vert_{W^{1,2}(\Omega_{\eta_0})}^2   
     +
  \Vert g(0) \Vert_{W^{1,2}(\omega)}^2.   
\end{align*}
Consequently, by defining
\begin{align*}
\mathcal{D}_1:=&
c\Vert \eta_\star\Vert_{W^{3,2}(\omega)}^2
 +
 c\Vert \eta_0\Vert_{W^{5,2}(\omega)}^2
  +
  (1+\epsilon)
 \Vert
  \bu_0\Vert_{W^{3,2}(\Omega_{\eta_0})}^2
   \\&
    +
 c \Vert \underline{\bT}(0)
  \Vert_{W^{2,2}(\Omega_{\eta_0})}^2
  +
   c\Vert \bff(0)\Vert_{W^{1,2}(\Omega_{\eta_0})}^2   
     +
  c\Vert g(0) \Vert_{W^{1,2}(\omega)}^2  
   +
   c
   \int_{I_*}  
  \Vert\partial_tg\Vert_{L^2(\omega)}^2 
\dt
\\&
+
\epsilon\int_{I_*}
\big(\Vert\partial_t \bu\Vert_{W^{2,2}(\Omega_\eta)}^2  +\Vert \partial_tp\Vert_{W^{1,2}(\Omega_\eta)}^2 +
\Vert \partial_t\underline{\bT}\Vert_{W^{1,2}(\Omega_\eta)}^2 
\big)\dt
\\
&+
\epsilon\int_{I_*}
\big(\Vert  \bu\Vert_{W^{4,2}(\Omega_\eta)}^2  +\Vert  p\Vert_{W^{3,2}(\Omega_\eta)}^2 +
\Vert  \underline{\bT}\Vert_{W^{3,2}(\Omega_\eta)}^2
\big)\dt,
\end{align*}
then we have that
\begin{align}
\label{d1}
\sup_{I_*}
\big(
\Vert   \partial_t^2 \naby \eta\Vert_{L^2(\omega)}^2
&+
\Vert   \partial_t \naby\Dely \eta\Vert_{L^2(\omega)}^2
\big) 
+
\int_{I_*} 
\Vert  \partial_t^2 \dely \eta\Vert_{L^2(\omega)}^2 
\dt
\leq \mathcal{D}_1.
\end{align}
Next, if we test \eqref{shellAloneTime} with $\partial_t \dely^2 \eta$ and use \eqref{f0f3}, we obtain
\begin{align}
\label{d2}
\sup_{I_*}
\Vert   \partial_t \naby\Dely \eta\Vert_{L^2(\omega)}^2 
+&
\int_{I_*} 
\Vert  \partial_t \dely^2 \eta\Vert_{L^2(\omega)}^2 
\dt
 \leq
c\int_{I_*}\Vert\partial_t^3\eta\Vert_{L^2(\omega)}^2\dt
+
\mathcal{D}_1.
\end{align}
To estimate the first term on the right-hand side above, we test \eqref{shellAloneTime} with $\partial_t^3 \eta$ and use \eqref{f0f3} to obtain  
\begin{align*}
\sup_{I_*}
\Vert   \partial_t^2 \naby  \eta\Vert_{L^2(\omega)}^2 
+&
\int_{I_*} 
\Vert  \partial_t^3 \eta\Vert_{L^2(\omega)}^2 
\dt
 \leq
c\int_{I_*} \Vert\partial_t^2\dely\eta\Vert_{L^2(\omega)}^2\dt
+
c\int_{I_*}\frac{\dd}{\dt}\int_\omega\partial_t\naby\dely\eta\,\partial_t^2\naby\eta\dy\dt
+
\mathcal{D}_1
\end{align*}
where using \eqref{d1} and Young's inequality yield
\begin{align*}
c\int_{I_*} \Vert\partial_t^2\dely\eta\Vert_{L^2(\omega)}^2\dt+
c\int_{I_*}\frac{\dd}{\dt}\int_\omega\partial_t\naby\dely\eta\,\partial_t^2\naby\eta\dy\dt
\leq \mathcal{D}_1.
\end{align*} 
Thus,
\begin{align}
\label{d4}
\sup_{I_*}
\Vert   \partial_t^2 \naby  \eta\Vert_{L^2(\omega)}^2 
+&
\int_{I_*} 
\Vert  \partial_t^3 \eta\Vert_{L^2(\omega)}^2 
\dt
 \leq 
\mathcal{D}_1.
\end{align}
We can now combine \eqref{d1}, \eqref{d2} and \eqref{d4} and conclude that
\begin{align}
\label{d4}
\sup_{I_*}
\big(
\Vert   \partial_t^2 \naby \eta\Vert_{L^2(\omega)}^2
&+
\Vert   \partial_t \naby\Dely \eta\Vert_{L^2(\omega)}^2
\big) 
+
\int_{I_*} 
\big(
\Vert \partial_t \dely^2 \eta\Vert_{L^2(\omega)}^2 
 +
\Vert  \partial_t^2 \dely \eta\Vert_{L^2(\omega)}^2 
+
\Vert  \partial_t^3 \eta\Vert_{L^2(\omega)}^2 
\big)
\dt
\leq \mathcal{D}_1.
\end{align}
This completes the desired estimate for the structure subsystem so we now tackle the solvent subsystem.
To directly obtain higher order regularity estimate for the fluid's velocity and pressure, we rewrite the momentum equation as 
\begin{align*}
&\partial_t \bu  -
 \delx \bu +\nabx p
=\bff
+
\divx   \underline{\bT}- (\mathbf{u}\cdot \nabx)\mathbf{u} &\text{ in } I_*\times\Oeta
\end{align*}
with $\bu\circ\bm{\varphi}_\eta=(\partial_t\eta)\bn$ on $I_*\times \omega$. Then, by maximal regularity, Corollary \ref{cor:stokeInhom},
\begin{align*}
\int_{I_*}\big(\Vert \partial_t\bu\Vert_{W^{2,2}(\Oeta)}^2+&\Vert  \bu\Vert_{W^{4,2}(\Oeta)}^2
+
\Vert p\Vert_{W^{3,2}(\Oeta)}^2\big)\dt
\\
\lesssim&
\Vert\bu_0\Vert_{W^{3,2}(\Omega_{\eta_0})}^2
+
\Vert\eta_\star\Vert_{W^{3,2}(\omega)}^2
+
\int_{I_*}\big(\Vert\bff \Vert_{W^{2,2}(\Oeta)}^2
+
\Vert  \underline{\bT}\Vert_{W^{3,2}(\Oeta)}^2
\big)\dt
\\& 
+ 
\int_{I_*}\big(\Vert \bu\cdot\nabx\bu\Vert_{W^{2,2}(\Oeta)}^2
+
 \| \partial_t^2\eta\|_{W^{2,2}( \omega )}^2 \big)\dt
\\&+
\int_{I_*}
\big( 
 \| \naby^2\partial_t \eta \,\partial_t \eta \|_{L^{2}( \omega )}^2
+
\|\partial_t\eta\|_{W^{4,2}( \omega )}^2
+
\|\naby^2\partial_t \eta \,\naby^2 \eta \|_{L^{2}( \omega )}^2
\big)\dt
\\&+
\int_{I_*}
\big(
 \|\naby^3\partial_t \eta\,\naby \eta\|_{L^{2}( \omega )}^2 
+
\|\naby^2\partial_t \eta \,|\naby \eta|^2\|_{L^{2}( \omega )}^2
\big)\dt
\end{align*}
where using the Banach algebraic property of $W^{2,2}$, interpolation and \eqref{est:reg1} yield
\begin{align*}
\int_{I_*}\Vert \bu\cdot\nabx\bu\Vert_{W^{2,2}(\Oeta)}^2\dt
&\lesssim
\int_{I_*}\Vert \bu \Vert_{W^{1,2}(\Oeta)}^{4/3}
\Vert \bu \Vert_{W^{4,2}(\Oeta)}^{2/3}
\Vert \bu \Vert_{W^{1,2}(\Oeta)}^{2/3}
\Vert \bu \Vert_{W^{4,2}(\Oeta)}^{4/3}\dt
\\&\lesssim
\int_{I_*}\Vert \bu \Vert_{W^{1,2}(\Oeta)}^2
\Vert \bu \Vert_{W^{4,2}(\Oeta)}^2\dt
\\&\leq
\epsilon
\int_{I_*}
\Vert \bu \Vert_{W^{4,2}(\Oeta)}^2\dt.
\end{align*}
We also have
\begin{align*}
\int_{I_*} 
 \| \naby^2\partial_t \eta \,\partial_t \eta \|_{L^{2}( \omega )}^2\dt
 &\lesssim
 \sup_{I_*}
 \Vert \partial_t \eta\Vert_{W^{1,2}(\omega)}^2
 \int_{I_*} 
 \| \partial_t \eta   \|_{W^{3,2}( \omega )}^2\dt
 \\&\leq
 \epsilon
 \int_{I_*} 
\Vert \partial_t\Dely^2\eta \Vert_{L^{2}(\omega)}^2  \dt
\end{align*}
and
\begin{align*}
\int_{I_*}
\big(
\|\naby^2\partial_t \eta \,\naby^2 \eta \|_{L^{2}( \omega )}^2
+&
\|\naby^3\partial_t \eta\,\naby \eta\|_{L^{2}( \omega )}^2 
\big)
\dt
\\&\lesssim
 \sup_{I_*}
 \Vert  \eta\Vert_{W^{3,2}(\omega)}^2
 \int_{I_*} 
 \| \partial_t \eta   \|_{W^{3,2}( \omega )}^2\dt
 \\&\leq
 \epsilon
 \int_{I_*} 
\Vert \partial_t\Dely^2\eta \Vert_{L^{2}(\omega)}^2  \dt
\end{align*}
as well as
\begin{align*}
\int_{I_*}
\|\naby^2\partial_t \eta \,|\naby \eta|^2\|_{L^{2}( \omega )}^2
\dt
&\lesssim
 \Big(\sup_{I_*}
 \Vert  \eta\Vert_{W^{3,2}(\omega)}^2
 \Big)^2
 \int_{I_*} 
 \| \partial_t \eta   \|_{W^{2,2}( \omega )}^2\dt
 \\&\leq
 \epsilon
 \int_{I_*} 
\Vert \partial_t\Dely^2\eta \Vert_{L^{2}(\omega)}^2  \dt.
\end{align*} 
We can, therefore, conclude that
\begin{equation}
\begin{aligned}\label{mR1}
\int_{I_*}\big( \Vert\partial_t\bu\Vert_{W^{2,2}(\Oeta)}^2 +& \Vert\bu\Vert_{W^{4,2}(\Oeta)}^2+\Vert p\Vert_{W^{3,2}(\Oeta)}^2\big)\dt
\\
\lesssim&
\Vert\bu_0\Vert_{W^{3,2}(\Omega_{\eta_0})}^2
+
\Vert\eta_\star\Vert_{W^{3,2}(\omega)}^2
+
\int_{I_*}\big( \Vert\bff\Vert_{W^{2,2}(\Oeta)}^2+\Vert  \underline{\bT}\Vert_{W^{3,2}(\Oeta)}^2  \big)\dt
\\&
+
\int_{I_*}\big(
\Vert \partial_t^2\Dely\eta \Vert_{L^{2}(\omega)}^2 
+
\Vert \partial_t\Dely^2\eta \Vert_{L^{2}(\omega)}^2 \big)\dt.
\end{aligned}
\end{equation}  
Next, we apply Theorem \ref{thm:stokeInhom} to the system
 \begin{align*}
 &\partial_t^2\bu  -\delx \partial_t\bu +\nabx\partial_t p  
=\partial_t\bff +
\divx  \partial_t \underline{\bT}- (\partial_t\bu\cdot \nabx)\mathbf{u} 
 -(\mathbf{u}\cdot \nabx)\partial_t\bu   &\text{ in } I_*\times\Oeta,
\\
&\divx\partial_t\bu=0
&\text{ in } I_*\times\Oeta,
\\&
\partial_t\bu\circ\bm{\varphi}_\eta
=(\partial_t^2 \eta-(\naby\bu\circ\bm{\varphi}_\eta)^\top\partial_t \eta)\bn
&\text{ on } I_*\times \omega 
\end{align*}
with the initial condition \eqref{buTime}. We note that this interface condition above is equivalent to \eqref{interfaceAloneTime}. By Theorem \ref{thm:stokeInhom} and \eqref{initialPressEst}, it follows that
\begin{align*}
\int_{I_*}\big( \Vert\partial_t^2\bu\Vert_{L^{2}(\Oeta)}^2 &+ \Vert \partial_t\bu\Vert_{W^{2,2}(\Oeta)}^2+\Vert \partial_tp\Vert_{W^{1,2}(\Oeta)}^2\big)\dt
\\
\lesssim&
\Vert \eta_\star\Vert_{W^{3,2}(\omega)}^2
 +
 \Vert \eta_0\Vert_{W^{5,2}(\omega)}^2  
     +
  \Vert g(0) \Vert_{W^{1,2}(\omega)}^2  
   \\&
    +
  \Vert \underline{\bT}(0)
  \Vert_{W^{2,2}(\Omega_{\eta_0})}^2
  +
   \Vert \bff(0)\Vert_{W^{1,2}(\Omega_{\eta_0})}^2 
  +
  (1+\epsilon)
 \Vert
  \bu_0\Vert_{W^{3,2}(\Omega_{\eta_0})}^2 
\\&+
\int_{I_*}\big( \Vert\partial_t\bff\Vert_{L^{2}(\Oeta)}^2+\Vert \partial_t \underline{\bT}\Vert_{W^{1,2}(\Oeta)}^2\big)\dt
\\& 
+ 
\int_{I_*}\big(\Vert \partial_t\bu\cdot\nabx\bu\Vert_{L^{2}(\Oeta)}^2
+\Vert \bu\cdot\nabx\partial_t\bu\Vert_{L^{2}(\Oeta)}^2\big)\dt 
\\&
+
\int_{I_*}\big( 
 \|\partial_t^3\eta \|_{L^{2}( \omega )}^2 
+
\|\partial_t^2\eta \|_{W^{2,2}( \omega )}^2
 +
  \| \partial_t^2 \eta\,\partial_t \eta   \|_{L^{2}( \omega )}^2
+
\|\partial_t^2\eta\,\naby^2 \eta \|_{L^{2}( \omega )}^2
\big)\dt
\\&
+
\int_{I_*}\big( 
 \|\naby\partial_t^2 \eta\,\naby \eta\|_{L^{2}( \omega )}^2 
 +
 \|\partial_t^2\eta\,|\naby \eta|^2 \|_{L^{2}( \omega )}^2
  +
\|(\naby\bu\circ\bm{\varphi}_\eta)^\top|\partial_t \eta|^2 \|_{L^{2}( \omega )}^2 
\big)\dt
\\&+
\int_{I_*}
\big( 
\|(\naby\bu\circ\bm{\varphi}_\eta)^\top\partial_t\eta \|_{W^{2,2}( \omega )}^2 
+
 \|(\naby\bu\circ\bm{\varphi}_\eta)^\top\partial_t^2\eta \|_{L^{2}( \omega )}^2 
\big)\dt
\\&+
\int_{I_*}
\big( 
 \|  (\naby\partial_t\bu\circ\bm{\varphi}_\eta)^\top\partial_t\eta \|_{L^{2}( \omega )}^2  
+
\| (\naby\bu\circ\bm{\varphi}_\eta)^\top\partial_t\eta\,\naby^2 \eta \|_{L^{2}( \omega )}^2
\big)\dt
\\&+
\int_{I_*}
\big(
 \|\naby(\naby\bu\circ\bm{\varphi}_\eta)^\top\partial_t \eta\,\naby \eta\|_{L^{2}( \omega )}^2 
  +
 \|(\naby\bu\circ\bm{\varphi}_\eta)^\top\naby\partial_t \eta\,\naby \eta\|_{L^{2}( \omega )}^2 
\big)\dt
\\&+
\int_{I_*}
\big( 
\|(\naby\bu\circ\bm{\varphi}_\eta)^\top\partial_t\eta\,|\naby \eta|^2 \|_{L^{2}( \omega )}^2
\big)\dt
\end{align*}
where we can use \eqref{est:reg1} to obtain
\begin{align*}
\int_{I_*}\big(\Vert \partial_t\bu\cdot\nabx\bu\Vert_{L^{2}(\Oeta)}^2
+\Vert \bu\cdot\nabx\partial_t\bu\Vert_{L^{2}(\Oeta)}^2\big)\dt
&\lesssim
\int_{I_*} \Vert \partial_t\bu\Vert_{W^{2,2}(\Oeta)}^2
\Vert \bu\Vert_{W^{1,2}(\Oeta)}^2
 \dt
\\&\leq
\epsilon
\int_{I_*}
\Vert \partial_t\bu \Vert_{W^{2,2}(\Oeta)}^2\dt.
\end{align*} 
We also have by using \eqref{est:reg1} again
\begin{align*} 
\int_{I_*}
\big(  
\|\partial_t^2\eta\,\partial_t \eta  \|_{L^{2}( \omega )}^2 
+&
\|\partial_t^2\eta\,\naby^2 \eta \|_{L^{2}( \omega )}^2 
+
 \|\naby\partial_t^2 \eta\,\naby \eta\|_{L^{2}( \omega )}^2   
\big)\dt
\\&\lesssim
\Big(\sup_{I_*}\Vert\partial_t\eta\Vert_{W^{1,2}(\omega)}^2 
+
 \sup_{I_*}\Vert\eta\Vert_{W^{3,2}(\omega)}^2 
 \Big)
\int_{I_*}\Vert\partial_t^2\eta\Vert_{W^{2,2}(\omega)}^2 \dt
\\&\leq
\epsilon
\int_{I_*}
\|\partial_t^2\Dely \eta \|_{L^{2}( \omega )}^2 \dt
\end{align*}
and
\begin{align*} 
\int_{I_*} 
\|\partial_t^2\eta\,|\naby \eta|^2 \|_{L^{2}( \omega )}^2 
 \dt
 &\lesssim
 \Big(\sup_{I_*}\Vert\eta\Vert_{W^{3,2}(\omega)}^2 \Big)^2
\int_{I_*}\Vert\partial_t^2\eta\Vert_{W^{2,2}(\omega)}^2 \dt
\\&\leq
\epsilon
\int_{I_*}
\|\partial_t^2\Dely \eta \|_{L^{2}( \omega )}^2 \dt.
\end{align*}
Furthermore, by trace theorem and \eqref{est:reg1}
\begin{align*} 
\int_{I_*} 
\|(\naby\bu\circ\bm{\varphi}_\eta)^\top|\partial_t \eta|^2 \|_{L^{2}( \omega )}^2 
\dt 
&\lesssim
\Big(
\sup_{I_*}\Vert\partial_t\eta\Vert_{W^{1,2}(\omega)}^2
\Big)^2
\int_{I_*}
\Vert \bu \Vert_{W^{1,6}(\partial\Oeta)}^2 \dt
\\&\leq
\epsilon
\int_{I_*}
\Vert \bu \Vert_{W^{4,2}(\Oeta)}^2\dt.
\end{align*}
If we now use that $W^{2,2}$ is a Banach algebra and that the embedding
\begin{align*}
W^{1,2}(I_*;W^{2,2}(\Oeta))\cap L^{2}(I_*;W^{4,2}(\Oeta))
\hookrightarrow
L^{\infty}(I_*;W^{7/2,2}(\Oeta))
\end{align*}
is continuous, we obtain
\begin{align*}
\int_{I_*}\|(\naby\bu\circ\bm{\varphi}_\eta)^\top\partial_t\eta \|_{W^{2,2}( \omega )}^2 \dt
&\lesssim 
\int_{I_*}\Vert \bu \Vert_{W^{3,2}(\partial\Oeta)}^2\| \partial_t\eta \|_{W^{2,2}( \omega )}^2 \dt
\\&
\leq \epsilon
\sup_{I_*}\Vert \bu \Vert_{W^{7/2,2}(\Oeta)}^2 
\\&
\leq \epsilon
\int_{I_*}\Vert\partial_t \bu \Vert_{W^{2,2}(\Oeta)}^2\dt +
\epsilon
\int_{I_*}\Vert \bu \Vert_{W^{4,2}(\Oeta)}^2\dt. 
\end{align*}
Next, we note that
\begin{align*}
\int_{I_*} \|(\naby\bu\circ\bm{\varphi}_\eta)^\top\partial_t^2\eta \|_{L^{2}( \omega )}^2 \dt
&\lesssim
\sup_{I_*}\Vert \partial_t^2\eta\Vert_{L^4(\omega)}^2
\int_{I_*} \| \bu  \|_{W^{1,4}(\partial \Oeta )}^2 \dt
\\
&\lesssim
\sup_{I_*}\Vert \partial_t^2\eta\Vert_{W^{1,2}(\omega)}^2
\int_{I_*} \| \bu  \|_{W^{2,2}(  \Oeta )}^2 \dt
\\&\leq
\epsilon
\sup_{I_*}\Vert \partial_t^2\naby\eta\Vert_{L^{2}(\omega)}^2
\end{align*}
and
\begin{align*} 
\int_{I_*} \|  \naby\partial_t\bu\circ\bm{\varphi}_\eta \partial_t\eta \|_{L^{2}( \omega )}^2 \dt
&\lesssim 
\sup_{I_*}\Vert\partial_t\eta\Vert_{W^{1,2}(\omega)}^2 
\int_{I_*}
\Vert\partial_t \bu \Vert_{W^{1,4}(\partial\Oeta)}^2 \dt
\\&\leq
\epsilon
\int_{I_*}
\Vert \partial_t\bu \Vert_{W^{2,2}(\Oeta)}^2\dt.
\end{align*}
Moreover,
\begin{align*} 
\int_{I_*} 
\|(\naby\bu\circ\bm{\varphi}_\eta)^\top\partial_t\eta\,\naby^2 \eta \|_{L^{2}( \omega )}^2
\dt 
&\lesssim
 \sup_{I_*}\Vert\eta\Vert_{W^{3,2}(\omega)}^2 
\sup_{I_*}\Vert\partial_t\eta\Vert_{W^{1,2}(\omega)}^2
\int_{I_*}
\Vert \bu \Vert_{W^{1,6}(\partial\Oeta)}^2 \dt
\\&\leq
\epsilon
\int_{I_*}
\Vert \bu \Vert_{W^{4,2}(\Oeta)}^2\dt
\end{align*}
and similarly,
\begin{align*}
\int_{I_*}
\big( 
 \|\naby(\naby\bu\circ\bm{\varphi}_\eta)^\top\partial_t \eta\,\naby \eta\|_{L^{2}( \omega )}^2  
\big)\dt
&\lesssim
 \sup_{I_*}\Vert\eta\Vert_{W^{3,2}(\omega)}^2 
\sup_{I_*}\Vert\partial_t\eta\Vert_{W^{1,2}(\omega)}^2
\int_{I_*}
\Vert \bu \Vert_{W^{2,4}(\partial\Oeta)}^2 \dt
\\&\leq
\epsilon
\int_{I_*}
\Vert \bu \Vert_{W^{4,2}(\Oeta)}^2\dt.
\end{align*}
We also have
\begin{align*} 
\int_{I_*} 
 \|(\naby\bu\circ\bm{\varphi}_\eta)^\top\naby\partial_t \eta\,\naby \eta\|_{L^{2}( \omega )}^2 
 \dt 
&\lesssim
 \sup_{I_*}\Vert\eta\Vert_{W^{3,2}(\omega)}^2 
\sup_{I_*}\Vert\partial_t\eta\Vert_{W^{1,2}(\omega)}^2
\int_{I_*}
\Vert \bu \Vert_{W^{1,\infty}(\partial\Oeta)}^2 \dt
\\&\leq
\epsilon
\int_{I_*}
\Vert \bu \Vert_{W^{4,2}(\Oeta)}^2\dt
\end{align*}
as well as
\begin{align*}
\int_{I_*}
\big(  
\|(\naby\bu\circ\bm{\varphi}_\eta)^\top\partial_t\eta\,|\naby \eta|^2 \|_{L^{2}( \omega )}^2
\big)\dt
&\lesssim
\Big(\sup_{I_*}\Vert\eta\Vert_{W^{3,2}(\omega)}^2\Big)^2
\sup_{I_*}\Vert\partial_t\eta\Vert_{W^{1,2}(\omega)}^2
\int_{I_*}
\Vert \bu \Vert_{W^{1,4}(\partial\Oeta)}^2 \dt
\\&\leq
\epsilon
\int_{I_*}
\Vert \bu \Vert_{W^{4,2}(\Oeta)}^2\dt.
\end{align*}
Collecting the estimates above while absorbing any $ \epsilon
\int_{I_*}\Vert\partial_t \bu \Vert_{W^{2,2}(\Oeta)}^2\dt $-term on the right-hand side into the left-hand side, we conclude that
\begin{equation}
\begin{aligned}\label{mR2}
\int_{I_*}\big( \Vert\partial_t^2\bu\Vert_{L^{2}(\Oeta)}^2 &+ \Vert \partial_t\bu\Vert_{W^{2,2}(\Oeta)}^2+\Vert \partial_tp\Vert_{W^{1,2}(\Oeta)}^2\big)\dt
\\
\lesssim&
\Vert \eta_\star\Vert_{W^{3,2}(\omega)}^2
 +
 \Vert \eta_0\Vert_{W^{5,2}(\omega)}^2  
     +
  \Vert g(0) \Vert_{W^{1,2}(\omega)}^2  
   \\&
    +
  \Vert \underline{\bT}(0)
  \Vert_{W^{2,2}(\Omega_{\eta_0})}^2
  +
   \Vert \bff(0)\Vert_{W^{1,2}(\Omega_{\eta_0})}^2 
  + 
 \Vert
  \bu_0\Vert_{W^{3,2}(\Omega_{\eta_0})}^2 
\\&+
\int_{I_*}\big( \Vert\partial_t\bff\Vert_{L^{2}(\Oeta)}^2+\Vert \partial_t \underline{\bT}\Vert_{W^{1,2}(\Oeta)}^2+
 \|\partial_t^3\eta \|_{L^{2}( \omega )}^2 \big)\dt  
\\&
+  
\int_{I_*}
\|\partial_t^2\Dely \eta \|_{L^{2}( \omega )}^2 \dt 
+
\epsilon
\int_{I_*}\Vert \bu \Vert_{W^{4,2}(\Oeta)}^2\dt
 +
\epsilon
\sup_{I_*}\Vert \partial_t^2\naby\eta\Vert_{L^{2}(\omega)}^2.
\end{aligned}
\end{equation} 
If we now make the highest spatial derivative term in \eqref{shellAlone} the subject and use trace theorem, we obtain
\begin{equation}
\begin{aligned}
\label{mR2x}    
\int_{I_*}\Vert \Dely^3\eta\Vert_{L^{2}(\omega)}^2
\lesssim&
\int_{I_*}\big(   \Vert\bu\Vert_{W^{4,2}(\Oeta)}^2+\Vert p\Vert_{W^{3,2}(\Oeta)}^2\big)\dt
\\&+
\int_{I_*} 
\big(
\Vert \partial_t \dely^2 \eta\Vert_{L^2(\omega)}^2 
 +
\Vert  \partial_t^2 \dely \eta\Vert_{L^2(\omega)}^2  
\big)
\dt
\\&+
\int_{I_*} 
\big(
\Vert g\Vert_{W^{2,2}(\omega)}^2 
 +
\Vert  \underline{\mathbb{T}}\Vert_{W^{3,2}(\Oeta)}^2 
\big)
\dt.
\end{aligned}
\end{equation}
By combining \eqref{mR1}-\eqref{mR2x} with \eqref{d4} and absorbing $\epsilon$-terms into the left-hand side,  we conclude that
\begin{equation}
\begin{aligned}
\label{d5}
\sup_{I_*}
\big(
\Vert   \partial_t^2 \naby \eta\Vert_{L^2(\omega)}^2
&+
\Vert   \partial_t \naby\Dely \eta\Vert_{L^2(\omega)}^2
\big) 
 +\int_{I_*}\big(   \Vert\bu\Vert_{W^{4,2}(\Oeta)}^2+\Vert p\Vert_{W^{3,2}(\Oeta)}^2\big)\dt
\\&+
\int_{I_*}\big( \Vert\partial_t^2\bu\Vert_{L^{2}(\Oeta)}^2 + \Vert \partial_t\bu\Vert_{W^{2,2}(\Oeta)}^2+\Vert \partial_tp\Vert_{W^{1,2}(\Oeta)}^2\big)\dt
\\&+
\int_{I_*} 
\big(
\Vert \partial_t \dely^2 \eta\Vert_{L^2(\omega)}^2 
 +
\Vert  \partial_t^2 \dely \eta\Vert_{L^2(\omega)}^2 
+
\Vert  \partial_t^3 \eta\Vert_{L^2(\omega)}^2 
+
\Vert \Dely^3\eta\Vert_{L^{2}(\omega)}^2
\big)
\dt
\\
\leq& \mathcal{D}_2
\end{aligned}
\end{equation} 
where
\begin{align*}
\mathcal{D}_2:=&
c\Vert \eta_\star\Vert_{W^{3,2}(\omega)}^2
 +
 c\Vert \eta_0\Vert_{W^{5,2}(\omega)}^2
  +
c
 \Vert
  \bu_0\Vert_{W^{3,2}(\Omega_{\eta_0})}^2
   \\&
    +
 c \Vert \underline{\bT}(0)
  \Vert_{W^{2,2}(\Omega_{\eta_0})}^2
  +
   c\Vert \bff(0)\Vert_{W^{1,2}(\Omega_{\eta_0})}^2   
     +
  c\Vert g(0) \Vert_{W^{1,2}(\omega)}^2  
   \\&+
   c
   \int_{I_*} \big(\Vert \bff\Vert_{W^{2,2}(\Oeta)}^2
  +
  \Vert g \Vert_{W^{2,2}(\omega)}^2
  +
  \Vert\partial_tg\Vert_{L^2(\omega)}^2
  +
  \Vert  \underline{\bT}\Vert_{W^{3,2}(\Omega_\eta)}^2
      \big)
\dt
\\&
+
c
\int_{I_*}\big( \Vert\partial_t\bff\Vert_{L^{2}(\Oeta)}^2+\Vert \partial_t \underline{\bT}\Vert_{W^{1,2}(\Oeta)}^2\big)\dt.
\end{align*}
Finally, due to the continuity of the following embedding
\begin{align*}
L^2(I_*;W^{6,2}(\omega)) \cap W^{1,2}(I_*;W^{4,2}(\omega)) \hookrightarrow L^\infty(I_*;W^{5,2}(\omega)),
\end{align*}
we can also bound $\eta$ in the latter space in terms of $\mathcal{D}_2$. This finishes the proof.
\end{proof}

We now have all the required estimates for the fluid-structure subproblem so we proceed to the construction of a strong solution for the solute subproblem. Afterwards, we will use a fixed-point argument to close the full system.
\subsection{The solute subproblem}

For a known flexible domain $\Omega_\zeta$ and a known solenoidal vector field $\bv$, we aim in this section to construct a strong solution of the following subproblem:
\begin{align}  
\partial_t \bT + (\bv\cdot \nabx) \bT
=
(\nabx \bv)\bT + \bT(\nabx \bv)^\top+2\mathbb{D}(\nabx \bv) - 2 \bT\label{tEquAlone} 
\end{align}
on $I\times\Omega_\zeta\subset \mathbb R^{1+3}$ subject to the following initial condition
\begin{align} 
&\bT(0,\cdot)=\bT_0(\cdot) &\text{in }\Omega_{\zeta(0)}.
\label{initialCondSolvSubPro}  
\end{align} 
Let us start with a precise definition of what we mean by a strong solution.  
\begin{definition}
\label{def:strsolmartFP}
Assume that  $(\bT_0, \bv, \zeta)$ satisfies
\begin{equation}
\begin{aligned}
\label{fokkerPlanckDataAlone}
&\bT_0 \in  W^{3,2}( \Omega_{\zeta(0)}), 
\\&
\bv\in
W^{1,2} \big(I; W^{2,2}_{\divx}(\Omega_\zeta ) \big)\cap
L^2\big(I;W^{4,2}(\Omega_\zeta )  \big),
\\
&\zeta\in W^{3,2}\big(I;L^{2}(\omega)  \big)\cap L^{2}\big(I;W^{6,2}(\omega)  \big)
,
\\& \bv  \circ \bm{\varphi}_{\zeta} =(\partial_t\zeta)\bn
\quad \text{on }I \times \omega,  \quad\|\zeta\|_{L^\infty(I\times\omega)}<L.
\end{aligned}
\end{equation}
We call
$\bT$
a \textit{strong solution} of   \eqref{tEquAlone}-\eqref{initialCondSolvSubPro} with dataset $(\bT_0, \bv, \zeta)$ if 
\begin{itemize}
\item[(a)] $\bT \in   W^{1,\infty} \big(I; W^{2,2}(\Omega_\zeta ) \big)\cap L^\infty\big(I;W^{3,2}(\Omega_\zeta )  \big)$; 
\item[(b)] Equation \eqref{tEquAlone} is satisfied a.e. in $I\times\Omega_\zeta$.
\end{itemize}
\end{definition}
\noindent We now formulate our result on the existence of a  unique strong solution of  \eqref{tEquAlone}-\eqref{initialCondSolvSubPro}.
\begin{theorem}\label{thm:mainFP}
Let  $(\bT_0,  \bv, \zeta)$ satisfy  \eqref{fokkerPlanckDataAlone}.
Then there is a unique strong solution $\bT $ of  \eqref{tEquAlone}-\eqref{initialCondSolvSubPro}, in the sense of Definition \ref{def:strsolmartFP},
 such that
\begin{equation}
\begin{aligned} \label{thm:mainFPext}
\sup_{t\in I} \Big( \Vert \bT(t)\Vert_{W^{3,2}(\Ozeta)}^2 
&+
\Vert \partial_t\bT(t)\Vert_{W^{2,2}(\Ozeta)}^2 
\Big) 
\\&
\lesssim 
 \Big(
  \Vert  \bT_0\Vert_{W^{3,2}\Omega_{\zeta(0)})}^2 
   +
 \int_I
\Vert  \bv\Vert_{W^{4,2}(\Ozeta)}^2\dt
+
\int_I\Vert  \partial_t\bv
\Vert_{W^{2,2}(\Ozeta)}^2
\Big)  
\\&\qquad
\times
\Big(1+\int_I\Vert  \bv
\Vert_{W^{4,2}(\Ozeta)}^2
+
\int_I\Vert  \partial_t\bv
\Vert_{W^{2,2}(\Ozeta)}^2
\bigg)
  e^{c\int_I
\Vert  \bv\Vert_{W^{4,2}(\Ozeta)}  \dt} 
\end{aligned}
\end{equation} 
holds.
\end{theorem}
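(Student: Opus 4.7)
The plan is to circumvent the non-commutativity of matrix products in the convected derivative term $\bT(\nabx \bv)^\top$ by \emph{vectorizing} the tensor equation. Setting $\mathbf{s}:=\mathrm{vec}(\bT)\in\R^9$ (stacking columns) and using the identities $\mathrm{vec}(A\bT)=(\mathbb{I}\otimes A)\mathbf{s}$ and $\mathrm{vec}(\bT A^\top)=(A\otimes\mathbb{I})\mathbf{s}$ with $A=\nabx\bv$, the system \eqref{tEquAlone} becomes the linear vector-valued transport equation
\begin{align*}
\partial_t \mathbf{s} + (\bv\cdot\nabx)\mathbf{s}=\mathcal{M}(\nabx\bv)\,\mathbf{s}+\mathbf{f}(\nabx\bv),
\end{align*}
with $\mathcal{M}(\nabx\bv):=(\mathbb{I}\otimes\nabx\bv)+(\nabx\bv\otimes\mathbb{I})-2\mathbb{I}_9$ and source $\mathbf{f}(\nabx\bv):=2\,\mathrm{vec}(\mathbb{D}(\nabx\bv))$. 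In this form the only differential operator acting on the unknown is the first-order transport $\bv\cdot\nabx$, and the matrix $\mathcal{M}$ acts linearly on $\mathbf{s}$; this reorganization makes the method of characteristics applicable while preserving the full regularity needed at the tensor level.

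Since $\divx\bv=0$ and $\bv\circ\bm{\varphi}_\zeta=(\partial_t\zeta)\bn$, the flow $\Phi_t$ of $\bv$ defined by $\dot{\Phi}_t(x)=\bv(t,\Phi_t(x))$, $\Phi_0=\mathrm{id}$, maps $\Omega_{\zeta(0)}$ diffeomorphically onto $\Omega_{\zeta(t)}$, the regularity $\bv\in L^2(I;W^{4,2})\cap W^{1,2}(I;W^{2,2})\hookrightarrow L^1(I;W^{1,\infty})$ being sufficient to make $\Phi_t$ well-defined and invertible. Along each characteristic the vectorized equation collapses to the linear ODE $\dot\sigma(t)=\mathcal{M}(t)\sigma(t)+\mathbf{f}(t)$ with $\sigma(0)=\mathrm{vec}(\bT_0)(x)$, whose unique Duhamel representation $\sigma(t)=U(t,0)\mathrm{vec}(\bT_0)(x)+\int_0^t U(t,s)\mathbf{f}(s)\ds$ in terms of the $9\times 9$ resolvent $U$ of $\mathcal{M}$ yields $\mathrm{vec}(\bT)(t,y)=\sigma(t)\big|_{x=\Phi_t^{-1}(y)}$ as the candidate solution on $\Omega_{\zeta(t)}$, giving existence and uniqueness once continuity of the characteristic flow is established.

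For the quantitative bound \eqref{thm:mainFPext}, I would apply $\nabx^\alpha$ with $|\alpha|\leq 3$ to \eqref{tEquAlone}, test against $\nabx^\alpha\bT$, and integrate on $\Omega_{\zeta(t)}$. By Reynolds' transport theorem, $\divx\bv=0$ and the matching $\bv\cdot\bn_\zeta=V\cdot\bn_\zeta$ on $\partial\Omega_\zeta$, the transport and boundary contributions cancel, leaving
\begin{align*}
\tfrac{1}{2}\tfrac{d}{dt}\Vert\nabx^\alpha\bT\Vert_{L^2(\Omega_\zeta)}^2=\int_{\Omega_\zeta}\nabx^\alpha\bT:\nabx^\alpha\big((\nabx\bv)\bT+\bT(\nabx\bv)^\top+2\mathbb{D}(\nabx\bv)-2\bT\big)\dx-\int_{\Omega_\zeta}\nabx^\alpha\bT:[\nabx^\alpha,\bv\cdot\nabx]\bT\dx.
\end{align*}
Since $W^{3,2}(\Omega_\zeta)$ is a Banach algebra in three dimensions, Kato--Ponce-type commutator estimates combined with standard 3D Sobolev embeddings yield a right-hand side controlled by $c\,\Vert\bv\Vert_{W^{4,2}}(1+\Vert\bT\Vert_{W^{3,2}}^2)$; summing over $|\alpha|\leq 3$ and applying Gronwall's inequality then delivers the $L^\infty(I;W^{3,2})$ bound on $\bT$ with the exponential factor $\exp(c\int_I\Vert\bv\Vert_{W^{4,2}}\dt)$.

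For the $W^{2,2}$-bound on $\partial_t\bT$ I would read it directly off the equation
\begin{align*}
\partial_t\bT=-(\bv\cdot\nabx)\bT+(\nabx\bv)\bT+\bT(\nabx\bv)^\top+2\mathbb{D}(\nabx\bv)-2\bT
\end{align*}
and use the algebra property of $W^{2,2}$ in 3D to get $\Vert\partial_t\bT\Vert_{W^{2,2}}\lesssim\Vert\bv\Vert_{W^{3,2}}(1+\Vert\bT\Vert_{W^{3,2}})$; the continuous embedding $W^{1,2}(I;W^{2,2})\cap L^2(I;W^{4,2})\hookrightarrow C(\overline I;W^{3,2})$ then controls $\sup_I\Vert\bv\Vert_{W^{3,2}}^2$ by $\int_I(\Vert\bv\Vert_{W^{4,2}}^2+\Vert\partial_t\bv\Vert_{W^{2,2}}^2)\dt$, which is exactly where the $\int_I\Vert\partial_t\bv\Vert_{W^{2,2}}^2\dt$ contribution enters the final estimate. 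The main obstacle is the absence of diffusion: the purely hyperbolic structure of \eqref{tEquAlone} forces a one-derivative loss on $\bv$ per derivative of $\bT$, so \emph{all} products and commutators must be handled at the $W^{3,2}$ level where the algebra property barely holds in three dimensions; the vectorization is crucial because it converts the right-multiplication $\bT(\nabx\bv)^\top$, which blocks a direct characteristic argument for the tensor, into a harmless linear algebraic action on $\mathbf{s}$.
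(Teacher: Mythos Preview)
Your proposal is correct and follows essentially the same approach as the paper: vectorize the tensor equation to convert the right-multiplication $\bT(\nabx\bv)^\top$ into a linear action on a vector in $\R^9$, solve along characteristics, and then obtain the $W^{3,2}$ estimate via $\partial_\bx^\alpha$-differentiation, commutator bounds, Reynolds transport and Gronwall, finally reading off $\partial_t\bT$ from the equation using the algebra property of $W^{2,2}$ and the embedding $W^{1,2}(I;W^{2,2})\cap L^2(I;W^{4,2})\hookrightarrow L^\infty(I;W^{3,2})$. Your use of the Kronecker-product identities and the Duhamel resolvent $U(t,s)$ is in fact slightly cleaner than the paper's explicit $9\times9$ block matrix and matrix integrating factor (the latter being only formally valid when the $\mathbb{W}(t)$ commute).
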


\begin{proof}
First of all, we note that given the regularity of the input $\zeta$, the embedding
\begin{align*}
W^{3,2}\big(I;L^{2}(\omega)  \big)\cap L^{2}\big(I;W^{6,2}(\omega)  \big)
\hookrightarrow C^{1,\beta}(I\times \omega)
\end{align*}
is continuous for all $\beta<\tfrac{1}{2}$, and so $\partial\Ozeta$ is of class $C^{1,\beta}$. The shell is thus, better than Lipschitz.
 
We wish to apply the method of characteristic to \eqref{tEquAlone}-\eqref{initialCondSolvSubPro}. Because the term $\bT(\nabx\bv)^\top$ is not in the right form and we are unable to rewrite it as $\mathbb{A}\bT$ for some matrix $\mathbb{A}\in \R^{3\times 3}$, we instead transform the matrix $\bT \in\R^{3\times 3}$ into a vector  $ \mathbf{T}\in\R^{9}$ by way of the mapping
\begin{align*}
\begin{pmatrix}
T_{11}&T_{12}&T_{13}\\
T_{21}&T_{22}&T_{23}\\
T_{31}&T_{32}&T_{33}
\end{pmatrix} 
\mapsto
(T_{11},T_{12},T_{13},T_{21},T_{22},T_{23},T_{31},T_{32}T_{33})^\top.
\end{align*}
In so doing, the mapping $\bT  \mapsto\mathbf{T} $ transforms \eqref{tEquAlone}-\eqref{initialCondSolvSubPro}   into the  vector-valued system
\begin{equation}
\begin{aligned}  
\label{vectorT}  
&\frac{\dd\mathbf{T} }{\dt}  
+ (\bv\cdot \nabx) \mathbf{T} 
=
(\mathbb{W}   - 2\mathbb{I}) \mathbf{T} +\mathbf{W} 
&\text{in }I\times\Omega_{\zeta}, 
\\
&\mathbf{T} (0,\bx)=\mathbf{T} _0(\bx) &\text{in }\Omega_{\zeta(0)}
\end{aligned} 
\end{equation}
which is in a  desired form.
Here,  we use the fact that the  tensor $[(\nabx \bv)\bT + \bT(\nabx \bv)^\top ]\in\R^{3\times3} $ transforms to a vector $\mathbb{W} \mathbf{T}  \in\R^9 $ 
where $\mathbb{W} \in\R^{9\times9}$ is the block matrix given by
\begin{align*}
\mathbb{W} =
\begin{pmatrix}
\mathbb{W}_{11}&\mathbb{W}_{12}&\mathbb{W}_{13}\\
\mathbb{W}_{21}&\mathbb{W}_{22}&\mathbb{W}_{23}\\
\mathbb{W}_{31}&\mathbb{W}_{32}&\mathbb{W}_{33}
\end{pmatrix}_{9\times9}
\end{align*}
with sub entries
\begin{align*}
&\mathbb{W}_{11}
=
\begin{pmatrix}
2\partial_1v_1&\partial_1v_2&\partial_1v_3\\
\partial_2v_1&\partial_1v_1+\partial_2v_2&\partial_2v_3\\
\partial_3v_1&\partial_3v_2&\partial_1v_1+\partial_3v_3
\end{pmatrix},
&\mathbb{W}_{12}
=
\begin{pmatrix}
\partial_1v_2&0&0\\
0&\partial_1v_2&0\\
0&0&\partial_1v_2
\end{pmatrix},
\\
&\mathbb{W}_{13}
=
\begin{pmatrix}
\partial_1v_3&0&0\\
0&\partial_1v_3&0\\
0&0&\partial_1v_3
\end{pmatrix},
&\mathbb{W}_{21}
=
\begin{pmatrix}
\partial_2v_1&0&0\\
0&\partial_2v_1&0\\
0&0&\partial_2v_1
\end{pmatrix},
\\
&\mathbb{W}_{22}
=
\begin{pmatrix}
\partial_1v_1+\partial_2v_2&\partial_1v_2&\partial_1v_3\\
\partial_2v_1&2\partial_2v_2&\partial_2v_3\\
\partial_3v_1&\partial_3v_2&\partial_2v_2+\partial_3v_3
\end{pmatrix},
&\mathbb{W}_{23}
=
\begin{pmatrix}
\partial_2v_3&0&0\\
0&\partial_2v_3&0\\
0&0&\partial_2v_3
\end{pmatrix},
\\
&\mathbb{W}_{31}
=
\begin{pmatrix}
\partial_3v_1&0&0\\
0&\partial_3v_1&0\\
0&0&\partial_3v_1
\end{pmatrix},
&\mathbb{W}_{32}
=
\begin{pmatrix}
\partial_3v_2&0&0\\
0&\partial_3v_2&0\\
0&0&\partial_3v_2
\end{pmatrix},
\\&\mathbb{W}_{33}
=
\begin{pmatrix}
\partial_1v_1+\partial_3v_3&\partial_1v_2&\partial_1v_3\\
\partial_2v_1&\partial_2v_2+\partial_3v_3&\partial_2v_3\\
\partial_3v_1&\partial_3v_2&2\partial_3v_3
\end{pmatrix}.
\end{align*}
Also, the  symmetric  tensor $2\mathbb{D}(\nabx \bv) \in\R^{3\times3}$ transforms to a vector $\mathbf{W} \in\R^9 $
given by
\begin{align*}
\mathbf{W} 
=(
2\partial_1v_1,
\partial_1v_2+
\partial_2v_1,
\partial_1v_3+
\partial_3v_1,
\partial_1v_2+
\partial_2v_1,
2\partial_2v_2,
\partial_2v_3+
\partial_3v_2,
\partial_1v_3+
\partial_3v_1,
\partial_2v_3+
\partial_3v_2,
2\partial_3v_3)^\top.
\end{align*}
With these definitions, we directly obtain the following equivalent norm relations
\begin{align}
&\Vert\mathbf{T}\Vert_{W^{s,p}(\Ozeta)}\sim \Vert\bT\Vert_{W^{s,p}(\Ozeta)},\label{equivNorm0}
\\
&\Vert\mathbf{W}\Vert_{W^{s,p}(\Ozeta)}\sim \Vert\mathbb{W}\Vert_{W^{s,p}(\Ozeta)}\sim \Vert\bv\Vert_{W^{s+1,p}(\Ozeta)}.\label{equivNorm}
\end{align}
for any $s\geq0$ and $p\geq1$.
We now apply the method of characteristics to \eqref{vectorT}. Here, we note that
due to the kinematically compatible  condition
$\bv  \circ \bm{\varphi}_{\zeta} =(\partial_t\zeta)\bn$  at the boundary, a characteristic curve stays in the moving domain for all times.
%
%
%
%
%
Thus, we define $\bm{\Phi}(t,\bx)$ to be the unique solution (follows from Picard--Lindel\"of) to the equation:
\begin{align}
&\frac{\dd \bm{\Phi}(t,\bx)}{\dt }=\bv(t ,\bm{\Phi}(t,\bx)),
\\
&\bm{\Phi}(0,\bx)= \bx
\end{align}
so that $\mathbf{T} $ solves
\begin{equation}
\begin{aligned}  
\label{vectorT1}  
&\frac{\dd\mathbf{T} }{\dt}   
=
\big(\mathbb{W}(t ,\bm{\Phi}(t,\bx))   - 2\mathbb{I}\big) \mathbf{T}(t ,\bm{\Phi}(t,\bx)) +\mathbf{W}(t ,\bm{\Phi}(t,\bx)) 
&\text{in }I\times\Omega_{\zeta} ,
\\
&\mathbf{T} (0 ,\bm{\Phi}(0,\bx))=\mathbf{T} _0(\bx) &\text{in }\Omega_{\zeta(0)}. 
\end{aligned} 
\end{equation}
More precisely, the solution to \eqref{vectorT1}  is obtained by employing the integrating factor
\begin{align*}
e^{\int_0^t\big(2\mathbb{I}-\mathbb{W}(t' ,\bm{\Phi}(t',\bx))   \big) \dt'}
\end{align*}
leading to
\begin{align}
\label{vectorT2}  
\mathbf{T}(t ,\bx) = \frac{ 
\mathbf{T}_0(\bx )
+
\int_0^t
e^{ \int_0^{t'}(2\mathbb{I}-\mathbb{W}(s ,\bm{\Phi}(s,\bx)) )\dd s} 
\mathbf{W}(t' ,\bm{\Phi}(t',\bx))
\dt'}{e^{ \int_0^t (2\mathbb{I}-\mathbb{W}(t' ,\bm{\Phi}(t',\bx)) )\dd t'}}.
\end{align}
Note that uniqueness of the solution \eqref{vectorT2}  is a direct consequence of the linearity of \eqref{vectorT1}  since the difference of any two solution will satisfy the easier homogeneous version of \eqref{vectorT1} with
\begin{align*}
\mathbf{W}(t ,\bm{\Phi}(t,\bx)) \equiv0, \qquad \mathbf{T}_0(\bx)=0.
\end{align*}
Now observe that if $\bv   \in L^1(I;W^{4,2}(\Ozeta))$, then by
\eqref{equivNorm},  $ \mathbb{W}   \in L^1(I;W^{3,2}(\Ozeta))$. Thus, we can use Jensen's inequality and convexity of the exponential function to conclude that
\begin{align*}
\Vert e^{ \pm\int_0^t  (2\mathbb{I}-\mathbb{W}(t' ,\bm{\Phi}(t',\bx)) )\dd t'} \Vert_{W^{3,2}(\Ozeta)} \lesssim 1
\end{align*}
for $\bv   \in L^1(I;W^{4,2}(\Ozeta))$.
Consequently,  since $W^{3,2}(\Ozeta)$ is a Banach algebra, we obtain from \eqref{vectorT2} that
\begin{align}
\label{vectorT3}  
\mathbf{T}\in C(\overline{I};W^{3,2}(\Ozeta))
\end{align}
provided $\mathbf{T}_0\in W^{3,2}(\Omega_{\zeta_0})$ and from \eqref{vectorT3} and \eqref{equivNorm0}, we finally obtain the existence of the unique strong solution
\begin{align}
\label{vectorT5}  
\bT\in C(\overline{I};W^{3,2}(\Ozeta)).
\end{align}
 For the estimate \eqref{thm:mainFPext}, we consider all multi-indexes satisfying $|\alpha|\leq 3$ and apply $\partial_\bx^\alpha$ to \eqref{tEquAlone}. This results in
\begin{equation}
\begin{aligned}
\label{partialAlphaT}
\partial_t \partial_\bx^\alpha\bT + (\bv\cdot \nabx) \partial_\bx^\alpha\bT
=&
(\nabx \bv)\partial_\bx^\alpha\bT + (\partial_\bx^\alpha\bT)(\nabx \bv)^\top+2\partial_\bx^\alpha\mathbb{D}(\nabx \bv) \\&- 2 \partial_\bx^\alpha\bT+R_1+R_2+R_3 
\end{aligned}
\end{equation}
where
\begin{align*}
R_1
    &=
(\bv\cdot \nabx)\partial_\bx^\alpha \bT
-
\partial_\bx^\alpha((\bv\cdot \nabx) \bT),
\\
R_2
    &=
 (\partial_\bx^\alpha\bT)(\nabx \bv)^\top
-
\partial_\bx^\alpha(\bT(\nabx \bv)^\top ),
\\
R_3
    &=
(\nabx \bv) (\partial_\bx^\alpha\bT) 
-
\partial_\bx^\alpha((\nabx \bv)\bT  )
\end{align*}
are commutators which can be estimated using the  commmutator estimate  \cite[(2.3)]{glatt2014local} to obtain
\begin{align*}
\Vert R_1\Vert_{L^2(\Ozeta)}^2
&\lesssim
\Vert  \bT\Vert_{W^{3,2}(\Ozeta)}\Vert  \bv\Vert_{W^{3,2}(\Ozeta)},
\\
\Vert R_2\Vert_{L^2(\Ozeta)}^2+
\Vert R_3\Vert_{L^2(\Ozeta)}^2
&\lesssim
\Vert  \bT\Vert_{W^{2,2}(\Ozeta)}\Vert  \bv\Vert_{W^{4,2}(\Ozeta)}.
\end{align*}
Thus, if we now test \eqref{partialAlphaT} with $\partial_\bx^\alpha\bT$ and use Reynold's transport theorem, we obtain
\begin{align*}
 \Vert \partial_\bx^\alpha\bT(t)\Vert_{L^2(\Ozeta)}^2 
 +\int_0^t\Vert \partial_\bx^\alpha\bT\Vert_{L^2(\Ozeta)}^2 \dt'
&\lesssim
 \Vert \partial_\bx^\alpha\bT_0\Vert_{L^2(\Omega_{\zeta(0)})}^2 
 +
 \int_I
\Vert  \bv\Vert_{W^{4,2}(\Ozeta)}^2\dt
\\&\quad
 +
 \int_I
\Vert  \bv\Vert_{W^{4,2}(\Ozeta)}  \Vert  \bT\Vert_{W^{3,2}(\Ozeta)}^2\dt.
\end{align*}
If we sum the estimate above over all $|\alpha|\leq3$, we obtain by Gr\"onwall's lemma that
\begin{align*}
  \sup_I \Vert  \bT(t)\Vert_{W^{3,2}(\Ozeta)}^2 
  \lesssim
  \Big(
  \Vert  \bT_0\Vert_{W^{3,2}\Omega_{\zeta(0)})}^2 
   +
 \int_I
\Vert  \bv\Vert_{W^{4,2}(\Ozeta)}^2\dt
\Big)
  e^{c\int_I
\Vert  \bv\Vert_{W^{4,2}(\Ozeta)}  \dt}.
\end{align*}
By returning to the original equation  \eqref{tEquAlone}, it follows that  
  \begin{align*}
\sup_I\Vert\partial_t\bT\Vert_{W^{2,2}(\Ozeta)}^2
     &\lesssim 
\sup_I\Vert \bT \Vert_{W^{3,2}(\Ozeta)}^2
\Big(1+ \sup_I\Vert  \bv
\Vert_{W^{3,2}(\Ozeta)}^2 
\Big)
+\sup_I\Vert  \bv 
\Vert_{W^{3,2}(\Ozeta)}^2 
\\&
\lesssim 
 \Big(
  \Vert  \bT_0\Vert_{W^{3,2}\Omega_{\zeta(0)})}^2 
   +
 \int_I
\Vert  \bv\Vert_{W^{4,2}(\Ozeta)}^2\dt
\Big)  
\\&\quad\times
\Big(1+
 \sup_I\Vert  \bv
\Vert_{W^{3,2}(\Ozeta)}^2 
\Big)
  e^{c\int_I
\Vert  \bv\Vert_{W^{4,2}(\Ozeta)}  \dt}
+
\sup_I\Vert  \bv 
\Vert_{W^{3,2}(\Ozeta)}^2
\\&
\lesssim 
 \Big(
  \Vert  \bT_0\Vert_{W^{3,2}\Omega_{\zeta(0)})}^2 
   +
 \int_I
\Vert  \bv\Vert_{W^{4,2}(\Ozeta)}^2\dt
\Big)  
\\&\quad\times
\Big(1+\int_I\Vert  \bv
\Vert_{W^{4,2}(\Ozeta)}^2
+
\int_I\Vert  \partial_t\bv
\Vert_{W^{2,2}(\Ozeta)}^2
\bigg)
  e^{c\int_I
\Vert  \bv\Vert_{W^{4,2}(\Ozeta)}  \dt}
\\&
\qquad+
 \int_I\Vert  \bv
\Vert_{W^{4,2}(\Ozeta)}^2
+
\int_I\Vert  \partial_t\bv
\Vert_{W^{2,2}(\Ozeta)}^2
 \end{align*}
where we have used the continuous embedding
\begin{align*}
    L^2(I;W^{4,2}(\Ozeta))\cap 
    W^{1,2}(I;W^{2,2}(\Ozeta))
    \hookrightarrow
    W^{2/3,2}(I;W^{7/2,2}(\Ozeta))
    \hookrightarrow
    L^{\infty}(I;W^{3,2}(\Ozeta))
\end{align*}
in the last step. Combining the two supremum estimates above yield 
\begin{align*} 
\sup_{t\in I} \Big( \Vert \bT(t)\Vert_{W^{3,2}(\Ozeta)}^2 
+
\Vert \partial_t\bT(t)\Vert_{W^{2,2}(\Ozeta)}^2 
\Big)
\lesssim& 
 \int_I\Vert  \bv
\Vert_{W^{4,2}(\Ozeta)}^2
+
\int_I\Vert  \partial_t\bv
\Vert_{W^{2,2}(\Ozeta)}^2
\\&+ 
 \Big(
  \Vert  \bT_0\Vert_{W^{3,2}\Omega_{\zeta(0)})}^2 
   +
 \int_I
\Vert  \bv\Vert_{W^{4,2}(\Ozeta)}^2\dt
\Big) 
\\&\quad\times 
\Big(1+\int_I\Vert  \bv
\Vert_{W^{4,2}(\Ozeta)}^2
+
\int_I\Vert  \partial_t\bv
\Vert_{W^{2,2}(\Ozeta)}^2
\bigg)
\\&\quad\times
  e^{c\int_I
\Vert  \bv\Vert_{W^{4,2}(\Ozeta)}  \dt} 
\end{align*} 
%
which can further be bounded by the desired simplified right-hand side given in
\eqref{thm:mainFPext}. This completes the proof.
\end{proof}

\section{The fully coupled system}
\label{sec:fully}
To construct a local-in-time solution for the fully coupled system, we resort to a fixed-point argument. The existence of such a fixed-point will follow from closedness and contraction properties for a suitable map. More precisely, to show the closedness property, we consider the mapping $\mathtt{T}=\mathtt{T}_1\circ\mathtt{T}_2$ with
\begin{align*}
\mathtt{T}(\underline{\bT})= \bT , \qquad \mathtt{T}_2(\underline{\bT} )=(\eta,\bu,p), \qquad \mathtt{T}_1(\eta,\bu,p)= \bT 
\end{align*}
defined on the space
\begin{align*}
X_\eta&:=L^\infty (I_*;W^{3,2}(\Oeta)  )
\cap W^{1,\infty} (I_*;W^{2,2}(\Oeta)  ), 
\end{align*}
equipped with its canonical norm $\Vert \cdot\Vert_{X_\eta}$ and where $I_*$ is the local time yet to be determined. 
We now consider the subset 
\begin{align*}
B_R:=\big\{  \underline{\bT} \in X_\eta  \,:\,\Vert \underline{\bT} \Vert_{X_\eta  }^2 \leq R^2  \big\}.
\end{align*}
and show that $\mathtt{T}:X_\eta \rightarrow X_\eta$ maps $B_R$ into $B_R$, i.e., for any $ \underline{\bT} \in   B_R$, we have that 
\begin{align}
\label{balltoball}
\Vert  \bT \Vert_{X_\eta }^2=\Vert \mathtt{T}(\underline{\bT} )\Vert_{X_\eta }^2=\Vert \mathtt{T}_1\circ \mathtt{T}_2(\underline{\bT} )\Vert_{X_\eta }^2=\Vert \mathtt{T}_1(\eta,\bu, p) \Vert_{X_\eta }^2 \leq R^2.
\end{align}
Indeed, for 
\begin{align*}
(\eta,\bu)\in&  
W^{1,\infty}\big(I_*;W^{3,2}(\omega)  \big)\cap L^{\infty}\big(I_*;W^{5,2}(\omega)  \big) \cap  W^{3,2}\big(I_*; L^{2}(\omega)  \big)\cap  W^{2,2}\big(I_*;W^{2,2}(\omega)  \big) \cap  L^2\big(I_*;W^{6,2}(\omega)  \big)
\\&\times
W^{2,2} \big(I_*; L^{2}_{\divx}(\Oeta ) \big)\cap W^{1,2} \big(I_*; W^{2,2}(\Oeta ) \big)\cap L^2\big(I_*;W^{4,2}(\Oeta)  \big), 
\end{align*}
let $\bT$ be the unique strong solution of \eqref{tEquAlone}-\eqref{initialCondSolvSubPro} with dataset $( \bT_0, \bu, \eta)$ as shown in Theorem \ref{thm:mainFP}. 
As shown in Theorem \ref{thm:mainFP}, the tensor $\bT$ will satisfy 
\begin{equation}
\begin{aligned} 
\label{tXeta}
\Vert  \bT \Vert_{X_\eta }^2&
\lesssim 
  \Big(
  \Vert  \bT_0\Vert_{W^{3,2}\Omega_{\eta_0})}^2 
   +
\int_{I_*}\Vert  \bu
\Vert_{W^{4,2}(\Oeta)}^2\dt
+
\int_{I_*}\Vert \partial_t \bu
\Vert_{W^{2,2}(\Oeta)}^2\dt
\Big)
\\&\qquad
\times
  \Big(
 1  +
\int_{I_*}\Vert  \bu
\Vert_{W^{4,2}(\Oeta)}^2\dt
+
\int_{I_*}\Vert \partial_t \bu
\Vert_{W^{2,2}(\Oeta)}^2\dt
\Big)  e^{c\int_{I_*}
\Vert  \bu\Vert_{W^{4,2}(\Oeta)}  \dt}.
\end{aligned}
\end{equation}    
On the other hand, for $ \underline{\bT}\in B_R$,   let $( \eta, \bu,  p )$ be the unique strong solution  of  \eqref{divfreeAlone}--\eqref{interfaceAlone} with data $(\bff, g, \eta_0, \eta_\star, \bu_0, \underline{\bT})$ as shown in  Theorem \ref{thm:BM}. Then by Proposition \ref{prop:BM}, we can control 
\begin{equation}
\begin{aligned}
\label{twoVeloOneShell}  
\int_{I_*}\Vert  \bu
\Vert_{W^{4,2}(\Oeta)}^2\dt
+
\int_{I_*}\Vert \partial_t \bu
\Vert_{W^{2,2}(\Oeta)}^2\dt
\end{aligned}
\end{equation}
in terms of
\begin{equation}
\begin{aligned}\nonumber
\mathcal{E}(\mathrm{data})
+
\int_{I_*} 
\Big( 
\Vert \underline{\bT}
\Vert_{W^{3,2}(\Oeta)}^2
+
\Vert  \partial_t\underline{\bT}
\Vert_{W^{1,2}(\Oeta)}^2
\Big)
\dt,
\end{aligned}
\end{equation} 
where
\begin{equation}
\begin{aligned}\nonumber
\mathcal{E}(\mathrm{data})
&:=
\Vert \eta_\star\Vert_{W^{3,2}(\omega)}^2
 +
 \Vert \eta_0\Vert_{W^{5,2}(\omega)}^2
  +
 \Vert
  \bu_0\Vert_{W^{3,2}(\Omega_{\eta_0})}^2
   \\&
    +
  \Vert \underline{\bT}(0)
  \Vert_{W^{2,2}(\Omega_{\eta_0})}^2
  +
   \Vert \bff(0)\Vert_{W^{1,2}(\Omega_{\eta_0})}^2    
   \\&+
   \int_{I_*} \big(\Vert g \Vert_{W^{2,2}(\omega)}^2
  +
\Vert \partial_t g \Vert_{L^{2}(\omega)}^2
    \big)
\dt
   \\&+
   \int_{I_*} \big( \Vert \bff\Vert_{W^{2,2}(\Oeta)}^2
  +
 \Vert \partial_t\bff\Vert_{L^2(\Oeta)}^2
    \big)
\dt.
\end{aligned}
\end{equation}  
However, by using H\"older's inequality in time, we obtain 
\begin{equation}
\begin{aligned}
\label{aprioriFP1}
\int_{I_*} 
\Big( 
\Vert \underline{\bT}
\Vert_{W^{3,2}(\Oeta)}^2
+
\Vert  \partial_t\underline{\bT}
\Vert_{W^{1,2}(\Oeta)}^2
\Big)
\dt
\lesssim
T_*
\Vert \underline{\bT}
\Vert_{X_\eta}^2.
\end{aligned}
\end{equation}
We also note that for $R>0$ large enough and $T_*$ very small (say $T_*<\tfrac{1}{R^2}$), we can use H\"older inequality in time and \eqref{twoVeloOneShell}-\eqref{aprioriFP1}  to obtain
\begin{equation}
\begin{aligned}\label{manyE}
 e^{c\int_{I_*}
\Vert  \bu\Vert_{W^{4,2}(\Ozeta)}  \dt}
&\lesssim
 e^{cT_*^{1/2}\sqrt{\int_{I_*}
\Vert  \bu\Vert_{W^{4,2}(\Ozeta)}^2  \dt}}
\\
&\lesssim
e^{ cT_*^{1/2}\sqrt{ \mathcal{E}(\mathrm{data})
  +
  T_*
\Vert \underline{\bT}
\Vert_{X_\eta}^2}}
\\
&\lesssim
e^{ cT_*^{1/2}\sqrt{c+T_*R^2}}.
\end{aligned}
\end{equation}
If we now combine all of \eqref{tXeta}-\eqref{manyE}, then for $R>0$ large enough and $T_*$ very small (say $T_*<\tfrac{1}{R^2}$),
we can conclude that
 
\begin{align*}
\Vert  \bT \Vert_{X_\eta }^2& 
\lesssim
  \Big(
 1
  +
  T_*
R^2
\Big)  
  \Big(
 c +
  T_*
R^2
\Big)  e^{ cT_*^{1/2}\sqrt{c+T_*R^2}}
\\
& 
\lesssim
  \Big(
 1
  +
1
\Big)  
  \Big(
 c +
1
\Big)  e^{ c R^{-1}\sqrt{c+1}}
\\
& 
\leq c_1 
   e^{ c_2R^{-1}}
\\
& 
\leq R^2 
\end{align*} 
for $R\ln( {R^2}/{c_1})\geq c_2$. Thus,  $\mathtt{T}:X_\eta \rightarrow X_\eta$ maps $B_R$ into $B_R$ as desired in \eqref{balltoball}.
 
Now, for the contraction property, we consider the superset $Y_\eta \supseteq X_\eta$ given by 
\begin{align*} 
Y_\eta&:=  L^{\infty} (I_*;L^2(\Oeta ) ),
\end{align*}
and equipped with its canonical norm $\Vert \cdot\Vert_{Y_\eta}$, and show that
\begin{equation}
\begin{aligned}
\label{contrEst00}
\Vert \mathtt{T}( \underline{\bT}_1-\underline{\bT}_2) \Vert_{Y_{\eta_1} }^2
&\leq
\tfrac{1}{2}
\Vert (   \underline{\bT}_1-\underline{\bT}_2)\Vert_{Y_{\eta_1} }^2
\end{aligned}
\end{equation} 
hold for any pair of strong solutions $  \underline{\bT}_i\in X_{\eta_1} $, $i=1,2$ for the solute subproblem \eqref{tEquAlone}-\eqref{initialCondSolvSubPro}
with dataset $(\bT_0, \bu_i, \eta_i)$, $i=1,2$, respectively. To show \eqref{contrEst00}, we note that since the fluid domain depends on the deformation of the shell, we have to transform one solution, say $\bT_2$, to the domain of $\bT_1$ to get a difference estimate that is well-defined on a single domain. For this, we set  $\overline{\bT}_2=\bT_2\circ \bm{\Psi}_{\eta_2-\eta_1}$, $\overline{\bu}_2=\bu_2\circ \bm{\Psi}_{\eta_2-\eta_1}$  and obtain (see \cite{mensah2023weak} for further details)
 \begin{align*}
\partial_t\overline{\bT}_2
+
\overline{\bu}_2 \cdot\nabx \overline{\bT}_2 
&=
\nabx\overline{\bu}_2\overline{\bT}_2
+
\overline{\bT}_2 (\nabx\overline{\bu}_2)^\top
+
2\mathbb{D}(\nabx\overline{\bu}_2)
-
2 \overline{\bT}_2  
+
\mathbb{H}_{\eta_2-\eta_1}( \overline{\bu}_2,\overline{\bT}_2)
\end{align*}
defined on ${I_*} \times \Omega_{\eta_1}$ where
\begin{align*}
\mathbb{H}_{\eta_2-\eta_1}( \overline{\bu}_2,\overline{\bT}_2)
=&
(1-J_{\eta_2-\eta_1})\partial_t\overline{\bT}_2
-
J_{\eta_2-\eta_1} \nabx \overline{\bT}_2\cdot\partial_t\bm{\Psi}_{\eta_2-\eta_1}^{-1}\circ \bm{\Psi}_{\eta_2-\eta_1}
\\&+
\nabx\overline{\bu}_2(\mathbb{B}_{\eta_2-\eta_1}-\mathbb{I})(\overline{\bT}_2+\mathbb{I})
+
(\overline{\bT}_2+\mathbb{I})(\mathbb{B}_{\eta_2-\eta_1}-\mathbb{I})^\top (\nabx\overline{\bu}_2)^\top
\\&+
\overline{\bu}_2 \cdot\nabx \overline{\bT}_2 (\mathbb{I}-\mathbb{B}_{\eta_2-\eta_1})
+
2
(1-J_{\eta_2-\eta_1})  \overline{\bT}_2.
\end{align*}
We now consider the difference
\begin{align*}
\bT_{12}:=\bT_1-\overline{\bT}_2,
\quad
\bu_{12}=\bu_1-\overline{\bu}_2,  
\quad
\eta_{12}=\eta_1-\eta_2
\end{align*}
and find that $\bT_{12}$ solves
\begin{equation}
\begin{aligned}
\label{diffEquaSigma}
\partial_t \bT_{12}+ \mathbf{u}_1\cdot \nabx \bT_{12}
&=
\nabx \bu_1\bT_{12} + \bT_{12}(\nabx \bu_1)^\top 
+
2\mathbb{D}(\nabx \bu_{12})
- 2\bT_{12}   
\\&
+
\nabx \bu_{12}\overline{\bT}_2 + \overline{\bT}_2(\nabx \bu_{12})^\top 
- \mathbf{u}_{12}\cdot \nabx \overline{\bT}_2 
-
\mathbb{H}_{-\eta_{12}}( \overline{\bu}_2,\overline{\bT}_2)
\end{aligned}
\end{equation}
on ${I_*} \times \Omega_{\eta_1}$ with identically zero initial condition.
If we now test \eqref{diffEquaSigma} with $\bT_{12}$, then for $t\in {I_*}$, we obtain
\begin{equation}
\begin{aligned}
\frac{1}{2}&
\frac{\dd}{\dt}\Vert \bT_{12}\Vert_{L^2(\Omega_{\eta_1})}^2 
+ 
2
\Vert \bT_{12}\Vert_{L^2(\Omega_{\eta_1})}^2
\\&
\leq 
2\int_{\Omega_{\eta_1}}\vert\nabx\bu_1\vert
\vert \bT_{12}\vert^2\dx
+
2\int_{\Omega_{\eta_1}}\vert\nabx\bu_{12}\vert
\vert \bT_{12}\vert \dx
\\&
+
\int_{\Omega_{\eta_1}}
\big[
\nabx \bu_{12}\overline{\bT}_2 + \overline{\bT}_2(\nabx \bu_{12})^\top 
\big]:\bT_{12}\dx
-
\int_{\Omega_{\eta_1}}
\mathbf{u}_{12}\cdot \nabx \overline{\bT}_2
 :\bT_{12} \dx
\\& 
-
\int_{\Omega_{\eta_1}}
\mathbb{H}_{-\eta_{12}}( \overline{\bu}_2,\overline{\bT}_2) :\bT_{12} \dx
\\&
=:  K_1+\ldots+K_5
\end{aligned}
\end{equation} 
Firstly, by using the high regularity of the extra stress tensor and the small acceleration estimate \eqref{est:reg1},
\begin{align*}
\int_0^t K_1\dt'&\lesssim   
\int_0^t \Vert \bu_1\Vert_{W^{3,2}(\Omega_{\eta_1})}\Vert \bT_{12}\Vert_{L^2(\Omega_{\eta_1})}^2
\dt'.
 \end{align*}
Next, we note that
\begin{equation}
\begin{aligned}
\int_0^t
K_2\dt
&\lesssim
\int_0^t
\Vert\nabx\bu_{12}\Vert_{L^2(\Omega_{\eta_1})} 
\Vert \bT_{12}\Vert_{L^2(\Omega_{\eta_1})}\dt'  
\\
&\leq
c(\delta)
\int_0^t
\Vert \nabx\bu_{12}\Vert_{L^2(\Omega_{\eta_1})}^2\dt'
+
\delta
\int_0^t
\Vert  \bT_{12}\Vert_{L^2(\Omega_{\eta_1})}^2\dt' 
\end{aligned}
\end{equation} 
hold for any $\delta>0$ and similarly,
\begin{equation}
\begin{aligned}
\int_0^t
(K_3+K_4)\dt
&\lesssim
\int_0^t
\Vert\nabx\bu_{12}\Vert_{L^2(\Omega_{\eta_1})} 
\Vert \overline{\bT}_{2}\Vert_{L^\infty(\Omega_{\eta_1})}
\Vert \bT_{12}\Vert_{L^2(\Omega_{\eta_1})}\dt'  
\\&\quad+
\int_0^t
\Vert \bu_{12}\Vert_{L^4(\Omega_{\eta_1})} 
\Vert \nabx\overline{\bT}_{2}\Vert_{L^4(\Omega_{\eta_1})}
\Vert \bT_{12}\Vert_{L^2(\Omega_{\eta_1})}\dt'  
\\
&\leq
c(\delta)
\int_0^t
\Vert \nabx\bu_{12}\Vert_{L^2(\Omega_{\eta_1})}^2\Vert \overline{\bT}_{2}\Vert_{W^{2,2}(\Omega_{\eta_1})}^2\dt'
+
\delta
\int_0^t
\Vert  \bT_{12}\Vert_{L^2(\Omega_{\eta_1})}^2\dt' 
\\
&\leq
c(\delta)
\int_0^t
\Vert \nabx\bu_{12}\Vert_{L^2(\Omega_{\eta_1})}^2 \dt'
+
\delta
\int_0^t
\Vert  \bT_{12}\Vert_{L^2(\Omega_{\eta_1})}^2\dt' 
\end{aligned}
\end{equation} 
for any $\delta>0$. To deal with $K_5$ we first expand it as follows
\begin{equation}
\begin{aligned}
K_5 
=&
\int_{\Omega_{\eta_1}}
(1-J_{-\eta_{12}})\partial_t\overline{\bT}_2
 :\bT_{12} \dx
  \\&+2
\int_{\Omega_{\eta_1}}
(1-J_{-\eta_{12}}) \overline{\bT}_2  :\bT_{12} \dx
\\&-
\int_{\Omega_{\eta_1}}
J_{-\eta_{12}} \nabx \overline{\bT}_2\cdot\partial_t\bm{\Psi}_{-\eta_{12}}^{-1}\circ \bm{\Psi}_{-\eta_{12}}
 :\bT_{12} \dx
\\&+
\int_{\Omega_{\eta_1}}
\nabx\overline{\bu}_2(\mathbb{B}_{-\eta_{12}}-\mathbb{I})(\overline{\bT}_2+\mathbb{I})
 :\bT_{12} \dx
 \\&+
\int_{\Omega_{\eta_1}}
(\overline{\bT}_2+\mathbb{I})(\mathbb{B}_{-\eta_{12}}-\mathbb{I})^\top (\nabx\overline{\bu}_2)^\top
:\bT_{12} \dx
 \\&+
\int_{\Omega_{\eta_1}}
\overline{\bu}_2 \cdot\nabx \overline{\bT}_2 (\mathbb{I}-\mathbb{B}_{-\eta_{12}})
 :\bT_{12} \dx
\\=:&K_5^1+\ldots+K_5^6.
\end{aligned}
\end{equation} 
Then we have by interpolation 
\begin{align*}
K_5^1+K_5^2&\lesssim
\Vert  \eta_{12}\Vert_{W^{1,4}(\omega )}
\big(\Vert \partial_t\overline{\bT}_2\Vert_{L^{4}(\Omega_{\eta_1})}
+
\Vert  \overline{\bT}_2\Vert_{L^{4}(\Omega_{\eta_1})}
\big)
\Vert  \bT_{12}\Vert_{L^2(\Omega_{\eta_1})} 
\\
&\leq
\delta 
\Vert  \bT_{12}\Vert_{L^{2}(\Omega_{\eta_1})}^2
+
c(\delta )
\Vert  \eta_{12}\Vert_{W^{2,2}(\omega )}^2
\big(\Vert \partial_t\overline{\bT}_2\Vert_{W^{1,2}(\Omega_{\eta_1})}
+
\Vert  \overline{\bT}_2\Vert_{W^{1,2}(\Omega_{\eta_1})}
\big).
\end{align*} 
 Next, we have
\begin{align*}
K_5^3
&\lesssim
\Vert  \partial_t\eta_{12}\Vert_{W^{1,2}(\omega )}
\Vert \overline{\bT}_2\Vert_{W^{1,\infty}(\Omega_{\eta_1})}
\Vert \bT_{12}\Vert_{L^2(\Omega_{\eta_1})} 
\\
&\leq
\delta 
\Vert \bT_{12}\Vert_{L^2(\Omega_{\eta_1})}^2
+
c(\delta)
\Vert  \partial_t\eta_{12}\Vert_{W^{1,2}(\omega )}^2
\Vert \overline{\bT}_2\Vert_{W^{3,2}(\Omega_{\eta_1})}^2 
\end{align*}
as well as
\begin{align*}
K_5^4 +K_5^5 
&\lesssim  
\Vert \nabx\overline{\bu}_2\Vert_{L^{4}(\Omega_{\eta_1})}
\Vert \eta_{12}\Vert_{W^{1,4}(\omega )}
(\Vert \overline{\bT}_2\Vert_{L^{\infty}(\Omega_{\eta_1})}+1)
\Vert \bT_{12}\Vert_{L^2(\Omega_{\eta_1})} 
\\&
\leq
\delta
\Vert \bT_{12}\Vert_{L^2(\Omega_{\eta_1})}^2
+
c(\delta)
\Vert \eta_{12}\Vert_{W^{2,2}(\omega )}^2
(\Vert \overline{\bT}_2\Vert_{W^{2,2}(\Omega_{\eta_1})}^2
+1).
\end{align*}
Similarly,
\begin{align*}
K_5^6
&\lesssim  
\Vert  \overline{\bu}_2\Vert_{L^{\infty}(\Omega_{\eta_1})}
\Vert \nabx\overline{\bT}_2\Vert_{L^{4}(\Omega_{\eta_1})} 
\Vert \eta_{12}\Vert_{W^{1,4}(\omega )}
\Vert \bT_{12}\Vert_{L^2(\Omega_{\eta_1})} 
\\&
\leq
\delta
\Vert \bT_{12}\Vert_{L^2(\Omega_{\eta_1})}^2
+
c(\delta)
\Vert \eta_{12}\Vert_{W^{2,2}(\omega )}^2
\Vert \overline{\bT}_2\Vert_{W^{2,2}(\Omega_{\eta_1})}^2 .
\end{align*}
Thus,  
\begin{equation}
\begin{aligned}
\int_0^t 
K_5\dt'
\leq& 
\delta
\int_0^t 
\Vert  \bT_{12}\Vert_{L^{2}(\Omega_{\eta_1})}^2\dt'
 +
c
\int_0^t 
\Vert  \partial_{t'}\eta_{12}\Vert_{W^{1,2}(\omega )}^2\Vert  \overline{\bT}_2\Vert_{W^{3,2}(\Omega_{\eta_1})}^2\dt'
\\&+
c\int_0^t 
\Vert  \eta_{12}\Vert_{W^{2,2}(\omega )}^2\big(\Vert \partial_{t'}\overline{\bT}_2\Vert_{W^{1,2}(\Omega_{\eta_1})}^2
+
\Vert  \overline{\bT}_2\Vert_{W^{2,2}(\Omega_{\eta_1})}^2+1
\big)\dt'
\\\leq& 
\delta
\int_0^t 
\Vert  \bT_{12}\Vert_{L^{2}(\Omega_{\eta_1})}^2\dt'
 +
c
\int_0^t 
\Vert  \partial_{t'}\eta_{12}\Vert_{W^{1,2}(\omega )}^2 \dt'
 +
c\int_0^t 
\Vert  \eta_{12}\Vert_{W^{2,2}(\omega )}^2\dt'.
\end{aligned}
\end{equation}  
If we combine the estimates $K_1,\ldots,K_5$ and absorb the $\delta$-terms (by choosing $\delta$ very small) into the left-hand side, 
we obtain
\begin{equation}
\begin{aligned} \nonumber
 \Vert \bT_{12}(t) \Vert_{L^2(\Omega_{\eta_1})}^2
+
\int_0^t 
\Vert \bT_{12}\Vert_{L^2(\Omega_{\eta_1})}^2 \dt'
\lesssim
&\int_0^t \Vert \bu_1\Vert_{W^{3,2}(\Omega_{\eta_1})}\Vert \bT_{12}\Vert_{L^2(\Omega_{\eta_1})}^2
\dt'
+ 
\int_0^t
\Vert\nabx \bu_{12}\Vert_{L^{2}(\Omega_{\eta_1})}^2
\dt'
\\&+ 
\int_0^t
\big(\Vert  \partial_{t'}\eta_{12}\Vert_{W^{1,2}(\omega )}^2
+\Vert  \eta_{12} \Vert_{W^{2,2}(\omega )}^2
\big) \dt'
\end{aligned}
\end{equation} 
for all $t\in {I_*}$ so that by Gr\"onwall's lemma and the fact that $\bu_1$ is a strong solution,
\begin{equation}
\begin{aligned} \label{contrEst0}
\sup_{t\in I_*}\Vert \bT_{12}(t) \Vert_{L^2(\Omega_{\eta_1})}^2 
\lesssim
\int_{I_*}
\big(\Vert\nabx \bu_{12}\Vert_{L^{2}(\Omega_{\eta_1})}^2
+\Vert  \partial_{t}\eta_{12}\Vert_{W^{1,2}(\omega )}^2
+\Vert  \eta_{12} \Vert_{W^{2,2}(\omega )}^2
\big) \dt.
\end{aligned}
\end{equation} 
Now, following from Theorem \ref{thm:BM}, let consider two strong solutions $( \eta_i, \bu_i,  p_i )$, $i=1,2$  of  \eqref{divfreeAlone}--\eqref{interfaceAlone} with data $(\bff, g, \eta_0, \eta_\star, \bu_0, \underline{\bT}_i)$, respectively. For 
\begin{align*}
\underline{\bT}_{12}:=\underline{\bT}_1-\underline{\overline{\bT}}_2,
\quad
\bu_{12}=\bu_1-\overline{\bu}_2,  
\quad
\eta_{12}=\eta_1-\eta_2,
\end{align*}
where $\underline{\overline{\bT}}_2:=\underline{\bT}_2\circ\bm{\Psi}_{\eta_2-\eta_1}$, it follows from \cite[Remark 5.2]{BMSS} (c.f. \eqref{standEnergy}) that
\begin{align*}
\sup_{t\in {I_*}}
\Vert  \eta_{12} \Vert_{W^{2,2}(\omega )}^2
+
\int_{I_*}
\big(
\Vert  \nabx\bu_{12}\Vert_{L^{2}(\Omega_{\eta_1})}^2+
\Vert  \partial_t\eta_{12}\Vert_{W^{1,2}(\omega )}^2\big)
\dt
&\lesssim
\int_{I_*}
\Vert  \underline{\bT}_{12}\Vert_{L^{2}(\Omega_{\eta_1})}^2
\dt
\\&\lesssim
T_*
\Vert  \underline{\bT}_{12}\Vert_{Y_{\eta_1} }^2.
\end{align*}
Inserting into \eqref{contrEst0} then yields
\begin{equation}
\begin{aligned}
\label{contrEst1}
\Vert  \bT_{12}\Vert_{Y_{\eta_1}}^2
&\leq 
cT_*
\Vert  \underline{\bT}_{12}\Vert_{ Y_{\eta_1}}^2
.
\end{aligned}
\end{equation} 
If we choose  $T_*<\tfrac{1}{2c}$,
then we conclude from \eqref{contrEst1} that
\begin{equation}
\begin{aligned}
\nonumber
\Vert   \bT_{12} \Vert_{ Y_{\eta_1}}^2
&\leq
\tfrac{1}{2}
\Vert  \underline{\bT}_{12} \Vert_{Y_{\eta_1} }^2.
\end{aligned}
\end{equation} 
The existence of the desired fixed point now follows and finishes the proof of our main result, Theorem \ref{thm:MAIN}.
 
%
%

 \section*{Statements and Declarations} 
\subsection*{Funding}
 This work has been partly supported by Grant ME 6391/1-1 (543675748) by the German
Research Foundation (DFG).
\subsection*{Author Contribution}
The author wrote and reviewed the manuscript.
\subsection*{Conflict of Interest}
The author declares that they have no conflict of interest.
\subsection*{Data Availability Statement}
Data sharing is not applicable to this article as no datasets were generated
or analyzed during the current study.
\subsection*{Competing Interests}
The author have no competing interests to declare that are relevant to the content of this article.

\end{document}